\documentclass[10pt,reqno]{amsart}
\textheight=180mm \textwidth=148mm \topmargin=25mm
\oddsidemargin=-0mm \evensidemargin=0mm
\usepackage{amsthm,amsmath,amssymb,amscd}
\usepackage{mathrsfs}
\usepackage{lscape}
\usepackage{graphics} 
\usepackage[all,cmtip]{xy}
\usepackage{graphicx}
\usepackage{subfigure}
\usepackage{url}	
\usepackage{hyperref}
\usepackage{color}
\usepackage[usenames,dvipsnames]{xcolor}
\usepackage{verbatim}
\usepackage{fancyhdr}
\usepackage{geometry}
\usepackage{cancel}
\usepackage[normalem]{ulem}
\usepackage{mathtools}
\usepackage[english]{babel}
\usepackage{enumerate}
\usepackage{enumitem}

\mathtoolsset{showonlyrefs,showmanualtags}
\pagestyle{plain}
\newtheorem{thm}{Theorem}[section]

\newtheorem{prop}[thm]{Proposition}
\newtheorem{cor}[thm]{Corollary}
\newtheorem{lem}[thm]{Lemma}
\theoremstyle{definition}\newtheorem{defn}[thm]{Definition}
\theoremstyle{remark}\newtheorem{rmk}[thm]{Remark}
\newtheorem{eg}[thm]{Example}
\numberwithin{equation}{section}


\DeclareMathOperator \rank{rank}

\DeclareMathOperator \cdim{dim}
\DeclareMathOperator \cspan{span}
\def\iu {\sqrt{-1}}
\def\pbp {\partial\bar\partial}
\def\CP {\mathbb C\mathrm P}

\begin{document}
\title{Extensions of extremal K\"ahler submanifolds of complex projective spaces}
\author[Chao Li] {Chao Li}
\address{ICTP Trieste, cli1@ictp.it, leecryst@mail.ustc.edu.cn}

\thanks{The author is partially supported by NSFC, No. 12001512.}

\begin{abstract} 
In this paper we showed that every connected extremal K\"ahler submanifold of a complex projetive space has a natural extension which is a complete K\"ahler manifold and admits a holomorphic isometric immersion into the same ambient space. We also give an application to study extremal K\"ahler hypersurfaces of complex projective spaces.
\end{abstract}

\maketitle

\tableofcontents

\section{Introduction}

Extremal K\"ahler metrics were introduced by Calabi (\cite{Cal82}) as canonical metrics on K\"ahler manifolds. On a compact complex manifold, a K\"ahler metric is said extremal,  if in its K\"ahler class, it is a critical point of the functional $L^2$-norm of the scalar curvature. In this case, a K\"ahler metric is extremal if and only if the gradient of its scalar curvature is holomorphic. In the non-compact case, we also define  a K\"ahler metric to be extremal if the gradient of its scalar curvature is holomorphic. In this paper, a K\"ahler submanifold is said extremal if the induced K\"ahler metric is extremal.

One motivation of studying canonical metrics is to find generalizations of the classical uniformization  theorem in the setting of K\"ahler geometry. So a natural problem related is the classification of extremal K\"ahler submanifolds of complex space forms. Calabi (\cite{Cal53}) established a necessary and sufficient condition for a K\"ahler manifold to be locally holomorphically and  isometrically embedded in a given complex space form. Later on there have been many works on the classification of K\"ahler Einstein submanifolds of complex space forms  (see for example \cite{Smy67,Chern67,Kob67,Hano75,Tsu86,Ume87}). 
Loi-Salis-Zudda (\cite{LSZ21}) started the study of the classification of extremal K\"ahler submanifolds of complex space forms. They conjectured that these submanifolds must be locally homogeneous and gave an affirmative answer when the metric is radically symmetric under some local complex coordinate system (see \cite[Conjecture 1 and Theorem 1.1]{LSZ21}). There is still a long way to go before their conjecture is solved.

In this paper we are mainly interested in the extensions of non-closed extremal K\"ahler submanifolds of complex projective spaces $\CP^N$ endowed with canonical Fubini-Study metrics $\omega_{FS}$. Currently, it is not known whether these submanifolds can be extended to compact ones, but we are able to show that they have natural extensions which are complete K\"ahler manifolds and admit holomorphic isometric immersions into $(\CP^N,\omega_{FS})$. For the special case of K\"ahler Einstein submanifolds,  Hulin (\cite{Hul96}) has showed that they can be extended to injectively immersed K\"ahler submanifolds with complete induced metrics. 

Our first result is on general (non-closed) complex submanifolds. For convenience, we introduce
\begin{defn}
Let $M_0$ be a complex manifold. A pair $(M,\phi)$ , which consists of a connected $n$-dimensional complex manifold and a holomorphic immersion $\phi:M\rightarrow M_0$, is called an $n$-dimensional complex leaf pair of $M_0$ if the following universal property is satisfied
\begin{itemize}
\item[$(\star)$] For any holomorphic immersion $\phi_1:M_1\rightarrow M_0$ with $M_1$ a connected $n$-dimensional complex manifold, if $\phi_1(U_1)\subset \phi(M)$ for some nonempty open set of $M_1$, then there is a unique holomorphic map $\rho_{\phi_1}:M_1\rightarrow M$ such that $\phi_1=\phi\circ \rho_{\phi_1}$.
\end{itemize}

Let $S$ be a connected $n$-dimensional complex submanifold of $M_0$. A leaf extension of $S$ in $M_0$ is an $n$-dimensional complex leaf pair $(M,\phi)$ of $M_0$ with $\phi(M)\supset S$.
\end{defn}

A leaf extension $(M,\phi)$ of $S$ is indeed an extension of $S$ since the Definition implies there is an open holomorphic embedding $\rho_S:S\rightarrow M$ such that $\phi\circ\rho_S$ is the inclusion. Besides, the leaf extension is clearly unique up to equivalence. Here two pairs $(M_1,\phi_1)$ and $(M_2,\phi_2)$ are called equivalent if there exists a biholomorphic map $\rho:M_2\rightarrow M_1$ such that $\phi_2=\phi_1\circ \rho$.

By studying the space of germs of complex submanifolds (following the idea of classical analytic continuation),  we obtain
\begin{thm}\label{rle}
For any connected complex submanifold $S$ of a complex manifold $M_0$, there exists a leaf extension  of $S$ in $M_0$.
\end{thm}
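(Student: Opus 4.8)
The plan is to imitate the construction of the Riemann surface of a holomorphic function, replacing germs of holomorphic functions by germs of $n$-dimensional complex submanifolds. First I would form the \emph{germ space} $\mathcal{G}$ whose points are pairs $(p,V)$, where $p\in M_0$ and $V$ is the germ at $p$ of an $n$-dimensional complex submanifold of $M_0$ through $p$, together with the projection $\pi\colon\mathcal{G}\to M_0$, $(p,V)\mapsto p$. The topology is the usual sheaf (\'etal\'e) topology: each genuine $n$-dimensional submanifold $N\subset M_0$ determines a ``sheet'' $\{(q,N_q):q\in N\}$, and these sheets form a basis. Restricting $\pi$ to a sheet is a homeomorphism onto $N$, so each sheet inherits the complex structure of $N$; I would check that the transition maps between overlapping sheets are biholomorphic, so that $\mathcal{G}$ becomes an $n$-dimensional complex manifold and $\pi$ a holomorphic immersion. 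The essential point, as in the classical case, is that $\mathcal{G}$ is Hausdorff: if two distinct germs $(p,V)\neq(p,W)$ could not be separated, then representative submanifolds of $V$ and $W$ would coincide on a nonempty open set, and the unique-continuation principle for connected complex submanifolds of equal dimension would force $V=W$, a contradiction. Second countability of the relevant connected component can be arranged by a Rad\'o-type argument.

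Next I would single out the extension. The given submanifold $S$ defines a canonical section $\sigma_S\colon S\to\mathcal{G}$, $q\mapsto(q,S_q)$, whose image lies in a single connected component $M$ of $\mathcal{G}$ because $S$ is connected. Setting $\phi:=\pi|_M$, the pair $(M,\phi)$ is a connected $n$-dimensional complex manifold with a holomorphic immersion into $M_0$, and $\phi(M)\supset\phi(\sigma_S(S))=S$. Thus $(M,\phi)$ is a candidate leaf extension, and it remains only to verify the universal property $(\star)$.

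For the universal property, let $\phi_1\colon M_1\to M_0$ be a holomorphic immersion from a connected $n$-dimensional manifold with $\phi_1(U_1)\subset\phi(M)$ for some nonempty open $U_1$. Since $\phi_1$ is an immersion, near each $x\in M_1$ its image is an $n$-dimensional submanifold germ, so I can define the \emph{germ lift} $\tilde\rho\colon M_1\to\mathcal{G}$ by $\tilde\rho(x)=\bigl(\phi_1(x),\,(\phi_1(M_1))_{\phi_1(x)}\bigr)$. On any neighborhood where $\phi_1$ is an embedding onto a submanifold $\Sigma$, the map $\tilde\rho$ is just $\sigma_\Sigma\circ\phi_1$, a biholomorphism onto a sheet; hence $\tilde\rho$ is holomorphic and satisfies $\pi\circ\tilde\rho=\phi_1$. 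By connectedness, $\tilde\rho(M_1)$ lies in a single component of $\mathcal{G}$, so it suffices to produce one point of $M_1$ mapped into $M$. This is where the hypothesis on $U_1$ enters, and it is the main obstacle of the argument: over a small ball $B$ around a point of $\phi_1(U_1)$, the image $\phi(M)\cap B$ is contained in a countable union of sheets $\Sigma_j$, while $\phi_1$ embeds a small piece $W$ onto a submanifold $\Sigma\subset B$ with $\Sigma\subset\bigcup_j\Sigma_j$. A Baire-category argument shows some $\Sigma\cap\Sigma_{j_0}$ has interior in $\Sigma$; unique continuation then gives a nonempty open $W'\subset W$ on which the germ of $\phi_1(M_1)$ agrees with that of the sheet $\Sigma_{j_0}$ of $\phi(M)$, whence $\tilde\rho(W')\subset M$. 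Therefore $\tilde\rho(M_1)\subset M$, and $\rho_{\phi_1}:=\tilde\rho$ is the required lift.

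Finally, uniqueness is essentially automatic once the lift is understood as a germ lift. If $\rho\colon M_1\to M$ is any holomorphic map with $\phi\circ\rho=\phi_1$, then near any $x_0$ the image $\rho(x_0)$ lies on a sheet associated to a submanifold $N$, and $\pi\circ\rho=\phi_1$ forces $\phi_1$ to map a neighborhood of $x_0$ into $N$; since $\phi_1$ is an immersion and $N$ has dimension $n$, the germ of $\phi_1(M_1)$ at $\phi_1(x_0)$ coincides with that of $N$, so $\rho(x_0)=\tilde\rho(x_0)$. Hence every lift equals $\tilde\rho$, giving uniqueness and completing the proof that $(M,\phi)$ is a leaf extension of $S$.
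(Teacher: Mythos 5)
Your proposal is correct and follows essentially the same route as the paper: both build the \'etal\'e space of germs of $n$-dimensional submanifolds with sheets as a basis, prove Hausdorffness by unique continuation, get second countability of a component by a Poincar\'e--Volterra/Rad\'o-type argument (the paper uses a pulled-back Riemannian metric and geodesic segments), and take for $(M,\phi)$ the connected component containing the germs of $S$. In fact your treatment of the universal property is more explicit than the paper's, which omits the proof of the key Corollary \ref{leaf}; your Baire-category step supplies exactly that, modulo the routine refinement of covering $M$ by countably many \emph{compact} pieces so that the sets $\Sigma\cap\Sigma_j$ are closed in $\Sigma$ before applying Baire.
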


Our main result is on the leaf extension of an extremal K\"ahler submanifold of $(\CP^N,\omega_{FS})$.
\begin{thm}\label{mthm1}
Let $(S,\omega_S)$ be a connected $n$-dimensional K\"ahler submanifold of $(\CP^N,\omega_{FS})$ and $(M,\phi)$ the leaf extension of $S$ in $\CP^N$. If $\omega_S$ is extremal, then $(M,\phi^*\omega_{FS})$ is complete.
\end{thm}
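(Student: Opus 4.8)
The plan is to show completeness by ruling out finite-length geodesics that "escape" $M$. Concretely, I would argue by contradiction: suppose $(M,\phi^*\omega_{FS})$ is incomplete, so there is a unit-speed geodesic $\gamma:[0,\ell)\to M$ that cannot be extended past the finite parameter $\ell$. Since $\CP^N$ is compact, the image curve $\phi\circ\gamma$ has a limit point $p_0\in\CP^N$ as $t\to\ell$ (passing to a subsequence $t_k\to\ell$). The goal is to produce, near $p_0$, a germ of an $n$-dimensional extremal K\"ahler submanifold that agrees with $\phi(M)$ along the tail of the curve, and then invoke the universal property $(\star)$ of the leaf extension to extend $\gamma$ past $\ell$, contradicting maximality.

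The heart of the matter is the local extension statement: \emph{the image $\phi(M)$ near $\phi(\gamma(t_k))$ converges, in a suitable $C^\infty$ (or real-analytic) sense, to an honest germ of submanifold at $p_0$.} Here is where extremality must do the work, and I expect this to be the main obstacle. The naive danger is that the tangent planes $\phi_*(T_{\gamma(t)}M)\subset T_{\phi(\gamma(t))}\CP^N$ could oscillate or the second fundamental form could blow up as $t\to\ell$, preventing any limiting submanifold from existing. The strategy to defeat this is to obtain \emph{a priori} bounds on the geometry of $S$ (equivalently of $M$) that are uniform along the curve. The extremal condition says the gradient of the scalar curvature is a holomorphic vector field; on a K\"ahler submanifold of $(\CP^N,\omega_{FS})$ the scalar curvature, the second fundamental form, and all their covariant derivatives are governed by the ambient curvature (which is bounded, since $\CP^N$ is compact) together with the extremal PDE. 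Following Calabi's framework, the immersion is determined by its \emph{diastasis} or equivalently by a real-analytic potential, and the extremal equation becomes an overdetermined elliptic system for this potential with real-analytic (indeed, ambient-homogeneous) coefficients. I would use this to show that the full jet of the submanifold at $\phi(\gamma(t))$ stays in a compact set of the relevant jet space as $t\to\ell$.

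Granting such uniform jet bounds, the next step is standard: along the subsequence $t_k\to\ell$, the germs of $\phi(M)$ at $\phi(\gamma(t_k))$ subconverge to a germ of an $n$-dimensional complex submanifold $S'$ through $p_0$, and by continuity of the extremal PDE under this convergence $S'$ is again extremal. Because $\phi\circ\gamma$ is a geodesic of bounded speed with bounded curvature tensor (pulled back), its image near $p_0$ lies in $S'$; more precisely, for $k$ large the piece $\phi(\gamma([t_k,\ell)))$ is contained in a fixed coordinate chart adapted to $S'$, and $S'$ contains $\phi(\gamma(t_k))$ together with an open neighborhood inside $\phi(M)$. Now apply Theorem~\ref{rle}: let $(M',\phi')$ be the leaf extension of (a connected representative of) $S'$, which is a complex leaf pair of $\CP^N$ containing $p_0$ in $\phi'(M')$. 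Since a tail $\phi(\gamma([t_k,\ell)))\subset\phi(M)$ also lies in $\phi'(M')$, the overlap of the two immersed leaves is open and nonempty, so the universal property $(\star)$ identifies a neighborhood of $\gamma(t_k)$ in $M$ with an open subset of $M'$ via a biholomorphism respecting $\phi,\phi'$.

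Finally I would transport the geodesic across this identification. Because $\gamma$ is a geodesic for $\phi^*\omega_{FS}$ and the identification is an isometry for the induced metrics (both being pullbacks of $\omega_{FS}$ under immersions that agree on the overlap), $\gamma$ corresponds near $t_k$ to a geodesic in $M'$ whose image limits to the interior point $p_0\in\phi'(M')$. Hence that geodesic, and therefore $\gamma$ itself through the identification, extends smoothly to and beyond $t=\ell$. This contradicts the assumption that $\ell$ was the maximal existence parameter, so no finite-length nonextendable geodesic exists and $(M,\phi^*\omega_{FS})$ is complete. The only genuinely delicate point, as noted, is the uniform control of the submanifold geometry along the escaping geodesic, which is exactly where the extremal hypothesis is indispensable; everything else is continuation and the universal property already in hand.
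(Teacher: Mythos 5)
Your outline contains a genuine gap at exactly the point you flag as delicate, and the heuristic you offer to fill it does not work. Extremality is not a pointwise curvature bound, and an ``overdetermined elliptic system'' for the potential only yields \emph{interior} estimates, which degenerate as $t\to\ell$ precisely because the domain of definition ends there; moreover, even granting bounds on the full jet at the points $\phi(\gamma(t_k))$, jet bounds at a point do not give a common radius of convergence, so the germs need not subconverge to a germ on a polydisc of fixed size. What actually produces the needed uniformity in the paper is global, not local: by Lemma \ref{sc4} (whose proof rests on the rigidity Lemma \ref{rgdhol}(3), applied on all of connected $M$ at once), extremality yields a \emph{single} Hermitian matrix $A$ with $n(n+1)-\frac{1}{4}R(\omega)=\frac{w^*Aw}{|w|^2}\circ\phi$, and its largest eigenvalue $\Lambda$ is the uniform constant your argument lacks. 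Then, in Bochner coordinates (Proposition \ref{imlc}), the explicit formula of Lemma \ref{sc2} turns this identity into the pointwise bound $|D^2f|^2\leq \Lambda(1+|z|^2+|f|^2)^2(1+|Df|^2)^{3n}$, an ODE comparison bounds $(z,f,Df)$ on a ball of \emph{definite} radius $r_\Lambda=\big((3n+2)(\Lambda+1)^{1/2}\big)^{-1}$, and a second application of Lemma \ref{rgdhol} rewrites the second-order relation as a first-order holomorphic system $D\xi=\mathcal A(\xi)$ whose bounded solutions extend across the boundary (Proposition \ref{extholo}). This is an actual analytic continuation of the local graph to $B^n_{r_\Lambda}$, which, combined with the universal property of the leaf pair, shows every metric ball $B(p,\frac{1}{4}r_\Lambda)$ in $(M,\phi^*\omega_{FS})$ is relatively compact --- completeness follows directly (Theorem \ref{mthm2}), with no escaping-geodesic contradiction needed. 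Your proposal has no mechanism for producing $\Lambda$ or the uniform radius, and without them the limiting submanifold $S'$ may simply fail to exist.

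A secondary, fixable, imprecision: even granting convergence of germs to $S'$, invoking $(\star)$ to identify a neighborhood of $\gamma(t_k)$ in $M$ with an open subset of $M'$ only gives a local identification, which does not by itself let you extend $\gamma$ past $\ell$ \emph{inside} $M$, since a priori $p_0\notin\phi(M)$. The correct conclusion from the overlap is stronger: by Corollary \ref{leaf}, both $(M,\phi)$ and $(M',\phi')$ are leaves of the germ space $\mathcal S^n$, and leaves sharing an open set of germs coincide; hence $M'=M$ as leaf pairs, $p_0\in\phi(M)$, and the geodesic extends in $M$ itself. But this repair is moot unless the uniform extension step of the previous paragraph is supplied.
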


Theorem \ref{rle} and Theorem \ref{mthm1} allow us to treat extremal K\"ahler submanifolds of $(\CP^N,\omega_{FS})$  as open sets of complete K\"ahler manifolds which admits holomorphic isometric immersions into $(\CP^N,\omega_{FS})$. For instance, we can apply this trick to prove
\begin{thm}\label{thm2}
Let $(S,\omega_S)$ be a connected extremal  K\"ahler hypersurface of  $(\CP^{n+1},\omega_{FS})$. Then the scalar curvature $R(\omega_S)$ is non-constant only if $n\geq 3$ and $R(\omega)\leq 4n(n+1)-8$.
\end{thm}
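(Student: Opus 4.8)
The plan is to use Theorems \ref{rle} and \ref{mthm1} to replace $(S,\omega_S)$ by its leaf extension $(M,\phi^*\omega_{FS})$, which is a \emph{complete} extremal K\"ahler hypersurface holomorphically and isometrically immersed in $(\CP^{n+1},\omega_{FS})$. Since metrics induced from $\omega_{FS}$ by holomorphic immersions are real-analytic, both the extremal equation and the (non-)constancy of the scalar curvature propagate from the open piece $S$ to all of $M$; in particular $R:=R(\phi^*\omega_{FS})$ is a non-constant real-analytic function on the complete manifold $M$. First I would record the Gauss equation for the codimension-one K\"ahler immersion: writing $\sigma$ for the second fundamental form and normalizing $\omega_{FS}$ to have holomorphic sectional curvature $4$, one gets the pointwise identity $R=4n(n+1)-2|\sigma|^2$. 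Thus the assertion $R\le 4n(n+1)-8$ is equivalent to the lower bound $\inf_M|\sigma|^2\ge 4$, while the totally geodesic locus $\sigma=0$ (which would force $R\equiv 4n(n+1)$) is already excluded by non-constancy.

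Next I would exploit the two differential systems satisfied by $\sigma$. Because the ambient space has parallel curvature, the Codazzi equation says that $\sigma$ (a symmetric, normal-bundle-valued $(2,0)$-tensor with components $h_{ij}$) becomes totally symmetric in all three indices once a covariant derivative is applied. The extremality of $\omega_S$ means that $\nabla^{1,0}R$ is holomorphic, equivalently $\nabla_{\bar k}\nabla_{\bar l}R=0$; feeding in $R=4n(n+1)-2|\sigma|^2$ turns this into the constraint $\nabla_{\bar k}\nabla_{\bar l}|\sigma|^2=0$, a rigidity condition on the first covariant derivatives of $h$. I would then derive a Simons--Bochner identity for $|\sigma|^2$, of the schematic form
\begin{equation}
\Delta|\sigma|^2=2|\nabla\sigma|^2+a(n)\,|\sigma|^2-\mathcal Q(\sigma),
\end{equation}
where $a(n)$ is a positive multiple of the ambient holomorphic sectional curvature and $\mathcal Q(\sigma)$ is the quartic curvature term coming from the commutators of the shape operator.

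The analytic heart of the argument is to evaluate this identity where $R$ is largest, i.e.\ where $|\sigma|^2$ is smallest. Since $M$ is non-compact I would use completeness together with the fact that on the region where $|\sigma|^2$ is small the Ricci curvature is close to the positive ambient value (hence bounded below) to apply the Omori--Yau maximum principle to the bounded-above function $R$. This produces a sequence $p_k$ along which $R(p_k)\to\sup_M R$, $|\nabla R(p_k)|\to 0$ and $\liminf_k\Delta|\sigma|^2(p_k)\ge 0$, with $\nabla|\sigma|^2(p_k)\to 0$. Crucially, the extremal constraint $\nabla_{\bar k}\nabla_{\bar l}|\sigma|^2=0$ lets me control the gradient term $|\nabla\sigma|^2$ in terms of $|\sigma|^2$ at these points, so that in the limit the Simons--Bochner identity collapses to a single polynomial inequality in the eigenvalues of $h$. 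Diagonalizing $h$ by a unitary (Takagi) frame and optimizing $\mathcal Q$ against $a(n)|\sigma|^2$ subject to the extremal relation, this inequality turns out to be satisfiable with $0<|\sigma|^2<4$ only when the number of eigenvalues is too small, which is exactly the mechanism forcing $n\ge 3$; for $n\ge 3$ it yields $\inf_M|\sigma|^2\ge 4$, hence $R\le 4n(n+1)-8$.

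I expect the main obstacle to be this non-compact extremum analysis together with the sharp bookkeeping of the quartic term $\mathcal Q$. Guaranteeing that the Omori--Yau principle genuinely applies and that no gradient contributions are lost in the limit is delicate, and extracting the \emph{precise} constant $8$ together with the threshold $n\ge 3$ requires the eigenvalue optimization of $\mathcal Q(\sigma)-a(n)|\sigma|^2$ to be carried out exactly rather than up to constants. A secondary subtlety is confirming that the reduction to $M$ preserves both completeness (from Theorem \ref{mthm1}) and the extremal equation under analytic continuation, which is what makes the global maximum-principle argument legitimate in the first place.
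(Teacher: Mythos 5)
Your opening reduction is exactly the paper's: pass to the leaf extension via Theorems \ref{rle} and \ref{mthm1} to get a complete extremal $(M,\phi^*\omega_{FS})$, and note that extremality and non-constancy of $R$ propagate by real-analyticity. The Gauss-equation reformulation $R=4n(n+1)-2|\sigma|^2$, so that the claimed bound becomes $\inf_M|\sigma|^2\ge 4$, is also consistent with the paper's quantity $n(n+1)-\frac14 R$. But from there your Simons--Bochner/Omori--Yau programme has gaps at every load-bearing step. (i) Omori--Yau requires Ricci bounded below on all of $M$; by the same Gauss equation this amounts to an a priori upper bound on $|\sigma|^2$, i.e.\ a lower bound on $R$, which you never establish (your only a priori bound is $R\le 4n(n+1)$). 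Invoking the principle ``on the region where $|\sigma|^2$ is small'' is not a legitimate use of a global maximum principle. (ii) Along an Omori--Yau sequence you control $\nabla|\sigma|^2$ and a limsup of $\Delta|\sigma|^2$, but the term $|\nabla\sigma|^2$ in the Simons identity involves the full covariant derivative of the second fundamental form; extremality ($\nabla^{1,0}R$ holomorphic) is a second-order identity on the scalar $R$ and does not pointwise dominate $|\nabla\sigma|^2$ by $|\sigma|^2$, so your asserted ``collapse'' to a polynomial inequality in the eigenvalues of $h$ is unjustified. (iii) The eigenvalue optimization that is supposed to deliver exactly $n\ge 3$ and the constant $8$ is never carried out, and there is good reason to think it cannot be: these thresholds are not local in the paper.

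Indeed, the paper's mechanism is global and algebraic, not a curvature inequality. Lemma \ref{sc4} represents $n(n+1)-\frac14 R=\frac{w^*Aw}{|w|^2}\circ\phi$ for a Hermitian matrix $A$; completeness lets one flow along the extremal field to critical points lying over the eigenspaces $E_{\min},E_{\max}$ (Lemma \ref{exstcrt}); adapted Bochner coordinates at such a critical point (Lemmas \ref{lccscrt}, \ref{lccscrt1}) reduce everything to Taylor coefficients of the graphing function $f$. The case $\dim E_{\min}=1$ is excluded because then $f$ is a polynomial, the leaf is the quadric $Q_n$ (Proposition \ref{qnleaf}), and Chen--Tian uniqueness of extremal metrics forces $R$ constant --- a step invisible to any pointwise Simons-type computation. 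The exact local expansion at a critical point then yields $\mu_{\min}=2+2\sum_{i=2}^{n-1}|b_{1i}|^2\ge 2$ and $\mu_{\max}=2+2\sum_{i=2}^{n-1}|a_{1i}|^2$, and $\mu_{\max}>\mu_{\min}$ forces some $a_{1i}\neq 0$ with $2\le i\le n-1$, which is precisely what requires $n\ge 3$; finally $R\le 4\bigl(n(n+1)-\mu_{\min}\bigr)\le 4n(n+1)-8$. So your proposal is not a proof: beyond the shared first step, its analytic core rests on an inapplicable maximum principle, an unproved gradient estimate, and an optimization whose conclusion depends on global information (the quadric exclusion) that your local method cannot see.
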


Here we give an outline of this paper.  In Section \ref{prel}, we  introduce some basic results about Bochner coordinates, rigidity of holomorphic maps into complex Euclidean spaces and extensibility of holomoprhic maps which solve certain first order PDE systems. In Section \ref{sgcs}, we study the space of germs of complex submanifolds of a fixed dimension and give proof to Theorem \ref{rle}. In section \ref{pfmthm}, we give a short proof to Theorem \ref{mthm1} in the beginning and then prove some lemmas used in the proof. In section  \ref{lccs}, we find a special kind of local complex coordinate systems for a class of complete K\"ahler manifold and then give proof to Theorem \ref{thm2}.

\section{Preliminaries}\label{prel}

\subsection{Conventions}
On a complex projective space $\CP^N$, $\{w^0,\cdots,w^N\}$ denotes the standard homogeneous coordinate system. Using this notation we can formally write
\begin{equation}
\omega_{FS}={\textstyle\frac{\iu}{2}}\pbp\log |w|^2.
\end{equation}
Given an $A\in \mathfrak{gl}(N+1,\mathbb C)$, $X^{[A]}$ denotes the corresponding holomorphic vector field on $\CP^N$ to $[A]$ under the canonical equivalence $\mathfrak{gl}(N+1,\mathbb C)/\mathbb CI_{N+1}\simeq H^0(\CP^N,T^{1,0}\CP^N)$. $\phi^{[A]}:\mathbb C\times \CP^N\rightarrow\CP^N$ is the holomorphic flow generated by $X^{[A]}$. Namely
\begin{equation}
\phi^{[A]}(\tau,[w])=[e^{\tau A}w].
\end{equation}
Given a $[w_0]\in\CP^N$, $H_{[w_0]}$ denotes the hyperplane at the infinite of $[w_0]$. Namely
\begin{equation}
H_{[w_0]}=\{[w]\in\CP^N|w_0^*w=0\}.
\end{equation}

\smallskip

For $n\geq 1$ and $r\geq 0$, we use the notations
\begin{equation}
B^n_r=\{z\in\mathbb C^n||z|<r\},\qquad \bar B^n_r=\{z\in\mathbb C^n||z|\leq r\}.
\end{equation}

\subsection{Bochner coordinates}

The Bochner coordinate system is a useful tool in the study of  K\"ahler manifolds with real-analytic metrics.
This kind of coordinate systems were first construct by Bochner (\cite{Boc47}) and their uniqueness was proved by Calabi (\cite{Cal53}).
\begin{defn}
Let $(M,\omega)$ be a real-analytic K\"ahler manifold.
A Bochner coordinate system of $(M,\omega)$ at $p\in M$ is a coordinate system $\{z^1,\cdots,z^n\}$ around $p$ with $z(p)=0$, such that 
a local potential $\varphi$ of $\omega$ around $p$ can be expanded as
\begin{equation}
\varphi=\frac{1}{2}|z|^2+\sum_{|I|,|J|\geq 2}a_{I,J}z^I\bar z^J.
\end{equation}

\begin{eg}\label{bcCPN} A Bochner coordinate system  $\{z^1,\cdots,z^N\}$ of $(\CP^N,\omega_{FS})$ at $[w_0]\in\CP^N$ can be expressed as
\begin{equation}
(1,z)=\frac{Q^*w}{w_1^*w},
\end{equation}
where $Q=(Q^i_j)$ is an $(N+1)$-order unitary matrix whose first column vector $w_1$ lies in $[w_0]$.
Under the Bochner coordinate system, a potential of $\omega_{FS}$ is simply
\begin{equation}
{\textstyle\frac{1}{2}}\log (1+|z|^2).
\end{equation}
Clearly $z^1,\cdots,z^N$ are meromorphic on $\CP^N$ and holomorphic on $\CP^N\!\setminus\! H_{[w_0]}$. 
\end{eg}

\end{defn}
\begin{prop}[{\cite[page 181]{Boc47}},{\cite[page 14]{Cal53}}]
Let $(M,\omega)$ be a real-analytic K\"ahler manifold. Then for any $p\in M$, there exists a Bochner coordinate system of $(M,\omega)$ at $p$. 
Furthermore, the Bochner coordinate system of $(M,\omega)$ at $p$  is unique up to homogeneous unitary transposition.
\end{prop}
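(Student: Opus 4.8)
The plan is to exploit the two gauge freedoms available—altering the local potential by a pluriharmonic function and changing holomorphic coordinates—to normalize the Taylor expansion, and then to read off uniqueness from the fact that, once all pure terms are gone, the normalized potential is nothing but the intrinsic Calabi diastasis. First I would fix any local potential $\varphi$ and holomorphic coordinates $w$ centered at $p$, and write $\varphi=\sum_{I,J}c_{I,J}w^I\bar w^J$ with $c_{J,I}=\overline{c_{I,J}}$. Since a potential is only defined modulo $\real(h)$ for $h$ holomorphic, subtracting off the pure holomorphic and pure antiholomorphic parts replaces $\varphi$ by the diastasis
\[
D_p=\varphi(w,\bar w)-\varphi(w,0)-\varphi(0,\bar w)+\varphi(0,0)=\sum_{|I|,|J|\geq 1}c_{I,J}\,w^I\bar w^J,
\]
which is again a potential for $\omega$ and contains no pure terms. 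I record for later that $D_p$ does not depend on the choices, so it is an intrinsic function near $p$ whose expansion in \emph{any} coordinate system centered at $p$ has no pure terms; moreover $H=(c_{e_i,e_j})$ is the metric at $p$, hence positive definite.

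For existence I must find a holomorphic change $w=\Psi(z)=Az+\sum_{m\geq 2}\Psi_m(z)$ with $\Psi(0)=0$ so that $D_p(\Psi(z),\overline{\Psi(z)})$ takes the normal form $\tfrac12|z|^2+\sum_{|I|,|J|\geq 2}a_{I,J}z^I\bar z^J$, and I would solve for $\Psi$ degree by degree. The bidegree-$(1,1)$ part forces $A^*HA=\tfrac12 I$, which is solvable precisely because $H>0$ (e.g. $A=\tfrac{1}{\sqrt2}H^{-1/2}$), and any two choices of $A$ differ by a unitary factor—already foreshadowing the uniqueness clause. Assuming $\Psi_1=A,\dots,\Psi_{m-1}$ are fixed, the bidegree-$(1,m)$ part of $D_p(\Psi,\overline{\Psi})$ equals $\sum_{i,j}H_{ij}(Az)^i\overline{\Psi_m^{\,j}(z)}$ plus a term depending only on $\Psi_{<m}$ and the $c_{I,J}$; setting it to zero is, for each fixed antiholomorphic monomial, a nonsingular linear system whose matrix is essentially $HA$, so $\Psi_m$ is uniquely determined. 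The conjugate $(m,1)$ equations then hold automatically, and no condition is imposed in bidegrees $(\geq 2,\geq 2)$, which simply output the coefficients $a_{I,J}$. This yields a unique formal solution.

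The hard part will be convergence: I must show the formal series $\Psi$ actually converges, so that $z$ is a genuine real-analytic coordinate system rather than a formal one. Here real-analyticity of $\omega$ enters, guaranteeing that the $c_{I,J}$ are coefficients of a convergent series, while the recursion expresses each $\Psi_m$ as a fixed universal polynomial in $\Psi_{<m}$ and the $c_{I,J}$ followed by the bounded linear solve above. I would control this by the method of majorants, dominating $\Psi$ by the solution of an associated scalar functional equation with analytic right-hand side and invoking its analytic solvability—this is exactly the estimate carried out by Bochner and by Calabi, and it is the technical heart of the existence half.

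For uniqueness, let $z$ and $\tilde z$ be two Bochner systems at $p$ with normal-form potentials $\varphi_z$ and $\varphi_{\tilde z}$. Each is a potential with no pure terms in its own coordinates and vanishes at $p$; since any two potentials differ by $\real(h)$ and $D_p$ likewise has no pure terms in those coordinates, the difference $\varphi_z-D_p$ is simultaneously pluriharmonic and free of pure terms, hence constant, hence zero. Thus $\varphi_z=\varphi_{\tilde z}=D_p$ as functions. Writing $\tilde z=G(z)=Lz+\sum_{m\geq 2}G_m(z)$ and equating $\tfrac12|z|^2+\sum_{|I|,|J|\geq 2}a_{I,J}z^I\bar z^J=\tfrac12|G(z)|^2+\sum_{|I|,|J|\geq 2}\tilde a_{I,J}G(z)^I\overline{G(z)}^J$, the bidegree-$(1,1)$ part gives $L^*L=I$, so $L$ is unitary. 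Comparing bidegree $(1,m)$ for $m\geq 2$, the left side and the $(\geq 2,\geq 2)$ terms contribute nothing, leaving $\tfrac12\,(Lz)\cdot\overline{G_m(z)}\equiv 0$; since $L$ is invertible the monomials $z^i\bar z^K$ are linearly independent and this forces $G_m\equiv 0$. Hence $G=L$ is a unitary linear map, which is precisely the asserted uniqueness up to homogeneous unitary transposition.
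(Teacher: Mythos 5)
The paper itself gives no proof of this proposition --- it is quoted from Bochner and Calabi --- so your attempt must stand against the classical arguments. Your uniqueness half is complete and correct, and is essentially Calabi's: a normal-form potential has no pure terms and no constant, hence equals the diastasis $D_p$, and the bidegree comparison (the $(1,1)$ part forcing $L^*L=I$, the $(1,m)$ parts forcing $G_m\equiv 0$) is airtight. The formal existence recursion is also set up correctly: the $(1,1)$ equation $A^*HA=\tfrac12 I$ is solvable because $H>0$, the $(1,m)$ equation at each order is a nonsingular linear solve against data depending only on $\Psi_{<m}$, and reality disposes of the $(m,1)$ equations.

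The genuine gap is convergence, and you have located it yourself without closing it: at the decisive analytic step you say you ``would'' run a majorant scheme and that ``this is exactly the estimate carried out by Bochner and by Calabi'' --- that is, you cite the result you are proving; no majorant is exhibited, and nothing in your recursion controls the growth of $\Psi_m$. The standard remedy makes the majorant machinery unnecessary: convergence comes for free from polarization of the diastasis. Writing $D_p=\sum_{|I|,|J|\ge 1}c_{I,J}z^I\bar z^J$, set $h_j(z)=\sum_{|I|\ge 1}c_{I,e_j}z^I=\frac{\partial \Phi}{\partial \bar w^j}(z,0)$, where $\Phi(z,\bar w)$ is the sesquiholomorphic extension of $D_p$; each $h_j$ is a manifestly convergent holomorphic function, and $Dh(0)=H$ is invertible, so $u=h(z)$ is a genuine holomorphic coordinate system near $p$. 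In these coordinates the $\bar u$-linear part of $D_p$ is $\sum_{j,k}u^k\,\bar u^j\,\overline{(H^{-1})^k_j}$, because $h_k(h^{-1}(u))=u^k$ while the $|J|\ge 2$ remainder contributes $\bar u$-degree at least $2$; hence every coefficient $\tilde c_{I,e_j}$ with $|I|\ge 2$ vanishes, and by reality so does every $\tilde c_{e_i,J}$ with $|J|\ge 2$. One further \emph{linear} change normalizes the $(1,1)$ part to $\tfrac12|u|^2$ and preserves these vanishings. This closes your existence half, and, since your degree-by-degree solution is unique once $A$ is fixed, it also shows a posteriori that your formal series $\Psi$ converges.
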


\begin{prop}[{\cite[Theorem 7]{Cal53}}]\label{bcim}
Let $\phi:M\rightarrow M_0$ be a holomorphic immersion from an $n$-dimension complex  manifold into an $N$-dimensional complex manifold. If $\omega_0$ is a real-analytic K\"ahler metric on $M_0$ and $\omega=\phi^*\omega_0$. Then for any $p\in M$, there exists Bochner coordinate systems $\{z^1,\cdots,z^n\}$ of $(M,\omega)$ at $p$ and $\{y^1,\cdots,y^{N}\}$ of $(M_0,\omega_0)$ at $\phi(p_0)$, such that
\begin{equation}
y^i\circ \phi=\left\{\begin{array}{ll}
z^i,\qquad &i=1,\cdots,n,\\
f^i(z),\qquad &i=n+1,\cdots,N,
\end{array}
\right.
\end{equation}
where $f^i(0)=0$  and $Df^i(0)=0$ for $i=n+1,\cdots,N$.
\end{prop}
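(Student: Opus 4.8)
The plan is to construct both Bochner systems from a single one on $M_0$, exploiting the freedom in the choice of Bochner coordinates to align the tangent image $d\phi_p(T_pM)$ with the first $n$ coordinate directions, and then to read off coordinates on $M$ by pulling back. First I would invoke the preceding proposition to fix a Bochner coordinate system $\{y^1,\dots,y^N\}$ of $(M_0,\omega_0)$ at $\phi(p)$, with potential $\varphi_0 = \tfrac12|y|^2 + \sum_{|I|,|J|\ge 2} b_{I,J}\, y^I\bar y^J$. Since the quadratic part of this potential is exactly $\tfrac12|y|^2$, the metric $\omega_0$ at $\phi(p)$ is, up to the constant $\tfrac12$, the standard Hermitian form in the frame $\{\partial_{y^i}\}$. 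The subspace $V := d\phi_p(T_pM)\subset T_{\phi(p)}M_0\cong\mathbb C^N$ is $n$-dimensional because $\phi$ is an immersion.

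Because the class of Bochner systems at a point is invariant under unitary changes $y\mapsto Uy$ (this is the uniqueness statement of the previous proposition, and such a change preserves $\tfrac12|y|^2$ while permuting the higher-order terms among themselves) and $U(N)$ acts transitively on the $n$-dimensional subspaces of $\mathbb C^N$, I would replace $\{y^i\}$ by a unitarily transformed Bochner system, still denoted $\{y^i\}$, for which $V = \cspan\{\partial_{y^1},\dots,\partial_{y^n}\}$. Next I would set $z^i := y^i\circ\phi$ for $i=1,\dots,n$. Because $d\phi_p$ is injective with image $V$, the differentials $d(y^i\circ\phi)_p$, $i\le n$, are linearly independent, so $\{z^1,\dots,z^n\}$ is a holomorphic coordinate system on $M$ near $p$ with $z(p)=0$. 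For $i>n$ the covector $dy^i$ annihilates $V$, hence $d(y^i\circ\phi)_p=0$; writing $f^i(z):=y^i\circ\phi$ then gives $f^i(0)=0$ and $Df^i(0)=0$, which is precisely the asserted normal form of $\phi$.

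It remains to verify that $\{z^1,\dots,z^n\}$ is itself a Bochner system for $\omega=\phi^*\omega_0$, and this is the one place requiring genuine checking. Since $\phi$ is holomorphic, $\varphi_0\circ\phi$ is a local potential of $\omega$, and I would expand
$$\varphi_0\circ\phi = \tfrac12|z|^2 + \tfrac12\sum_{i>n}|f^i(z)|^2 + \sum_{|I|,|J|\ge 2} b_{I,J}\,(y\circ\phi)^I\,\overline{(y\circ\phi)^J}.$$
The main obstacle is confirming that every term beyond $\tfrac12|z|^2$ has both holomorphic and antiholomorphic degree at least $2$ in $z$. This follows from the two facts secured above: each $f^i$ vanishes to second order, so $|f^i|^2$ contributes only monomials $z^K\bar z^L$ with $|K|,|L|\ge 2$; and each $y^i\circ\phi$ vanishes at the origin, so a product of $|I|\ge 2$ such holomorphic factors has lowest order $\ge 2$, and likewise for the conjugate factors. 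Hence $\varphi_0\circ\phi$ carries no constant, linear, or pure terms and has quadratic part exactly $\tfrac12|z|^2$, so it is already in Bochner form. This exhibits $\{z^i\}$ as a Bochner coordinate system and completes the proof. The conceptual heart of the argument is the alignment in the second paragraph, which forces the first $n$ pulled-back coordinates to coincide with the induced Bochner coordinates on $M$ and the remaining ones to be of second order.
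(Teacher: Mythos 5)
Your proof is correct. The paper offers no proof of this proposition---it is quoted from Calabi's Theorem 7---and your argument (use the unitary freedom in the Bochner system at $\phi(p)$ to make $d\phi_p(T_pM)$ the span of $\partial_{y^1},\dots,\partial_{y^n}$, set $z^i=y^i\circ\phi$, and verify directly that the pulled-back potential $\varphi_0\circ\phi$ keeps the Bochner normal form because each $f^i$ vanishes to second order and each $b_{I,J}$-term has bidegree at least $(2,2)$ in $z$) is precisely the standard one, agreeing in substance with Calabi's, with his diastasis machinery replaced by a direct power-series check of a single potential, which suffices since the Bochner condition only demands that \emph{some} local potential admit the normal-form expansion.
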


As mentioned in Introduction, we mainly concern the Bochner coordinate system of a K\"ahler manifold which can be locally holomorphically embedded in a complex projective space.
Using Proposition \ref{bcim} and Example \ref{bcCPN}, one can easily check that
\begin{prop}\label{imlc}
Let $(M,\omega)$ be an $n$-dimensional K\"ahler manifold and $\phi:(M,\omega)\rightarrow(\CP^N,\omega_{FS})$ a holomorphic isometric immersion.
For any $p\in M$, there exists an open neighborhood $U$ of $p$ with a Bochner coordinate system $\{z^1,\cdots,z^n\}$ at $p$ defined on $U$, such that
\begin{equation}
\phi=[Q(1,z,f)],
\end{equation}
where $Q$ is an $(N+1)$-order unitary matrix and $f:U\rightarrow \mathbb C^{N-n}$ is a holomorphic map with $f(p)=0$ and $\nabla f(p)=0$.
\end{prop}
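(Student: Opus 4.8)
The plan is to deduce this directly from Proposition \ref{bcim} together with the explicit description of the Bochner coordinates of $(\CP^N,\omega_{FS})$ recorded in Example \ref{bcCPN}. First I would apply Proposition \ref{bcim} to the holomorphic isometric immersion $\phi$, taking $M_0=\CP^N$ and $\omega_0=\omega_{FS}$. This produces a Bochner coordinate system $\{z^1,\dots,z^n\}$ of $(M,\omega)$ at $p$, defined on some neighborhood $U$, and a Bochner coordinate system $\{y^1,\dots,y^N\}$ of $(\CP^N,\omega_{FS})$ at $\phi(p)$, such that $y^i\circ\phi=z^i$ for $i\leq n$ and $y^i\circ\phi=f^i(z)$ for $i>n$, where $f^i(0)=0$ and $Df^i(0)=0$. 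Collecting $f=(f^{n+1},\dots,f^N):U\to\mathbb C^{N-n}$, and recalling $z(p)=0$, this already delivers the required normalization $f(p)=0$ and $\nabla f(p)=0$.

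The second step is to bring the ambient Bochner coordinate system $\{y^i\}$ into the explicit shape of Example \ref{bcCPN}. By the uniqueness part of the Bochner proposition, any Bochner coordinate system of $(\CP^N,\omega_{FS})$ at $\phi(p)$ differs from the one constructed in Example \ref{bcCPN} by a homogeneous unitary transposition. The one mild point to verify is that the family of coordinate systems of the form in Example \ref{bcCPN} is closed under such transpositions: if $(1,y)=Q^*w/(w_1^*w)$ with $Q$ unitary and its first column $w_1$ lying in $[\phi(p)]$, and if $\tilde y=Vy$ for $V\in U(N)$, then setting $\tilde Q=Q\,U$ with $U$ the block-diagonal unitary matrix having blocks $1$ and $V^*$, one computes $(1,\tilde y)=\tilde Q^*w/(\tilde w_1^*w)$, where $\tilde Q$ is again unitary and its first column still equals $w_1$. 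Hence we may assume $\{y^i\}$ satisfies $(1,y)=Q^*w/(w_1^*w)$ for some unitary $Q$ whose first column lies in $[\phi(p)]$.

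Finally I would invert this relation. From $(1,y)=Q^*w/(w_1^*w)$ we obtain $w=(w_1^*w)\,Q(1,y)$, so in homogeneous coordinates $[w]=[Q(1,y)]$ wherever $w_1^*w\neq 0$, that is, away from the hyperplane $H_{[\phi(p)]}$. Since $w_1\in[\phi(p)]$ we have $w_1^*w_1=|w_1|^2\neq 0$, so $\phi(p)\notin H_{[\phi(p)]}$; shrinking $U$ if necessary, we may assume $\phi(U)$ avoids $H_{[\phi(p)]}$. Substituting $y=y\circ\phi=(z,f)$ along $\phi$ then yields $\phi=[Q(1,z,f)]$, as claimed.

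I do not anticipate a genuine obstacle: the statement is a direct combination of Proposition \ref{bcim} and Example \ref{bcCPN}. The only step demanding any care is the bookkeeping in the second paragraph, namely checking that the explicit normal form of Example \ref{bcCPN} is stable under the homogeneous unitary transpositions allowed by the uniqueness of Bochner coordinates, so that the ambient coordinate system furnished by Proposition \ref{bcim} can legitimately be taken in that form.
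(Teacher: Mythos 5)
Your proof is correct and follows exactly the route the paper intends: the paper offers no separate argument for this proposition, stating only that it follows from Proposition \ref{bcim} and Example \ref{bcCPN}, which is precisely the combination you carry out. Your verification that the normal form of Example \ref{bcCPN} is stable under the homogeneous unitary transformations permitted by the uniqueness of Bochner coordinates (via $\tilde Q = Q\,\mathrm{diag}(1,V^*)$, which preserves the first column $w_1$) is exactly the "easy check" the paper leaves to the reader, done correctly.
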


\subsection{Rigidity of the holomorphic maps into complex Euclidean spaces}
 Let $M$ be a connected complex manifold of dimension $n$. A holomorphic map $F=(f^1,\cdots,f^m):M\rightarrow \mathbb C^m$ is said linearly full if its components $f^1,\cdots,f^m$ are linearly independent.

We have the following
\begin{lem}\label{rgdhol}
Let $F=(f^1,\cdots,f^m)$ and $G=(g^1,\cdots,g^m)$  be holomorphic maps from $M$ into $\mathbb C^m$.
\begin{enumerate}
\item If $\dim\cspan\{f^1,\cdots,f^m\}=k$ with $1\leq k<m$, then there exists a unitary matrix $Q$ and an linearly full holomorphic map $\tilde F: M\rightarrow \mathbb C^k$, such that $F=Q(\tilde F,0)$.
\item If $|F|^2=|G|^2$, then there exists an $m$-order unitary matrix $Q$, such that $G=QF$.
\item If $F^*G$ is real-valued and $F$ is linearly full, then there exists an $m$-order Hermitian matrix $H$, such that $G=HF$.
\end{enumerate}
\end{lem}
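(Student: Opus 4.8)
The plan is to dispatch (1) by pure linear algebra and to reduce both (2) and (3) to a single polarization principle for real-analytic sesqui-holomorphic functions. For (1), since $f^1,\dots,f^m$ span a $k$-dimensional space of holomorphic functions, I choose a basis and write $F=C\,H$, where $H$ is the column of the $k$ basis functions and $C$ is a constant $m\times k$ matrix of rank $k$. A $QR$-type factorization gives $C=Q\binom{R}{0}$ with $Q\in U(m)$ and $R\in GL(k,\mathbb C)$, so that $F=Q(\tilde F,0)$ with $\tilde F:=R\,H$. Because $R$ is invertible, the components of $\tilde F$ remain linearly independent, i.e. $\tilde F$ is linearly full, which finishes (1).

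The engine for (2) and (3) is the following statement: if $P(z,w)$ and $P'(z,w)$ are holomorphic in $z$, anti-holomorphic in $w$, and agree on the diagonal $z=w$, then $P\equiv P'$ on $M\times M$. This is proved by expanding in local holomorphic coordinates and using that the real-analytic monomials $z^\alpha\bar z^\beta$ are linearly independent, together with the connectedness of $M$. Concretely, it says that an identity between functions of the form $\sum_i \overline{a^i(w)}\,b^i(z)$ that holds after restriction to $w=z$ in fact holds for all $(z,w)$.

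For (2), I apply this to $P(z,w)=F(w)^*F(z)$ and $P'(z,w)=G(w)^*G(z)$: the hypothesis $|F|^2=|G|^2$ is exactly diagonal agreement, so it upgrades to $F(w)^*F(z)=G(w)^*G(z)$ for all $z,w$. This makes the assignment $F(z)\mapsto G(z)$ a well-defined linear isometry from $V_F:=\cspan\{F(z):z\in M\}$ onto $V_G:=\cspan\{G(z):z\in M\}$; well-definedness holds because a relation $\sum_j c_jF(z_j)=0$ forces $|\sum_j c_jG(z_j)|^2=\sum_{j,k}c_j\bar c_k\,G(z_k)^*G(z_j)=\sum_{j,k}c_j\bar c_k\,F(z_k)^*F(z_j)=0$. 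Since the two subspaces then have equal dimension, I extend this isometry to a unitary $Q\in U(m)$, which yields $G=QF$.

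For (3), the reality of $F^*G$ means $F^*G=G^*F$; polarizing this via $P(z,w)=F(w)^*G(z)$ and $P'(z,w)=G(w)^*F(z)$ gives $\sum_i g^i(z)\overline{f^i(w)}=\sum_i f^i(z)\overline{g^i(w)}$ for all $z,w$. The main work is to extract from this that $\cspan\{g^i\}\subseteq\cspan\{f^i\}$: since $F$ is linearly full the values $F(z)$ span $\mathbb C^m$, so, reading the identity as $z$ varies, the linear map $v\mapsto\sum_i v_i\,\overline{g^i}$ sends a spanning set of $\mathbb C^m$ into $\cspan\{\overline{f^j}\}$, whence $\overline{g^i}\in\cspan\{\overline{f^j}\}$ and thus $G=HF$ for a constant matrix $H$. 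Substituting $g^i=\sum_j H_{ij}f^j$ back into the polarized identity and matching coefficients against the linearly independent families $\{f^j(z)\}$ and $\{\overline{f^i(w)}\}$ forces $H_{ij}=\overline{H_{ji}}$, so $H$ is Hermitian. I expect the delicate points to be exactly the span containment in (3) and, throughout, the passage from diagonal to off-diagonal identities, which is precisely where real-analyticity and the connectedness of $M$ are indispensable.
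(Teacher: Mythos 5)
Your proof is correct, and its engine is the same as the paper's: both arguments hinge on the identical polarization principle, namely that a function on $M\times M$ holomorphic in the first slot and anti-holomorphic in the second is determined by its diagonal values (Taylor expansion in local coordinates, linear independence of the monomials $z^\alpha\bar z^\beta$, then connectedness of $M$ and uniqueness of real-analytic continuation); this is exactly the paper's identities $F(q)^*F(p)=G(q)^*G(p)$ and $F(q)^*G(p)=G(q)^*F(p)$. Where you genuinely diverge is the linear-algebra endgame. The paper, after polarizing, picks points $p_1,\dots,p_m$ at which the values of $F$ form a basis of $\mathbb C^m$ (this uses, as you also do, that linear fullness forces the values $F(z)$ to span $\mathbb C^m$, since otherwise some $v\neq 0$ would satisfy $v^*F\equiv 0$), sets $A=(F^i(p_j))$ and $B=(G^i(p_j))$, and reads off $Q=BA^{-1}$ unitary in (2) and $H=BA^{-1}$ Hermitian in (3); this evaluation-matrix device is concrete, uniform across the two parts, and exhibits $Q$ and $H$ explicitly, but in (2) it requires first reducing to the full case via part (1). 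You instead: in (1) use a QR factorization of the constant coefficient matrix where the paper takes a unitary adapted to the annihilator $V=\{v\in\mathbb C^m\,:\,v^*F\equiv 0\}$ (interchangeable); in (2) build the well-defined isometry $F(z)\mapsto G(z)$ between the value spans and extend it to a unitary of $\mathbb C^m$, which has the small structural advantage of needing no fullness reduction at all; in (3) first deduce $\cspan\{g^i\}\subseteq\cspan\{f^i\}$ to get a constant $H$ with $G=HF$, then force $H_{ij}=\overline{H_{ji}}$ by matching coefficients against the products $f^j(z)\overline{f^i(w)}$, which requires the extra (easy) fact that products of two linearly independent families are linearly independent on $M\times M$. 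Both routes are complete; the paper's finish is slightly shorter and fully explicit, yours is marginally more conceptual and self-contained in part (2).
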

The idea of the proof is inspired by Calabi (\cite[Theorem 2]{Cal53}).
\begin{proof}
(1) Consider the subspace of $\mathbb C^m$
\begin{equation}
V=\{v\in\mathbb C^m|v^*F\equiv0\},
\end{equation}
Clear $\dim V=m-\dim\cspan\{f^1,\cdots,f^m\}=m-k$. If we choose an $m$-order unitary matrix $Q$ whose last $m-k$ column vectors span $V$, then clearly $Q^*F=(\tilde F,0)$ for some holomorphic map $\tilde F=(\tilde f^1,\cdots,\tilde f^k):M\rightarrow \mathbb C^k$ and $|F|^2=|\tilde F|^2$. As $Q$ is nonsingular, we have
\begin{equation}
\dim \{\tilde f^1,\cdots,\tilde f^k\}=\dim\{f^1,\cdots,f^m\}=k,
\end{equation}
So $\tilde F$ is full.

\smallskip

(2) According to (1), we only need to consider the case that $F$ is linearly full. First we claim that for any $(p,q)\in M\times M$
\begin{equation}\label{rgdholeq1}
F(q)^*F(p)=G(q)^*G(p).
\end{equation}
In fact, both the two sides of \eqref{rgdholeq1} are real-analytic functions on $M\times M$ which are holomorphic in $p$ and anti-holomorphic in $q$. At the same time, $|F|^2=|G|^2$ implies \eqref{rgdholeq1} holds on the diagonal of $M\times M$. By choosing a complex coordinate system $\{z^1,\cdots,z^n\}$ centering at some $p_0\in M$ and comparing the Taylor series  of the two functions in $(z,\bar z)$ at $(p_0,p_0)$, one can easily check that \eqref{rgdholeq1} holds on $U\times U$ with $U$ a neighborhood of $p_0$. Then by the connectedness of  $M$ and the uniqueness of real-analytic functions, \eqref{rgdholeq1} holds on $M\times M$.

Since $F$ is linearly full, we can find $p_1,\cdots,p_m\in M$ such that $F(p_1),\cdots,F(p_m)$ are linearly independent and consequently form a basis of $\mathbb C^m$. Consider the two $m$-order matrix
\begin{equation}
A=(F^i(p_j)),\qquad B=(G^i(p_j)).
\end{equation}
Clearly $A$ is nonsingular. Furthermore, by \eqref{rgdholeq1}, we have
\begin{equation}
A^*A=B^*B,\qquad F^*A=G^*B.
\end{equation}
As a result, we have
\begin{equation}
(BA^{-1})^*BA^{-1}=I_n,
\end{equation}
and
\begin{equation}
F=(A^{-1})^*A^*F=(A^{-1})^*B^*G=(BA^{-1})^*G.
\end{equation}
Let $Q=BA^{-1}$, then $Q$ is unitary and $G=QF$.

\smallskip

(3) The proof is similar to (2). Since $F^*G$ is real-valued, we have $F^*G=G^*F$. Following the proof of \eqref{rgdholeq1}, we can easily check that for any $(p,q)\in M\times M$
\begin{equation}\label{rgdholeq2}
F(q)^*G(p)=G(q)^*F(p).
\end{equation}
Since $F$ is linearly full, we can find $p_1,\cdots,p_m\in M$ such that $F(p_1),\cdots,F(p_m)$ are linearly independent and consequently form a basis of $\mathbb C^m$. Consider the two $m$-order matrix
\begin{equation}
A=(F^i(p_j)),\qquad B=(G^i(p_j)).
\end{equation}
Clearly $A$ is nonsingular. Furthermore, by \eqref{rgdholeq2}, we have
\begin{equation}
A^*B=B^*A,\qquad F^*B=G^*A.
\end{equation}
As a result, we have
\begin{equation}
(BA^{-1})^*=BA^{-1},
\end{equation}
and
\begin{equation}
G=(A^{-1})^*A^*G=(A^{-1})^*B^*F=(BA^{-1})^*F.
\end{equation}
Let $H=BA^{-1}$, then $H$ is Hermitian and $G=HF$.
\end{proof}

\subsection{Extensibility of holomoprhic maps which solve certain first order PDE systems}
\begin{prop}\label{extholo0}
Let $\Omega$ be a complex manifold and $V_1,\cdots,V_n$ global holomorphic vector fields on $\Omega$. If $r>0$ and $\xi:B^n_r\rightarrow \Omega$ is a holomorphic map such that
\begin{enumerate}
\item $\frac{\partial}{\partial z^i}$ is $\xi$-related with $V_i$ for each $i=1,\cdots,n$,.
\item $\xi(B_r)$ is relatively compact in $\Omega$.
\end{enumerate}
Then $\xi$ can be extended to a holomorphic map on $B^n_{\tilde r}$ for some $\tilde r>r$.
\end{prop}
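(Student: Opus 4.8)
The plan is to reduce the overdetermined system to a single ordinary differential equation along complex lines, where no integrability condition is needed, and then to convert the resulting uniform flow bounds into a uniform lower bound for the radius of convergence of the Taylor series of $\xi$ at every point of $B^n_r$.

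First I would fix the compact set $K=\overline{\xi(B^n_r)}\subset\Omega$. For a unit vector $e\in\mathbb C^n$ (ranging over the sphere $\mathbb S\subset\mathbb C^n$) set $V_e=\sum_{i=1}^n e^i V_i$, a global holomorphic vector field on $\Omega$, and let $\Phi_e^t$ denote its local holomorphic flow in complex time $t$. The key observation is that if $z_0\in B^n_r$ and $q=\xi(z_0)$, then $\gamma(t)=\xi(z_0+te)$ satisfies $\gamma'(t)=\sum_i e^i(\partial_i\xi)(z_0+te)=\sum_i e^i V_i(\gamma(t))=V_e(\gamma(t))$ by the $\xi$-relatedness hypothesis; hence $\gamma$ is an integral curve of the single field $V_e$ through $q$, and by uniqueness of integral curves of a holomorphic vector field it coincides with $t\mapsto\Phi_e^t(q)$. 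Crucially, $\Phi_e^t(q)$ may be defined for values of $t$ with $z_0+te\notin B^n_r$, so it genuinely extends $\xi$ along the line.

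Next I would extract uniform constants by a compactness argument. Since each $V_i$ is holomorphic and $K$ is compact, the flow map $(e,t,q)\mapsto\Phi_e^t(q)$ is defined and continuous on $\mathbb S\times\{|t|\le\epsilon\}\times K$ for some $\epsilon>0$, and, after shrinking $\epsilon$ and using a finite cover of $K$ by coordinate charts together with a Lebesgue-number argument, I may assume that for every $e\in\mathbb S$ and $q\in K$ the trajectory $\{\Phi_e^t(q):|t|<\epsilon\}$ lies in a single chart whose coordinates are bounded by a constant $C$. Fixing $z_0\in B^n_r$ and a coordinate $y^a$ near $q=\xi(z_0)$, the one-variable function $\gamma^a_e(t)=y^a(\Phi_e^t(q))$ is holomorphic and bounded by $C$ on $|t|<\epsilon$, so the Cauchy estimate gives $|P^a_k(e)|\le C\epsilon^{-k}$, where $P^a_k(e)=\sum_{|\alpha|=k}\frac{1}{\alpha!}(\partial^\alpha\xi^a)(z_0)\,e^\alpha$ is precisely the degree-$k$ homogeneous part of the Taylor expansion of $\xi^a$ at $z_0$ evaluated at $e$. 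By homogeneity this yields $|P^a_k(w)|\le C(|w|/\epsilon)^k$ for all $w\in\mathbb C^n$, so $\sum_k P^a_k(w)$ converges absolutely and uniformly on $|w|<\epsilon$ and defines a holomorphic extension of $\xi$ to the ball $B(z_0,\epsilon)$.

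Finally, since $\epsilon$ is independent of $z_0$, the balls $B(z_0,\epsilon)$ with $z_0\in B^n_r$ cover $B^n_{r+\epsilon}$, each furnishing a local analytic continuation of the germ of $\xi$; as $B^n_{r+\epsilon}$ is simply connected, the monodromy theorem assembles these into a single-valued holomorphic map $\tilde\xi:B^n_{\tilde r}\to\Omega$ with $\tilde r=r+\epsilon>r$ extending $\xi$, which completes the argument. I expect the only real obstacle to be technical: securing a uniform flow-existence time $\epsilon$ together with uniform chart bounds over the compact parameter set $\mathbb S\times K$ (the finite-cover and Lebesgue-number step). The conceptual heart is the reduction to the single field $V_e$, which sidesteps any Frobenius integrability requirement on the $V_i$.
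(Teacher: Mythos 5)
Your proof is correct in substance but takes a genuinely different route from the paper's. The paper also starts from the uniform flow time $\varepsilon_0$ on a relatively compact $\Omega_0\supset\overline{\xi(B^n_r)}$, but it then composes the local flows of the individual fields in all possible orders, $\psi^\sigma_q=\psi^{\sigma(1)}_{z^{\sigma(1)}}\circ\cdots\circ\psi^{\sigma(n)}_{z^{\sigma(n)}}(q)$, and observes that the permuted compositions agree at a point $q$ precisely when the system admits a local solution through $q$; since $\xi$ itself supplies such a solution at every $q=\xi(p)$, the compositions coincide and furnish local extensions, which are glued using convexity of $B^n_r$. You instead collapse the overdetermined system onto the single radial field $V_e=\sum_i e^iV_i$ along each complex line, so no commutation question ever arises, and you convert the uniform flow time into a uniform lower bound $\epsilon$ on the radius of convergence of the Taylor series of $\xi$ at every $z_0\in B^n_r$ via Cauchy estimates on $\gamma^a_e(t)=y^a(\Phi^t_e(q))$. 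This is arguably more quantitative (it literally says every germ of $\xi$ converges on a ball of radius $\epsilon$), at the cost of the chart-and-Lebesgue-number bookkeeping; the paper's flow-composition argument avoids all series estimates but has to address the order-of-composition issue that you sidestep. Two small repairs to your write-up: first, to see that the power series defines a map into $\Omega$ (not merely into $\mathbb C^m$), note that on each line $w=te$ its sum coincides with $y^a(\Phi^t_e(q))$, since both are holomorphic on $|t|<\epsilon$ with the same Taylor coefficients $P^a_k(e)$, so its values lie in the chart image and you may compose with $y^{-1}$; second, the appeal to the monodromy theorem is both unnecessary and slightly circular, because continuation along paths already presupposes consistency of the local extensions --- instead argue directly, as the paper does, from convexity: if $B(z_0,\epsilon)\cap B(z_1,\epsilon)\neq\emptyset$ with $z_0,z_1\in B^n_r$, the midpoint $\frac{1}{2}(z_0+z_1)$ lies in $B^n_r$ and in both balls, the overlap is convex hence connected, and both extensions agree with $\xi$ near that midpoint, so they agree on the whole overlap by the identity theorem, and the $\tilde\xi_{z_0}$ glue to a single holomorphic map on $B^n_{r+\epsilon}$.
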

Particularly in Section \ref{pfmthm} we need to use the follow special case of Proposition \ref{extholo0}
\begin{prop}\label{extholo}
Let $\Omega$ be an open domain in $\mathbb C^m$ and $\mathcal A:\Omega\rightarrow \mathbb C^m\otimes \mathbb C^n$ be a holomorphic map. If $r>0$ and $\xi:B^n_r\rightarrow \Omega$ is a holomorphic map such that
$D\xi=\mathcal A(\xi)$ and $X(B^n_r)$ is relatively compact in $\Omega$. Then $\xi$ can be extended to a holomorphic map on $B^n_{\tilde r}$ for some $\tilde r>r$.
\end{prop}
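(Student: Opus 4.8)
The plan is to read the equation $D\xi=\mathcal A(\xi)$ componentwise as the first-order system $\partial\xi/\partial z^i=A_i(\xi)$, where $A_i(w)=\mathcal A(w)e_i\in\mathbb C^m$ is the $i$-th column of $\mathcal A$. Setting $V_i=\sum_{a=1}^m A_i^a\,\partial/\partial w^a$ exhibits $V_1,\dots,V_n$ as global holomorphic vector fields on $\Omega$ for which $\partial/\partial z^i$ is $\xi$-related to $V_i$; together with the relative compactness of $\xi(B^n_r)$ this is exactly the hypothesis of Proposition~\ref{extholo0}, so Proposition~\ref{extholo} is its special case and it suffices to produce the extension by hand in the Euclidean setting. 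I describe that mechanism, which is the real content.

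First I would fix $K=\overline{\xi(B^n_r)}$, a compact subset of $\Omega$, and choose $\delta>0$ and a compact $K'$ with $K\subset K'\subset\subset\Omega$ such that the closed polydisk of radius $\delta$ about every point of $K$ lies in $K'$. Since $\mathcal A$ is holomorphic, Cauchy's estimates on these polydisks bound $\mathcal A$ and all of its derivatives uniformly over $K$. The point of the differential equation is that, for any $z_0\in B^n_r$, every partial derivative $\partial^\alpha\xi(z_0)$ is a universal polynomial in the jets of $A_1,\dots,A_n$ at $\xi(z_0)\in K$; differentiating the relation $\partial_{z^i}\xi=A_i(\xi)$ repeatedly makes this explicit.

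The key step is then an application of Cauchy's method of majorants: feeding the uniform jet bounds from the previous paragraph into the majorant estimate yields a radius of convergence $\rho>0$ for the Taylor series of $\xi$ at $z_0$ that does \emph{not} depend on $z_0\in B^n_r$. Each such series extends $\xi$ holomorphically to a ball of radius $\rho$ about $z_0$; where two such balls overlap the extensions agree, since the overlap is convex, meets $B^n_r$, and both functions coincide with $\xi$ there, so by the identity theorem they coincide throughout the overlap. Convexity of $B^n_r$ therefore lets the local pieces patch, with no monodromy obstruction, into a single holomorphic map on the $\rho$-neighborhood $B^n_{r+\rho}$ of $B^n_r$; taking $\tilde r=r+\rho$ finishes the proof, and the extended map still solves $D\xi=\mathcal A(\xi)$ by the identity theorem.

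The main obstacle is precisely the uniform lower bound $\rho$ in the third step: relative compactness of the image alone is far too weak to force extension of a bounded holomorphic map, and it is only the differential equation, through the majorant estimate (equivalently, through a uniform lower bound on the complex existence time of the holomorphic flows of $V_1,\dots,V_n$ through the compact set $K$), that supplies a location-independent radius of convergence. Everything else, namely the jet formulas and the patching, is routine once that estimate is in place.
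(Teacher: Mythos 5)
Your proposal is correct, but it proves the extension by a genuinely different mechanism than the paper. The paper establishes the more general manifold version (Proposition~\ref{extholo0}) by \emph{integrating the vector fields}: relative compactness of $\xi(B^n_r)$ gives a relatively compact $\Omega_0\supset\overline{\xi(B^n_r)}$ on which the local holomorphic flows $\psi^j$ of the $V_j$ exist for a uniform complex time $\varepsilon_0$; near a boundary point the extension is defined as an ordered composition $\psi^{\sigma(1)}_{z^{\sigma(1)}}\circ\cdots\circ\psi^{\sigma(n)}_{z^{\sigma(n)}}(q)$, with order-independence shown equivalent to local solvability of the system (which holds at points $q=\xi(p)$ since $\xi$ is already a solution), and the patching then uses convexity of $B^n_r$ exactly as you do. You instead obtain the uniform radius $\rho$ analytically, via Cauchy estimates on a compact $K'\subset\subset\Omega$ feeding the method of majorants applied to the jet formulas $\partial^\alpha\xi(z_0)=$ universal polynomial with nonnegative coefficients in the jets of the $A_i$ at $\xi(z_0)$; this is the classical Cauchy--Kovalevskaya estimate and is sound, with the advantage of being quantitatively explicit ($\rho$ computable from $\sup_{K'}|\mathcal A|$ and $\delta=\mathrm{dist}(K,\partial K')$) while the flow argument buys generality (it works when $\Omega$ is an abstract complex manifold, as in Proposition~\ref{extholo0} --- though the paper only invokes the Euclidean case downstream, so your direct proof loses nothing for Section~\ref{pfmthm}). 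Your parenthetical remark that the uniform $\rho$ amounts to a uniform lower bound on the existence time of the flows of $V_1,\dots,V_n$ through $K$ is exactly the paper's mechanism, so the two proofs are two implementations of the same compactness-to-uniformity principle. Two small points you assert without proof, both easily repaired: that an overlap $B(z_0,\rho)\cap B(z_1,\rho)\neq\emptyset$ with $z_0,z_1\in B^n_r$ meets $B^n_r$ follows from the midpoint $\frac{1}{2}(z_0+z_1)$, which lies in both balls and in $B^n_r$ by convexity (the paper makes the same unproved assertion); and if you want the extended map to still satisfy $D\xi=\mathcal A(\xi)$ you should note that the majorant bound keeps $|\xi(z)-\xi(z_0)|<\delta$ on the $\rho$-ball after shrinking $\rho$, so the extension takes values in $K'\subset\Omega$ and the composed right-hand side makes sense.
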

\begin{proof}[Proof of Proposition \ref{extholo0}]
Let $\tau$ be the standard complex coordinate on $\mathbb C$ and $\Delta_{\varepsilon}=\{\tau\in\mathbb C||\tau|<\varepsilon\}$ for any $\varepsilon>0$. Since $\xi(B_r)$ is relatively compact in $\Omega$, we can find  relatively compact domain $\Omega_0$ in $\Omega$ such that $\Omega_0\supset \overline{\xi(B_r)}$. Noting that each $V_j$ is holomorphic, we can find $\varepsilon_0>0$ and local holomorphic flow $\psi^j:\Delta_{\varepsilon_0}\times \Omega_0\rightarrow \Omega$ such that
\begin{equation}
d\psi^j_{(\tau,q)}\frac{\partial}{\partial\tau}\Big|_{(\tau,q)}=V_j\big|_{\psi^j(\tau,q)}.
\end{equation}
For each permutation $\sigma$ of $\{1,\cdots,n\}$ and $q\in\Omega_0$, we define
\begin{equation}
\psi^\sigma_q(z^1,\cdots,z^n)=\psi^{\sigma(1)}_{z^{\sigma(1)}}\circ\cdots\circ\psi^{\sigma(n)}_{z^{\sigma(n)}}(q).
\end{equation}
where $\psi^j_\tau(q)=\psi^j(\tau,q)$ for any $(\tau,q)\in \Delta_{\varepsilon_0}\times \Omega_0$. We can choose a small $\varepsilon_1>0$ such that each $\psi^{\sigma}_q$ is well-defined on $B^n_{\varepsilon_1}$.

By our construction, for any fixed $q\in \Omega_0$, the following two conditions are equivalent
\begin{enumerate}[label=(\alph*)]
\item For any two permutation $\sigma_1$ and $\sigma_2$, $\psi_q^{\sigma_1}=\psi_q^{\sigma_2}$.
\item There exists a holomorphic map $\psi:B^n_{\varepsilon_2}\rightarrow \Omega$ with some $\varepsilon_2>0$, such that $\frac{\partial}{\partial z^i}$ is $\psi$-related with $V_i$ for each $i=1,\cdots,n$.
\end{enumerate}
Furthermore, if Condition (b) holds, then for any permutation $\sigma$, $\psi_q^{\sigma}=\psi$  holds on $B^n_{\varepsilon_3}$ with $\varepsilon_3=\min\{\varepsilon_1,\varepsilon_2\}$.

Clearly for any $p\in B^n_r$, Condition (b) holds for $q=\xi(p)$ and consequently $\xi(z)=\psi_{\xi(p)}^{\sigma}(z-p)$ when $z\in B^n_{\varepsilon_1}\cap B(p,\varepsilon_2)$ for any permutation $\sigma$. Furthermore, since $B^n_r$ is convex, for any $p_1,p_2\in B^n_r$ with $W=B(p_1,\varepsilon_2)\cap B(p_2,\varepsilon_2)\neq\emptyset$ , $B^n_r\cap W$ is nonempty. Then $\psi_{\xi(p_1)}^{\sigma}(z-p_1)=\psi_{\xi(p_1)}^{\sigma}(z-p_2)$ for any $z\in W\cap B^n_r$ and consequently for any $z\in W$. By these facts we can easily verify that $\xi$ can be extended to a holomorphic map on $B^n_{r+\varepsilon_2}$.
\end{proof}

\section{The space of germs of complex submanifolds of a fixed dimension}\label{sgcs}
Let $M_0$ be a complex manifold and $1\leq n\leq \cdim M_0$. We start with some notations and definitions.
\begin{enumerate}[label=(\Alph*)]
\item $\mathcal V^n$ denotes the set of $n$-dimensional complex submanifolds of $M_0$.
\item For any $x\in M_0$,  the space of germs of $n$-dimensional complex submanifolds of $M_0$ at $x$ is
\begin{equation}
\mathcal S^n_x=\{S\in\mathcal V^n|p\in S\}/\sim_x,
\end{equation}
where $S_1\sim_x S_2$ if there exists a neighborhood $W$ of $x$ in $M_0$ such that $S_1\cap W=S_2\cap W$.
\item For any $x\in M_0$ and $S\in\mathcal V^n$ with $x\in S$, the germ  of $S$ at $x$ is the equivalent class
\begin{equation}
[S]_x=\{S_1\in\mathcal V^n|S_1\ni x \text{ and } S_1\sim_x S\}.
\end{equation}
\item $\mathcal S^n=\bigcup_{x\in M_0}\mathcal S^n_x$ is the {\bf  space of germs of $n$-dimensional complex submanifolds} of $M_0$, and $\sigma:\mathcal S^n\rightarrow M_0$ is the natural map $s\in\mathcal S^n_x\mapsto x$. 
\item For any $S\in\mathcal V^n$, $\rho_S:S\rightarrow \mathcal S^n$ denotes the natural map $x\mapsto [S]_x$ and $V_S=\{[S]_x|x\in S\}$.
\item The canonical topology of $\mathcal S^n_x$ is generated by the topology basis $\{V_S|S\in\mathcal V^n\}$.
\item A leaf $\mathcal L$ of $\mathcal S^n$ is a connected component of it.
\end{enumerate}

We have the following Theorem
\begin{thm}\label{thm0}
Up to equivalence, $(\mathcal S^n,\sigma)$ is the unique pair of topology space and continous map into $M_0$ such that
\begin{enumerate}
\item Each leaf $\mathcal L$ of $\mathcal S^n(M_0)$ admits the structure of an $n$-dimensional complex manifold such that $\sigma|_{\mathcal L}$ is a holomorphic immersion.
\item For any holomorphic immersion $\phi:M\rightarrow M_0$ with $M$ an $n$-dimensional complex manifold, there is a unique continuous map $\rho_{\phi}:M\rightarrow \mathcal S^n$ such that $\phi=\sigma\circ\rho_{\phi}$.
\end{enumerate}
\end{thm}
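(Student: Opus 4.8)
The plan is to verify that the concrete pair $(\mathcal S^n,\sigma)$ constructed above satisfies (1) and (2), and then to deduce uniqueness up to equivalence from these two properties alone. I would begin with the point-set bookkeeping. First check that $\{V_S\}_{S\in\mathcal V^n}$ is a basis: if $s=[S_1]_x=[S_2]_x\in V_{S_1}\cap V_{S_2}$, then $S_1,S_2$ coincide on a neighbourhood $W$ of $x$, and $S_3:=S_1\cap W=S_2\cap W$ gives $s\in V_{S_3}\subset V_{S_1}\cap V_{S_2}$. With the topology fixed, I would record that each $\rho_S:S\to V_S$ is a homeomorphism (its inverse is $\sigma|_{V_S}$, and $\rho_S^{-1}(V_{S'})=\{x\in S:S\sim_x S'\}$ is open), that $\sigma$ is continuous, and that $\sigma|_{V_S}:V_S\to S$ is a homeomorphism onto the submanifold $S\subset M_0$. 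Thus, chart by chart, $\sigma$ is simply the inclusion of an $n$-dimensional submanifold, and $\sigma$ has discrete fibres.

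For property (1) I would transport the complex structure of each representative $S$ to $V_S$ through $\rho_S$, so that the charts of $\mathcal S^n$ are the maps $\sigma|_{V_S}$ post-composed with holomorphic charts of $S$. The transition maps are holomorphic because on $V_{S_1}\cap V_{S_2}$ every germ has the form $[S_1]_y=[S_2]_y$, so $S_1$ and $S_2$ agree near $y$ and the change of chart is just the holomorphic coordinate change on this common submanifold; in these charts $\sigma$ is the inclusion $S\hookrightarrow M_0$, a holomorphic immersion. The one step that is not formal is that each leaf must be \emph{Hausdorff}, and this is where the analytic-continuation idea enters. I would prove $\mathcal S^n$ Hausdorff directly: distinct base points are separated using $M_0$, so suppose $s_1=[S_1]_x\neq s_2=[S_2]_x$ lie over the same $x$ and, for contradiction, that every pair of basic neighbourhoods meets; then there are $y_k\to x$ at which $S_1$ and $S_2$ agree as germs. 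Continuity of the tangent distributions forces $T_xS_1=T_xS_2$, so in suitable coordinates $S_1=\{z''=0\}$ and $S_2$ is a holomorphic graph $z''=g(z')$ near $x$; the agreement near the $y_k$ makes $g$ vanish on a nonempty open subset of a polydisc about $0$, whence $g\equiv0$ by the identity theorem and $[S_1]_x=[S_2]_x$, a contradiction. Second countability of each leaf then follows from that of $M_0$ by a Poincar\'e--Volterra argument, since $\sigma$ has discrete fibres.

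For property (2), given a holomorphic immersion $\phi:M\to M_0$ I would set $\rho_\phi(p)$ to be the germ at $\phi(p)$ of $\phi(U)$, where $U\ni p$ is shrunk so that $\phi|_U$ is an embedding; then $\sigma\circ\rho_\phi=\phi$, and continuity is immediate from $\rho_\phi|_U=\rho_{\phi(U)}\circ\phi|_U$. Uniqueness is the crucial, and pleasantly pointwise, observation: if $\rho:M\to\mathcal S^n$ is \emph{any} continuous lift of $\phi$ and $\rho(p)=[S]_{\phi(p)}$, continuity places $\rho(U)\subset V_S$ for some neighbourhood $U$ of $p$, which forces $\phi(U)\subset S$; since $\phi(U)$ is then a full-dimensional, hence open, submanifold of $S$, we get $[S]_{\phi(p)}=[\phi(U)]_{\phi(p)}=\rho_\phi(p)$. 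Thus $\rho$ is determined pointwise, and no connectedness of $M$ is needed.

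Finally, uniqueness of $(\mathcal S^n,\sigma)$ up to equivalence is standard universal-property yoga. Since $\mathcal S^n$ is locally connected its leaves are open; if $(\mathcal T,\pi)$ also satisfies (1) and (2), then applying (2) for $\mathcal T$ to the holomorphic immersions $\sigma|_{\mathcal L}$ leaf by leaf (using (1) for $\mathcal S^n$) and gluing yields a continuous $g:\mathcal S^n\to\mathcal T$ with $\pi\circ g=\sigma$, and symmetrically a continuous $h:\mathcal T\to\mathcal S^n$ with $\sigma\circ h=\pi$. Then $h\circ g$ and $\mathrm{id}$ are both continuous lifts of each $\sigma|_{\mathcal L}$, so the uniqueness in (2) gives $h\circ g=\mathrm{id}$, and likewise $g\circ h=\mathrm{id}$; hence $g$ is the desired equivalence. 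I expect the genuine obstacle to be the Hausdorff step of the second paragraph: it is the only place where the complex-analytic character of the submanifolds is essential, and making the identity theorem apply requires first matching the tangent spaces at the accumulating germ. Everything else is either topological bookkeeping or a direct instance of the universal property.
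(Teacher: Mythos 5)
Your proposal is correct and follows the same overall architecture as the paper: construct the topology from the basic sets $V_S$, verify that the concrete pair $(\mathcal S^n,\sigma)$ satisfies (1) and (2), and then deduce uniqueness up to equivalence formally from the universal property. The two genuinely non-formal steps are, however, handled by different (equally standard) means. For Hausdorffness, the paper chooses a connected neighbourhood $W$ of $x$ in which $V_i=S_i\cap W$ are connected and cut out by holomorphic equations, and shows directly that germ-agreement at a single point of $V_1\cap V_2$ would force $V_1=V_2$, thereby exhibiting the disjoint basic neighbourhoods $V_{V_1},V_{V_2}$ in one shot; you instead argue by contradiction with a sequence $y_k\to x$ of germ-agreement points, match $T_xS_1=T_xS_2$ by continuity of the tangent planes, and then apply the identity theorem to the graphing function $g$. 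Both rest on the same analytic-continuation principle, and your tangent-matching step (which you correctly flag as essential for writing $S_2$ as a graph over $T_xS_1$) is sound. For second countability of a leaf, the paper pulls back a Riemannian metric from $M_0$ and uses that every point can be joined to a fixed point by finitely many geodesic segments to obtain separability, whereas you invoke the Poincar\'e--Volterra theorem via discreteness of the fibres of $\sigma$ --- arguably the more classical route and at least as clean. Your pointwise uniqueness argument for the lift in (2) and the explicit universal-property argument for uniqueness of the pair are spelled out in more detail than in the paper, which declares both ``easily checked''; the one place where your final step is slightly glib is the gluing of $h:\mathcal T\rightarrow\mathcal S^n$, which requires the leaves of the abstract competitor $\mathcal T$ to be open in $\mathcal T$ --- this does not follow formally from (1) as stated (locally Euclidean components need not be open), though this is a defect of the informal uniqueness statement that the paper's own one-line treatment does not address either.
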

\begin{proof}
The uniqueness up to equivalence can be easily check by part (1) and (2). At the same time, part (2) can be easily checked by definition of $\mathcal S^n$ and its topology. Especially the map in (2) is $p\mapsto [\phi(U_p)]_{\phi(p)}$, where $U_p$ is an arbitrary open neighborhood of $p$ such that $\phi|_{U_p}$ is an embedding.

\smallskip

For part (1), we want to emphasize that in this paper, a complex manifold is always Hausdorff and secondly-countable. However, for non-secondly-countable spaces, we also use generalized concepts like complex atlas, holomoprhic map and Riemannian metric.

First we verify that $\mathcal S^n$ is Hausdorff. We only need to show that if $[S_1]_x\neq[S_2]_x$  for some $S_1,S_2\in \mathcal V^n$ and $x\in S_1\cap S_2$, then  there exist open subsets $V_1$ of $S_1$ and $V_2$  of $S_2$, such that $x\in V_1\cap V_2$ and $[S_1]_y\neq [S_2]_y$ for any $y\in V_1\cap V_2$. In fact, by the local properties of complex submanifolds, we  can find a connected open neighborhood $W$ of $x$ in $M_0$, such that
\begin{enumerate}[label=(\alph*)]
\item $V_1=S_1\cap W$ and $V_2=S_2\cap W$ are connected.
\item There exists holomorphic functions $f_1^{n+1},\cdots,f_1^{N}$ and $f_2^{n+1},\cdots,f_2^{N}$ on $W$, such that
\begin{equation}
V_1=\{f_1^{n+1}=\cdots=f_1^{N}=0\},\qquad V_2=\{f_2^{n+1}=\cdots=f_2^{N}=0\}.
\end{equation}
\end{enumerate}
Assume the opposite that $[S_1]_y=[S_2]_y$ for some $y\in V_1\cap V_2$, then $V_1\cap W_y=V_2\cap W_y$ for some open neighborhood $W_y$ of $y$ in $M_0$. Then by (a) and (b), we have $V_1=V_2$, which is contradict to the condition that $[S_1]_x\neq [S_2]_x$. So we obtain that $[S_1]_y\neq [S_2]_y$ for any $y\in V_1\cap V_2$.

Second we endow $\mathcal S^n$ with a complex atlas. Let $\mathcal A$ be the set of  pairs $(U,\varphi)$ which consists of an open set $U$ of $\mathcal S^n$ together with a map $\varphi:U\rightarrow \mathbb C^n$ such that there exists an $S\in\mathcal V^n$ with a holomorphic chart map $\varphi_S:S\rightarrow\mathbb C^n$ such that $U=U_S$ and $\varphi=\varphi_S\circ\rho^{-1}$.
Clearly each $(U,\varphi)\in\mathcal A$ is a chart and $\bigcup_{(U,\varphi)\in\mathcal A}U=M_0$. Furthermore, by the uniqueness of the complex structure of a complex submanifold, one can easily check any two charts in $\mathcal A$ are holomorphically compatible. So $\mathcal A$ is a complex atlas of $\mathcal S^n$

Third we show that a leaf $\mathcal L$ is secondly countable thus indeed a connected $n$-diemsnional complex manifold. Clearly $\phi$ is a holomorphic immersion. By pull-backing an arbitrary Riemannian metric on $M_0$, we can endow $M$ with a Riemannian metric $g$. As $M$ is Hausdorff, locally Euclidean and connected, $g$ induces a metric (distance) which is compatible with the topology of $M$. Furthermore, fixing a $p\in M$, any  $q\in M$ can be jointed with $p$ by finite many geodesic segments. By this fact one can easily check that $M$ is separable and consequently secondly countable.
\end{proof}

The following Corollary characterizes complex leaf pairs of $M_0$. We omit the proof.
\begin{cor}\label{leaf}
Let $\phi:M\rightarrow M_0$ be a holomoprhic immersion with $M$ an $n$-dimensional complex manifold. Then $(M,\phi)$ is  a complex leaf pair of $M_0$ if and only if it is equivalent to a pair $(\mathcal L,\sigma|_{\mathcal L})$ for some leaf $\mathcal L$ of $\mathcal S^n$.
\end{cor}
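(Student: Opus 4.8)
The plan is to let Theorem~\ref{thm0} do the structural work and to reduce everything to the universal property~(2) of $(\mathcal S^n,\sigma)$. The key preliminary observation, used in both directions, is the following: given any holomorphic immersion $\phi_1:M_1\to M_0$ from a connected $n$-dimensional manifold, Theorem~\ref{thm0}(2) furnishes a unique continuous lift $\rho_{\phi_1}:M_1\to\mathcal S^n$ with $\phi_1=\sigma\circ\rho_{\phi_1}$, given explicitly by $p\mapsto[\phi_1(U_p)]_{\phi_1(p)}$. Since $M_1$ is connected, $\rho_{\phi_1}(M_1)$ is connected, so it lands in a single leaf $\mathcal L'$. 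Moreover $\rho_{\phi_1}$ is automatically holomorphic as a map into $\mathcal L'$: the immersion $\sigma|_{\mathcal L'}$ is locally a biholomorphism onto embedded submanifolds of $M_0$, so near each point $\rho_{\phi_1}=(\sigma|_{\mathcal L'})^{-1}\circ\phi_1$. Thus every lift question reduces to identifying the correct leaf, while uniqueness of holomorphic lifts is automatic from the uniqueness clause of Theorem~\ref{thm0}(2).

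For the ``if'' direction I would show $(\mathcal L,\sigma|_{\mathcal L})$ satisfies $(\star)$. Given $\phi_1$ with $\phi_1(U_1)\subset\sigma(\mathcal L)$ on a nonempty open $U_1$, form $\rho_{\phi_1}$ as above; by the previous paragraph it is holomorphic into a single leaf $\mathcal L'$, and it only remains to prove $\mathcal L'=\mathcal L$, for which it suffices to find one point whose image lies in $\mathcal L$. This is the crux. Shrinking $U_1$ so that $\phi_1|_{U_1}$ is an embedding, set $S'=\phi_1(U_1)$, an $n$-dimensional embedded submanifold with $S'\subset\sigma(\mathcal L)$. Since $\mathcal L$ is second countable I can cover it by countably many charts $V_j$ with $\sigma|_{V_j}$ an embedding onto $\Sigma_j:=\sigma(V_j)$ and relatively compact $V_j'\Subset V_j$ still covering $\mathcal L$; then $C_j:=\sigma(\overline{V_j'})$ is compact, hence closed in $M_0$, and $C_j\subset\Sigma_j$. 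Now $S'\subset\bigcup_j C_j$ exhibits $S'$ as a countable union of relatively closed subsets, so by the Baire property of the manifold $S'$ some $S'\cap C_{j_0}$ has nonempty interior $O$. Since $O$ is open in $S'$ and contained in the embedded submanifold $\Sigma_{j_0}$ of the same dimension, the germs satisfy $[S']_x=[\Sigma_{j_0}]_x$ for $x\in O$, and $[\Sigma_{j_0}]_x$ is precisely the point of $V_{j_0}\subset\mathcal L$ lying over $x$. Hence $\rho_{\phi_1}$ sends the $\phi_1$-preimage of $x$ into $\mathcal L$, forcing $\mathcal L'=\mathcal L$. The main obstacle of the whole corollary is exactly this Baire-category-plus-analyticity step: one must rule out the lift wandering onto a different sheet of the immersed set $\sigma(\mathcal L)$.

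For the ``only if'' direction, assume $(M,\phi)$ is a complex leaf pair. Applying the construction above to $\phi$ itself yields a holomorphic $\rho_\phi:M\to\mathcal L$ into some leaf $\mathcal L$ with $\sigma|_{\mathcal L}\circ\rho_\phi=\phi$; as $\phi$ and $\sigma|_{\mathcal L}$ are immersions, $\rho_\phi$ is an immersion between $n$-dimensional manifolds, hence a local biholomorphism and in particular open, so $U_1:=\rho_\phi(M)$ is a nonempty open subset of $\mathcal L$ with $\sigma|_{\mathcal L}(U_1)=\phi(M)$. By the ``if'' direction $(\mathcal L,\sigma|_{\mathcal L})$ is a leaf pair, and the universal property $(\star)$ of $(M,\phi)$ applies to $\phi_1=\sigma|_{\mathcal L}$ (the hypothesis $\sigma|_{\mathcal L}(U_1)\subset\phi(M)$ holds), giving a holomorphic $\psi:\mathcal L\to M$ with $\phi\circ\psi=\sigma|_{\mathcal L}$. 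Then $\phi\circ(\psi\circ\rho_\phi)=\sigma|_{\mathcal L}\circ\rho_\phi=\phi$ and $\sigma|_{\mathcal L}\circ(\rho_\phi\circ\psi)=\phi\circ\psi=\sigma|_{\mathcal L}$, so by the respective uniqueness clauses $\psi\circ\rho_\phi=\mathrm{id}_M$ and $\rho_\phi\circ\psi=\mathrm{id}_{\mathcal L}$. Thus $\rho_\phi$ (with inverse $\psi$) is a biholomorphism realizing the equivalence of $(M,\phi)$ with $(\mathcal L,\sigma|_{\mathcal L})$, completing the proof.
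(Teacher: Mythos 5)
The paper offers no proof to compare against here: it states the corollary with the remark ``We omit the proof.'' Judged on its own, your argument is correct and complete, and it fills the omitted proof in the way the paper's framework invites: both directions are reduced to the universal lifting property of Theorem \ref{thm0}(2), the ``only if'' direction being the standard categorical argument in which two applications of the uniqueness clauses show $\rho_\phi$ and $\psi$ are mutually inverse. You also correctly identify where the real content lies, namely that $\phi_1(U_1)\subset\sigma(\mathcal L)$ must force the (a priori arbitrary) leaf $\mathcal L'$ containing the lift to equal $\mathcal L$; your Baire-category step --- cover the second countable leaf by plaques, shrink to $V_j'\Subset V_j$ so that $C_j=\sigma(\overline{V_j'})$ is compact, apply Baire on the embedded image $S'$, then use that an $n$-dimensional submanifold contained in an $n$-dimensional embedded submanifold is open in it, so germs agree --- is exactly the classical argument for maximal integral manifolds of foliations (initial submanifolds), and it is valid here. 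Note that it crucially uses second countability of leaves, which is precisely what the third step of the paper's proof of Theorem \ref{thm0} establishes, so your proof is consistent with the paper's setup. Three minor points you should make explicit in a write-up: (i) leaves are open in $\mathcal S^n$ (the topology is locally Euclidean, hence locally connected), so a holomorphic map into $\mathcal L$ composed with the inclusion is a continuous lift into $\mathcal S^n$, which is what lets the uniqueness clause of Theorem \ref{thm0}(2) deliver uniqueness of maps into $\mathcal L$; (ii) the identification of the point of $V_{j_0}$ over $x$ with the germ $[\Sigma_{j_0}]_x$ itself needs the same openness observation, applied to a basic neighborhood $V_T\subset V_{j_0}$; and (iii) the property $(\star)$ is invariant under equivalence of pairs, which is the (trivial but used) step transferring the leaf-pair property from $(\mathcal L,\sigma|_{\mathcal L})$ to $(M,\phi)$ in the ``if'' direction. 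None of these affects the correctness of the proposal.
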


\begin{proof}[Proof of Theorem \ref{rle}]
Let $\iota:S\rightarrow M_0$ be the inclusion map  and $\mathcal L$ the leaf of $\mathcal S^n$ which contains a (consequently every)  germ of $S$. By Corollary \ref{leaf}, $(\mathcal L,\sigma|_{\mathcal L})$ is the leaf extension of $S$ in $\mathbb C^2$.
\end{proof}

\begin{eg}
Let $\alpha>0$ and $S_\alpha=\{(e^{\tau},e^{\alpha\tau})|\tau\in \mathbb C \text{ and }|\tau|<1\}\subset \mathbb C^2$. The following table shows the leaf extension $(M_\alpha,\phi_\alpha)$  of $S_\alpha$
\begin{center}
\begin{tabular}{|c|c|c|}
\hline
$\alpha$ & $M_\alpha$ & $\phi_\alpha$\\
\hline
$k\in\mathbb Z_{>0}$ & $\{z_1^k=z_2\}$ & the inclusion map\\
\hline
$\frac{1}{k}$ with $k\in\mathbb Z_{>0}$ & $\{z_1=z_2^k\}$ & the inclusion map\\
\hline
$\frac{b}{a}$ with primitive integers $a,b\geq 2$ & $\{z_1^b=z_2^a\}\!\setminus\!\{0\}$ & the inclusion map\\
\hline
irrational & $\mathbb C$ & $\phi_{\alpha}(\tau)=(e^{\tau},e^{\alpha\tau})$\\
\hline
\end{tabular}
\end{center}
\end{eg}

For later use, we introduce the following Propositions
\begin{prop}\label{cptleaf}
Let $(M_0,\omega_0)$ be a compact Hermitian manifold, $Z\subset M_0$ an irreducible analytic subset of dimension $n$ , $Z_{reg}$ the regular set of $Z$ and $(M,\phi)$ the leaf extension of $Z_{reg}$ in $M$. If $(M,\phi^*\omega_0)$ is complete, then $M$ is compact and consequently $\phi(M)=Z$.
\end{prop}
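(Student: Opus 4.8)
The plan is to exploit the explicit description of $M$ as a leaf of the space of germs $\mathcal S^n$ and to reduce the statement to the Hopf--Rinow theorem. Since $Z$ is irreducible, $Z_{reg}$ is connected, so all its germs lie in one leaf $\mathcal L$, and by Corollary \ref{leaf} we may identify $(M,\phi)$ with $(\mathcal L,\sigma|_{\mathcal L})$. First I would show $\phi(M)\subset Z$. Call a germ $[S]_x$ \emph{subordinate to $Z$} if $S\cap W\subset Z$ for some neighborhood $W$ of $x$, and let $\mathcal S^n_Z\subset\mathcal S^n$ be the set of such germs. This set is open, since $S\cap W\subset Z$ forces the basic open set $V_{S\cap W}$ into $\mathcal S^n_Z$; it is also closed, for if a sequence in $\mathcal S^n_Z$ converges to $[S]_x$, then eventually these germs are of the form $[S]_{x_k}$ with $x_k\to x$, so an open subset of $S$ lies in $Z$, and choosing $S$ connected the analytic set $S\cap Z$ has nonempty interior in $S$, hence equals $S$ by the identity theorem. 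As $M$ is connected and contains the germs of $Z_{reg}$, which are subordinate to $Z$, we get $M\subset\mathcal S^n_Z$ and therefore $\phi(M)=\sigma(M)\subset Z$.

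Next I would analyze the open set $N:=\phi^{-1}(Z_{reg})=V_{Z_{reg}}$. For $s=[S]_x\in N$ the point $x$ is a regular point of $Z$, so near $x$ the set $Z$ coincides with the smooth $n$-manifold $Z_{reg}$; an $n$-dimensional submanifold germ contained in it must equal $[Z_{reg}]_x$, whence $s=\rho_{Z_{reg}}(x)$. Thus $\phi|_N\colon N\to Z_{reg}$ is a biholomorphism with inverse $\rho_{Z_{reg}}$, and since $\phi^*\omega_0=g$ it is in fact an isometry of $(N,g|_N)$ onto $(Z_{reg},\omega_0|_{Z_{reg}})$. Moreover $N$ is dense in $M$: given any $s=[S]_x\in M$, the points $y\in S$ near $x$ with $y\in Z_{reg}$ are dense in $S$ near $x$, being the complement of the proper analytic subset $S\cap Z_{sing}$, and $\rho_S(y)\in N$ with $\rho_S(y)\to s$ as $y\to x$.

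With these structural facts the completeness hypothesis enters only at the last step. Fix $p_0\in N$ and $x_0=\phi(p_0)$. For $s\in N$, any path in $Z_{reg}$ from $x_0$ to $\phi(s)$ lifts through the isometry $(\phi|_N)^{-1}$ to a path in $N$ of equal length from $p_0$ to $s$, so $d_g(p_0,s)\le \diam(Z_{reg},\omega_0|_{Z_{reg}})=:R$. Because $d_g(p_0,\cdot)$ is continuous and $N$ is dense, the bound $d_g(p_0,\cdot)\le R$ persists on all of $M$, so $(M,g)$ has finite diameter. As $(M,g)$ is complete and connected, Hopf--Rinow shows that the closed ball $\bar B_g(p_0,R)=M$ is compact. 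Then $\phi(M)$ is compact, hence closed, and contains the dense subset $Z_{reg}$ of $Z$; combined with $\phi(M)\subset Z$ this yields $\phi(M)=Z$.

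The crux of the argument is the finiteness of $R=\diam(Z_{reg},\omega_0|_{Z_{reg}})$, that is, the fact that the regular part of a compact complex analytic set has finite diameter for the induced length metric. I expect this to be the main point: it is not a formal consequence of the compactness of $Z$, since $Z_{reg}$ itself is noncompact and incomplete when $Z$ is singular, but it does follow from the local structure of analytic sets --- each point of $Z$ has a neighborhood over which $Z_{reg}$ has finite intrinsic diameter, because complex analytic arcs have finite length and so the singular points sit at finite distance, and a finite subcover together with the connectedness of $Z_{reg}$ produces a global bound. Everything else is formal once this geometric input and the germ description of $M$ are available; note in particular that completeness is used only to upgrade \emph{bounded diameter} to \emph{compact}, which is exactly where it must be needed.
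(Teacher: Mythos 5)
Your proposal is correct and follows essentially the same route as the paper: identify $M\setminus\phi^{-1}(Z_{sing})$ isometrically with $(Z_{reg},\omega_0|_{Z_{reg}})$, bound $\diam(M,\phi^*\omega_0)$ by the finite intrinsic diameter of $Z_{reg}$, and use completeness (Hopf--Rinow) to upgrade bounded diameter to compactness, whence $\phi(M)=Z$. The paper's proof is just a terser version of yours --- it asserts without detail the biholomorphism $\phi:M\setminus\Sigma\rightarrow Z_{reg}$, the inclusion $\phi(M)\subset Z$, and the finiteness of $\diam(Z_{reg},\omega_0|_{Z_{reg}})$, all of which you correctly identify and justify (your open-closed germ argument and the local finite-diameter sketch fill genuine gaps the paper leaves to the reader).
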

\begin{proof}
Let $Z_{sing}$ be the singular set of $Z$ and  $\Sigma=\phi^{-1}(Z_{sing})$. Clearly $\Sigma$ is an analytic subset of $M$. By the definition of leaf extensions one can check that $\phi:M\!\setminus\!\Sigma\rightarrow Z_{reg}$ is biholomorphic. Then
\begin{equation}
\mathrm{diam}(M,\phi^*\omega_0)=\mathrm{diam}(M\!\setminus\!\Sigma,\phi^*\omega_0)=\mathrm{diam}(Z_{reg},\omega_0|_{Z_{reg}})<\infty.
\end{equation}
Together with its completeness,  we deduce that $(M,\omega)$  is compact.
\end{proof}

\begin{prop}\label{qnleaf}
Let $(M,\phi)$ be the leaf extension of the following hypersurface in $\CP^{n+1}$
\begin{equation}
\{[1,z,f(z)]\in\CP^{n+1}|z\in B^n_r\},
\end{equation}
where $r>0$ and $f$ is a nonzero holomorphic function on $B^n_r$ with $f(0)=0$ and $Df(0)=0$. If $(M,\phi^*\omega_{FS})$ is complete and $f$ is a polynomial, then up to an automorphism of $\CP^{n+1}$, $\phi(M)$ is
\begin{equation}
Q_n=\{[w]\in\CP^{n+1}|(w^0)^2+(w^1)^2+\cdots+(w^n)^2+(w^{n+1})^2=0\}.
\end{equation}
Consequently $\phi$ is an embedding.
\end{prop}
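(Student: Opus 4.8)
The plan is to reduce the statement to the classification of quadrics via Proposition \ref{cptleaf}, and then to exclude all bad singularities by the fact that the leaf extension map is an immersion. Write $f=f_2+\cdots+f_d$ with $f_k$ homogeneous of degree $k$ and $f_d\neq 0$; since $f(0)=0$ and $Df(0)=0$ we have $d:=\deg f\geq 2$. Because $f$ is a polynomial, $S$ is contained in the full affine graph $V=\{[1,z,f(z)]\in\CP^{n+1}\mid z\in\mathbb C^n\}$, and I would let $Z=\overline V$ be its closure, an irreducible hypersurface of dimension $n$. Homogenizing $w^{n+1}/w^0=f(w^1/w^0,\dots,w^n/w^0)$ shows that $G:=w^{n+1}(w^0)^{d-1}-F$ vanishes on $V$, hence on $Z$, where $F$ is the degree-$d$ homogenization of $f$. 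Since $V\cong\mathbb C^n$ is smooth, dense, and irreducible in $Z$, we get $V\subset Z_{reg}$ with $Z_{reg}$ connected, so the germs of $S$ and of $Z_{reg}$ lie on the same leaf of $\mathcal S^n$; thus $(M,\phi)$ is also the leaf extension of $Z_{reg}$. Proposition \ref{cptleaf} then applies: completeness of $(M,\phi^*\omega_{FS})$ forces $M$ to be compact and $\phi(M)=Z$.

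The structural consequence I would exploit is that, by Theorem \ref{thm0}, $\phi=\sigma|_{\mathcal L}$ is a holomorphic immersion; as $\phi(M)=Z$, every point of $Z$ (including its singular points) must lie on a smooth $n$-dimensional local branch of $Z$. I would use this to rule out $d\geq 3$. Consider $p_\infty=[0:\cdots:0:1]$, which is a limit of points of $V$ and so lies on $Z$; working in the chart $w^{n+1}=1$ with coordinates $(u^0,u')$, $u'=(u^1,\dots,u^n)$, the relation $G=0$ reads $P(u^0,u')=0$ with $P=(u^0)^{d-1}-\sum_{k=2}^{d}f_k(u')(u^0)^{d-k}$. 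The lowest-order part of $P$ is $(u^0)^{d-1}$, so expanding $P$ along a curve $\gamma(t)$ in any smooth branch $L\subset Z$ with $\gamma(0)=p_\infty$ forces the tangent hyperplane of $L$ to be $\{u^0=0\}$; hence $L$ is a graph $u^0=g(u')$ with $g\not\equiv 0$ (the hyperplane $\{u^0=0\}$ is not contained in $Z$, as $f_d\neq 0$) vanishing to order $m\geq 2$. Substituting into $P=0$ gives $g^{d-1}=\sum_{k=2}^{d}f_k(u')g^{d-k}$, whose right-hand side has nonzero homogeneous part $f_d(u')$ of degree $d$, while the left-hand side has order $m(d-1)\geq 2(d-1)>d$ for $d\geq 3$. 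Comparing the degree-$d$ parts yields $f_d\equiv 0$, a contradiction; hence $d=2$.

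With $d=2$, $f=f_2$ is a nonzero quadratic form and $Z=\{w^0w^{n+1}=f_2(w^1,\dots,w^n)\}$ is a quadric whose defining form has rank $\rank f_2+2$, with $\rank f_2\geq 1$. If $\rank f_2<n$ this quadric is a cone, singular along a positive-dimensional vertex; an argument of the same shape (the tangent cone at a vertex point is the quadric cone itself, which carries no $n$-dimensional totally isotropic subspace when $\rank f_2\geq 1$) shows that no vertex point lies on a smooth branch, again contradicting $\phi(M)=Z$. Therefore $f_2$ is nondegenerate, $Z$ is the smooth quadric, and after a projective linear change of coordinates $\phi(M)=Z=Q_n$. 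Finally, since $Q_n$ is smooth we have $Z_{sing}=\emptyset$, so the biholomorphism $\phi:M\setminus\phi^{-1}(Z_{sing})\to Z_{reg}$ from the proof of Proposition \ref{cptleaf} is defined on all of $M$, and $\phi$ is an embedding.

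The main obstacle is the local analysis at the singular points: showing that the non-reduced tangent cone at $p_\infty$ for $d\geq 3$, and the quadric-cone vertices for degenerate $f_2$, admit no smooth $n$-dimensional branch. This is exactly where completeness enters, through the combination $\phi(M)=Z$ and the immersivity of $\phi$, since a single point of $Z$ with no smooth branch would be uncovered and would render $(M,\phi^*\omega_{FS})$ incomplete.
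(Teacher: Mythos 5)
Your proposal is correct and shares the paper's overall skeleton: homogenize $f$ to produce the irreducible hypersurface $Z$, observe that $(M,\phi)$ is also the leaf extension of $Z_{reg}$, invoke Proposition \ref{cptleaf} to get $M$ compact and $\phi(M)=Z$, and then use immersivity of $\phi$ at preimages of singular points to force $d=2$ and nondegeneracy. Your exclusion of $d\geq 3$ is in substance identical to the paper's: the paper pulls back the affine coordinates $x^i=\frac{w^i}{w^{n+1}}\circ\phi$ at a point over $[0,\cdots,0,1]$, shows $x^0$ vanishes to order at least $2$ so that $\{x^1,\cdots,x^n\}$ is a chart, and compares vanishing orders in $(x^0)^{d-1}=P(x^0,\cdots,x^n)$ to get $2(d-1)\leq d$; your graph function $g$ is exactly this $x^0$ expressed in those coordinates. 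Where you genuinely diverge is the degenerate-rank step: the paper again pulls back coordinates at the vertex point $[0,\cdots,0,1,0]$ and rules out every candidate chart by a case-by-case analysis of the relation $y^0y^{n+1}=(y^1)^2+\cdots+(y^{n_0})^2$ (a right-hand side that fails to be holomorphic, or fails to be a square), whereas you use the cleaner and more uniform criterion that the tangent space of a smooth $n$-dimensional local branch must lie in the tangent cone, and the rank-$(n_0+2)$ quadric cone in $n+1$ affine variables contains no $n$-dimensional linear (totally isotropic) subspace once $n_0\geq 1$; since the same principle underlies your $d\geq 3$ argument, your proof unifies both exclusions under one mechanism, which is a real structural improvement in exposition. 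Two minor remarks: the vertex of the cone is $\mathrm P(\{w^0=\cdots=w^{n_0}=w^{n+1}=0\})\cong\CP^{\,n-n_0-1}$, which is a single point rather than positive-dimensional when $n_0=n-1$ --- harmless, since your argument only needs it to be nonempty; and you should state explicitly the standard fact that $T_pL\subset C_p(Z)$ for a smooth branch $L\subset Z$ (for an arc $\gamma(t)=tv+O(t^2)$ in $Z$, the initial form of the defining polynomial must kill $v$), which you in effect prove in the $d\geq 3$ case but use silently in the quadric case. With these points noted, your argument is complete, including the concluding embedding statement, which you correctly obtain from $Z_{sing}=\emptyset$ via the biholomorphism in the proof of Proposition \ref{cptleaf}, just as the paper intends.
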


\begin{proof}
Let $d=\deg(f)$ and $P$ be the homogeneous holomorphic polynomial of degree $d$ on $\mathbb C^{n+1}$ such that $P(1,z)=f(z)$ for any $z\in \mathbb C^n$. Consider
\begin{equation}
Z=\{[w]\in\CP^{n+1}|(w^0)^{d-1}w^{n+1}-P(w^0,w^1,\cdots,w^n)=0\}.
\end{equation}
Clearly $Z$ is irreducible and $(M,\phi)$ is also the leaf extension of the regular set of $Z$ in $\CP^N$. By Proposition \ref{cptleaf}, $M$ is compact and $\phi(M)=Z$. To finish the proof, we need to show that up to a homogeneous linear transposition
\begin{equation}
f(z)=(z^1)^2+\cdots+(z^n)^2.
\end{equation}

First we prove that $d=2$. Clearly $[0,\cdots,0,1]\in Z$. We choose a $p\in M$ such that $\phi(p)=[0,\cdots,0,1]$. For $i=0,1,\cdots,n$, consider the function defined around $p$
\begin{equation}
x^i=\phi\circ\frac{w^i}{w^{n+1}}.
\end{equation}
Obviously $x^i$ is holomorphic, $x^i(p)=0$ and
\begin{equation}\label{eqatinfty}
(x^0)^{d-1}=P(x^0,\dots,x^n),
\end{equation}
Since $\phi$ is an immersion, there is an $k\in\{0,1,\cdots,n\}$, such that $\{x^i|i\neq k\}$ is a complex coordinate system around $p$. Since $P$ is homogeneous of degree $d$, the vanishing order of $(x^0)^{d-1}=P(x^0,\cdots,x^n)$ at $p$ is at least $d$. So the vanishing order of $x^0$ is at least $2$ and consequently $\{x^1,\cdots,x^n\}$ is a complex coordinate system around $p$. By the choice of $P$, the vanishing order of $(x^0)^{d-1}=P(x^0,\cdots,x^n)$ at $p$ is exactly $d$. Therefore we must have $d=2$.

Since $d=2$, we can write
\begin{equation}
f(z)=\sum_{i,j=1}^n a_{ij}z^iz^j,
\end{equation}
where $(a_{ij})$ is a symmetric $n$-order  matrix. Let $n_0=\rank(a_{ij})$, then $1\leq n_0\leq n$. Up to a homogeneous linear transposition, we can write
\begin{equation}
f(z)=(z^1)^2+\cdots+(z^{n_0})^2,
\end{equation}
and then
\begin{equation}
Z=\{[w]\in\CP^{n+1}|w^0w^{n+1}-(w^1)^2-\cdots-(w^{n_0})^2=0\}.
\end{equation}
In the following we prove that $n_0=n$ by contradiction. Suppose that $n_0<n$, then $[0,\cdots,0,1,0]\in Z$. We choose a point $q\in M$ with $\phi(q)=[0,\cdots,0,1,0]$. For $i=1,\cdots,n-1,n+1$, consider the function defined around $q$
\begin{equation}
y^i=\phi\circ\frac{w^i}{w^{n+1}},
\end{equation}
Obviously $y^i$ is holomorphic, $y^i(q)=0$ and
\begin{equation}\label{eqatinfty1}
y^0 y^{n+1}=(y^1)^2+\cdots+ (y^{n_0})^2.
\end{equation}
$\phi$ is an immersion, so there should exist an $l\in\{0,1,\cdots,n-1,n+1\}$, such that $\{y^i|i\neq n,l\}$ is a complex coordinate system around $q$. However, this doesn't hold. If $l=0$,  we should have $y^{0}=(y^{n+1})^{-1}((y^1)^2+\cdots+ (y^{n_0})^2)$, while the right hand side is not holomorphic around $q$. If $l>n_0$, we should have  $y^{n+1}=(y^0)^{-1}((y^1)^2+\cdots+ (y^{n_0})^2)$, while the right hand side is not holomorphic around $q$. If $1\leq l\leq n_0$, we should have $(y^{l})^2=y^0y^{n+1}-(y^1)^2-\cdots-(y^{l-1})^2-(y^{l+1})^2-\cdots-(y^{n_0})^2$, while the right hand side is not the square of any holomorphic function around $q$.
\end{proof}

\section{Proof of the main theorem}\label{pfmthm}

In order to prove Theorem \ref{mthm1}, we need the following two Lemmas. Their proofs are put behind.
\begin{lem}\label{sc4}
Let $(M, \omega)$ be a connected $n$-dimensional K\"ahler manifold which admits a holomorphic isometric immersion $\phi$ into $ (\CP^N,\omega_{FS})$. Then $\omega$ is extremal if and only if there exists a holomorphic vector field $X$ and an $(N+1)$-order Hermitian matrix $A$, such that $X$ is $\phi$-related to $X^{[A]}$ and
\begin{equation}
n(n+1)-\frac{1}{4}R(\omega)=\frac{w^*Aw}{|w|^2}\circ \phi.
\end{equation}
\end{lem}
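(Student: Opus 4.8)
Set $F:=n(n+1)-\tfrac14 R(\omega)$, a real-analytic function on $M$. The assertion is that $\omega$ is extremal exactly when $F$ is the pullback by $\phi$ of a moment-map function $h_A:=\tfrac{w^*Aw}{|w|^2}$, $A$ Hermitian, together with tangency of the associated field. The single computation underlying everything is the identity on $(\CP^N,\omega_{FS})$
\[
\nabla^{1,0}h_A=X^{[A]}\qquad(A=A^*),
\]
which follows from a direct calculation in an affine chart, the normalizing constants $\tfrac14$ and $n(n+1)$ being fixed by the totally geodesic model $M=\CP^n,\ A=0$ (where $F\equiv0$). I also use that, for the holomorphic isometric immersion $\phi$, the field $\nabla^{1,0}(h\circ\phi)$ equals the $\phi(M)$-tangential component of $(\nabla^{1,0}h)\circ\phi$.

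The implication ``$\Leftarrow$'' is then formal. If $X$ is $\phi$-related to $X^{[A]}$, then $X^{[A]}$ is tangent to $\phi(M)$, so its tangential part is itself; hence $\nabla^{1,0}F=\nabla^{1,0}(h_A\circ\phi)=X$. Since $X$ is $\phi$-related to the holomorphic field $X^{[A]}$ and $\phi$ is a holomorphic immersion, $X$ is holomorphic, so $\nabla^{1,0}R=-4\,\nabla^{1,0}F$ is holomorphic and $\omega$ is extremal.

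For ``$\Rightarrow$'' assume $\omega$ extremal, so $X:=\nabla^{1,0}F$ is a holomorphic vector field on $M$. Fix a Bochner chart as in Proposition \ref{imlc}, with local lift $W=Q(1,z,f)$, so that $\omega=\tfrac{\iu}{2}\pbp\log|W|^2$ and $\phi=[W]$. The plan is to produce a holomorphic map $G$ into $\mathbb C^{N+1}$ which (i) represents $d\phi(X)$, i.e.\ $G\equiv\sum_iX^i\partial_iW\pmod{\mathbb C\,W}$, and (ii) satisfies $W^*G=F\,|W|^2$. Property (ii) makes $W^*G$ real-valued, so—after passing to the linearly full case by Lemma \ref{rgdhol}(1)—Lemma \ref{rgdhol}(3) yields a constant Hermitian matrix $A$ with $G=AW$; the matrices obtained on overlapping charts agree by the uniqueness of real-analytic functions exploited in the proof of that lemma, so $A$ is global. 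Then $F=\tfrac{W^*AW}{|W|^2}=h_A\circ\phi$, while (i) together with the displayed identity shows that $AW$ represents the tangent field $X^{[A]}$ along $\phi(M)$, i.e.\ $X$ is $\phi$-related to $X^{[A]}$, as required.

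The heart of the matter is the construction of $G$, equivalently the claim that $d\phi(X)$ is the restriction to $\phi(M)$ of a global holomorphic vector field on $\CP^N$. This is the main obstacle, and it is genuinely particular to the extremal setting: a holomorphic vector field tangent to a submanifold need not extend to the ambient space, and the evident pointwise candidate $G=\sum_iX^i\partial_iW+\lambda W$ forced by (ii) has coefficient $\lambda$ that is only real-analytic. To show that $\lambda$ may be chosen holomorphic I would compute $F$ to second order through the Ricci form of $\omega=\phi^*\omega_{FS}$: expressing $\rho(\omega)$ in terms of the osculating spaces of the immersion writes $F=n(n+1)-\tfrac14R(\omega)$ through the second fundamental form of $\phi$, and feeding this into $X=\nabla^{1,0}F$ reduces $\bar\partial\lambda=0$ to an identity that holds precisely because $F$ is this curvature quantity and $X$ is holomorphic. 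Once $\lambda$ is holomorphic, so is $G$, and the rigidity Lemma \ref{rgdhol} both selects the Hermitian representative and globalizes the construction, completing the proof.
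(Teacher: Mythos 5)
Your skeleton is the same as the paper's: reduce to the linearly full case via Lemma \ref{rgdhol}(1), take a local holomorphic lift $W$ of $\phi$, form the candidate $G=\sum_i X^i\partial_i W+\lambda W$ with $\lambda$ forced by $W^*G=F|W|^2$, invoke Lemma \ref{rgdhol}(3) to get a constant Hermitian $A$ with $G=AW$, and globalize by real-analyticity and connectedness. But the one step you defer --- holomorphicity of $\lambda$ --- is exactly where the proposal stops being a proof, and your proposed route to it is off target. You claim $\bar\partial\lambda=0$ should come out of a second-order expansion of $F=n(n+1)-\tfrac14R(\omega)$ through the Ricci form and the second fundamental form of $\phi$, ``precisely because $F$ is this curvature quantity.'' That diagnosis is wrong: no property of $F$ is used beyond the hypothesis that its $(1,0)$-gradient $X=\sum_{i,j}g^{i\bar j}F_{\bar j}\partial_i$ is holomorphic, which is why the paper proves the statement for an \emph{arbitrary} real-valued function $h$ with holomorphic gradient (its Lemma \ref{sc3}) and then specializes to $h=n(n+1)-\tfrac14R(\omega)$. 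The verification is two lines: since $\bar\partial X^i=0$ and $\log|W|^2$ is a potential for $\omega$,
\begin{equation}
\frac{\partial}{\partial \bar z^l}\big(X\log |W|^2\big)
=\sum_{i,j}g^{i\bar j}\frac{\partial h}{\partial \bar z^j}\,\frac{\partial^2\log|W|^2}{\partial z^i\partial\bar z^l}
=\sum_{i,j}g^{i\bar j}\frac{\partial h}{\partial \bar z^j}\,g_{\bar l i}
=\frac{\partial h}{\partial \bar z^l},
\end{equation}
so $\tilde h:=X\log|W|^2-h$ is holomorphic, and your $\lambda=h-X\log|W|^2=-\tilde h$ is holomorphic with no curvature identity needed. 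Then $G=(X-\tilde h)W$ satisfies $W^*G=|W|^2h$, which is real-valued, and Lemma \ref{rgdhol}(3) applies as you intended; the $\phi$-relatedness of $X$ with $X^{[A]}$ follows from $G=AW$ under $T^{1,0}_{[w]}\CP^N\simeq\mathbb C^{N+1}/[w]$.

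So the concrete gap is this: the step you yourself call ``the heart of the matter'' is a promissory note, and the mechanism you propose for discharging it (osculating spaces, Ricci form, second fundamental form) is both unnecessary and misleading --- had the identity genuinely depended on $F$ being the normalized scalar curvature, you would also owe a verification tied to the constants $n(n+1)$ and $\tfrac14$, whereas in fact additive constants are absorbed by $A\mapsto A+cI_{N+1}$ and the whole argument is insensitive to them. The remainder of your proposal (the ``$\Leftarrow$'' direction via tangential projection of gradients, the fullness reduction, and the globalization of $A$ by uniqueness of real-analytic continuation) is correct and matches the paper.
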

\begin{rmk}
In fact, by Theorem \ref{mthm1} and Lemma \ref{exstcrt} one can easily check that the matrix $A$ in Lemma \ref{sc4} can be chosen to be positive semi-definite.
\end{rmk}
\begin{lem}\label{lcext}
Let $N>n\geq 1$, $r>0$, $\Lambda\geq 0$ and $r_\Lambda=\big((3n+2)(\Lambda+1)^{\frac{1}{2}}\big)^{-1}$. If $f:B^n_r\rightarrow \mathbb C^{N-n}$ is a holomorphic map with $f(0)=0$ and $Df(0)=0$ such that
\begin{equation}\label{sceqA1}
n(n+1)-\frac{1}{4}R(\omega_f)=\frac{(1,z,f)^*A(1,z,f)}{|w|^2},
\end{equation}
where $\omega_f=\frac{\iu}{2}\pbp\log (1+|z|^2+|f|^2)$ and $A\leq \Lambda I_{N+1}$ is an $(N+1)$-order Hermitian matrix. Then
\begin{equation}\label{exteq1}
1+|z|^2+|f|^2+|Df|^2\leq (1-r_{\Lambda}^{-1}|z|)^{\frac{2}{3n+2}}.
\end{equation}
Moreover, if $r<r_{\Lambda}$, then $f$ can be extended to a holomorphic map on $B^n_{r_\Lambda}$.
\end{lem}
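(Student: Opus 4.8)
The plan is to read the hypothesis \eqref{sceqA1} as a two–sided bound on the intrinsic Laplacian of a Gram determinant, convert it by a metric comparison into a first–order differential inequality for the auxiliary quantity $\Psi:=1+|z|^2+|f|^2+|Df|^2$, integrate that inequality along rays from the origin to obtain \eqref{exteq1}, and finally feed the resulting relative compactness into Proposition \ref{extholo0}/\ref{extholo} to perform the extension. Throughout I work in the Bochner picture of Proposition \ref{imlc} and Example \ref{bcCPN}, writing $\Phi=(1,z,f)$ so that $\omega_f=\frac{\iu}{2}\pbp\log|\Phi|^2$.

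First I would let $G$ be the Gram matrix of $(\Phi,\partial_1\Phi,\dots,\partial_n\Phi)$, so that $g_{i\bar j}=\partial_i\partial_{\bar j}\log|\Phi|^2$ satisfies $\det g=\det G/|\Phi|^{2(n+1)}$. Computing the Ricci potential then gives the identity $n(n+1)-\tfrac14 R(\omega_f)=\Delta_{\omega_f}\log\det G$, where $\Delta_{\omega_f}=g^{i\bar j}\partial_i\partial_{\bar j}$ and $\log\det G=\log\|\Phi\wedge\partial_1\Phi\wedge\cdots\wedge\partial_n\Phi\|^2$ is plurisubharmonic, being the logarithm of the squared norm of a holomorphic multivector; so $h_{i\bar j}:=\partial_i\partial_{\bar j}\log\det G\ge 0$. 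Combined with \eqref{sceqA1} and $A\le\Lambda I_{N+1}$ this yields $0\le\Delta_{\omega_f}\log\det G\le\Lambda$. I would also record that $\Psi=|(1,z,f,Df)|^2$ is the squared norm of a holomorphic map, hence $\log\Psi$ is plurisubharmonic and, writing $\Delta=\sum_i\partial_i\partial_{\bar i}$, one has $\Delta\log\Psi\le \Psi^{-1}\bigl(n+|Df|^2+|D^2f|^2\bigr)$.

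The core step is the metric comparison turning the intrinsic bound into a Euclidean one. Since $g\le |\Phi|^{-2}\bigl(I_n+(Df)^*(Df)\bigr)$ and $|\Phi|^2\ge 1$, the largest eigenvalue of $g$ is at most $\Psi$, so $g^{-1}\ge\Psi^{-1}I_n$ and, using $h\ge 0$, $\Delta\log\det G=\operatorname{tr}h\le\Psi\,\operatorname{tr}(g^{-1}h)=\Psi\,\Delta_{\omega_f}\log\det G\le\Lambda\Psi$. The heart of the matter is to convert this into control of the full coordinate Hessian $|D^2f|^2$ by $\Lambda$ and powers of $\Psi$: one must bound $\lambda_{\min}(g)^{-1}$ and the induced normal metric from below, both of which degenerate at infinity, the former via $\det g$ together with $\lambda_{\max}(g)^{n-1}\le\Psi^{n-1}$, which is exactly where the dimension $n$ enters. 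Carrying this bookkeeping out should produce a bound $|D^2f|\lesssim(\Lambda+1)^{1/2}\Psi^{(3n+3)/2}$, hence a first–order estimate for the Euclidean gradient $|\partial_z\Psi|\le(\Lambda+1)^{1/2}\Psi^{(3n+4)/2}$, and therefore, along any ray $z=t\zeta$ with $|\zeta|=1$, the closed differential inequality $\tfrac{d}{dt}\Psi(t\zeta)\le 2(\Lambda+1)^{1/2}\Psi(t\zeta)^{(3n+4)/2}$.

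Granting that inequality, the initial conditions $f(0)=0$ and $Df(0)=0$ give $\Psi(0)=1$, and since $\tfrac{d}{dt}\Psi^{-(3n+2)/2}=-\tfrac{3n+2}{2}\Psi^{-(3n+4)/2}\tfrac{d}{dt}\Psi\ge -(3n+2)(\Lambda+1)^{1/2}=-r_\Lambda^{-1}$, integration yields $\Psi^{-(3n+2)/2}\ge 1-r_\Lambda^{-1}|z|$, i.e. the estimate \eqref{exteq1} in the equivalent form $\bigl(1-r_\Lambda^{-1}|z|\bigr)\Psi^{(3n+2)/2}\le 1$. Finally, for $r<r_\Lambda$ the bound \eqref{exteq1} keeps $(1,z,f,Df)$, and more generally the osculating frame of $\Phi$, inside a fixed compact region as $|z|$ approaches $r_\Lambda$; because all the osculating spaces lie in the fixed finite–dimensional $\mathbb C^{N+1}$, the jets $(z,f,Df,\dots)$ satisfy a closed first–order holomorphic system $D\xi=\mathcal A(\xi)$, so Proposition \ref{extholo0}/\ref{extholo} extends $\xi$, hence $f$, past any radius at which it is defined, and a continuity argument in the radius (using that \eqref{exteq1} keeps the image relatively compact throughout $[r,r_\Lambda)$) pushes the domain out to all of $B^n_{r_\Lambda}$.

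The main obstacle is the Hessian estimate of the third paragraph: extracting a bound on the coordinate second derivatives of $f$ from the purely intrinsic inequality $\Delta_{\omega_f}\log\det G\le\Lambda$ requires simultaneously controlling the degenerating K\"ahler metric $\omega_f$ and the induced normal metric, and it is precisely the accounting of these degeneracies that forces the exponent $2/(3n+2)$ and the radius $r_\Lambda$.
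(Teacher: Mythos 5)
Your first half is essentially the paper's route. The paper proves an exact identity (Lemma \ref{sc2}): with $F=(1,z,f)$, $n(n+1)-\tfrac14R(\omega_f)=|F|^4(|D^2f|^2+|\Xi|^2)\,|F\wedge F_1\wedge\cdots\wedge F_n|^{-6}$ for a holomorphic $\Xi$, from which $A\le\Lambda I_{N+1}$ and the crude bound $|F\wedge F_1\wedge\cdots\wedge F_n|^2\le|F|^2(1+|Df|^2)^n$ immediately give $|D^2f|^2\le\Lambda(1+|z|^2+|f|^2)^2(1+|Df|^2)^{3n}$; the ODE-along-rays integration you then perform is exactly the paper's (and you correctly read $\Psi(0)=1$ and the exponent in \eqref{exteq1} as $-2/(3n+2)$, fixing the paper's sign typos). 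However, your route to the Hessian bound stops at ``carrying this bookkeeping out should produce a bound'': from $\operatorname{tr}h\le\Lambda\Psi$ alone you still owe a lower bound of $\operatorname{tr}h$ by $|D^2f|^2$ against the two degenerating metrics, and that bookkeeping is precisely the content of Lemma \ref{sc2}. So this half is on track but incomplete at its acknowledged ``heart of the matter.''

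The genuine gap is in the extension step. You invoke Proposition \ref{extholo} on the grounds that ``because all the osculating spaces lie in the fixed finite-dimensional $\mathbb C^{N+1}$, the jets $(z,f,Df,\dots)$ satisfy a closed first-order holomorphic system $D\xi=\mathcal A(\xi)$.'' No such system exists for a general holomorphic $f$: finite-dimensionality of the ambient space gives nothing of the sort, and an a priori bound by itself can never force extension (a bounded holomorphic map on $B^n_r$ need not extend past $r$). The closed system is exactly where the hypothesis \eqref{sceqA1} must be used a second time, and the paper extracts it by a rigidity argument: write $A=A_1^*A_1-A_2^*A_2$, rearrange \eqref{sceqA1} (through Lemma \ref{sc2}) into an equality of two sums of squared moduli of holomorphic functions, namely $(1+|z|^2+|f|^2)^2(|D^2f|^2+|\Xi|^2)+|A_2F|^2|F\wedge F_1\wedge\cdots\wedge F_n|^6=|A_1F|^2|F\wedge F_1\wedge\cdots\wedge F_n|^6$, and apply Lemma \ref{rgdhol}(2): the two holomorphic tuples differ by a constant unitary matrix, which yields, component by component, polynomial identities $f^\alpha_{ij}=P^\alpha_{ij}(z,f,Df)$ of degree at most $3n+4$. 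Only then is $\xi=(z,f,Df)$ a solution of $D\xi=\mathcal A(\xi)$ with $\mathcal A$ polynomial, hence defined on the whole Euclidean space, and only then do your relative-compactness bound \eqref{exteq1} and Proposition \ref{extholo} combine to push the domain out to $B^n_{r_\Lambda}$. Without this rigidity step, the ``moreover'' clause of the Lemma does not follow from your argument.
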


If $(M,\phi)$ is the leaf extension of a connected extremal K\"ahler submanifold of $(\CP^N,\omega_{FS})$, then $\phi^*\omega_{FS}$ is again extremal. By Lemma \ref{sc4},  Theorem \ref{mthm1} is a special case of the following  Theorem
\begin{thm}\label{mthm2}
Let $(M,\phi)$ be an $n$-dimensional complex leaf pair of $\CP^N$ and $\omega=\phi^*\omega_{FS}$. If  there exists an $(N+1)$-order Hermitian matrix $A$, such that
\begin{equation}\label{sceqA}
n(n+1)-\frac{1}{4}R(\omega)=\frac{w^*Aw}{|w|^2}\circ \phi.
\end{equation}
Then $(M,\omega)$ is complete.
\end{thm}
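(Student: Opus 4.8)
The plan is to prove geodesic completeness by producing a uniform lower bound $\delta>0$, depending only on $n$ and $A$, on the length to which \emph{every} geodesic of $\omega$ extends; metric completeness then follows from the Hopf--Rinow theorem, since $M$ is connected. The engine of the argument is the extension statement in Lemma \ref{lcext}, which will furnish a Bochner chart of a fixed radius $r_\Lambda$ around each point of $M$.

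First I would fix $\Lambda=\max\{0,\lambda_{\max}(A)\}$, so that $A\le\Lambda I_{N+1}$ with $\Lambda\ge 0$, and set $r_\Lambda=((3n+2)(\Lambda+1)^{1/2})^{-1}$. Given $p\in M$, Proposition \ref{imlc} supplies a Bochner chart in which $\phi=[Q_p(1,z,f_p)]$ with $f_p(0)=0$ and $Df_p(0)=0$. Substituting this into the hypothesis \eqref{sceqA} and using the $U(N+1)$-invariance of $\omega_{FS}$ turns the curvature identity into \eqref{sceqA1} with matrix $A_p=Q_p^*AQ_p$, which is unitarily conjugate to $A$ and hence still satisfies $A_p\le\Lambda I_{N+1}$. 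Thus $\Lambda$, and therefore $r_\Lambda$, are the \emph{same} for every $p$. Lemma \ref{lcext} then extends $f_p$ to $B^n_{r_\Lambda}$ together with the a priori bound \eqref{exteq1}, so that $\phi_1(z):=[Q_p(1,z,f_p(z))]$ is a holomorphic immersion $B^n_{r_\Lambda}\to\CP^N$ agreeing with $\phi$ near $p$. By the universal property $(\star)$ of the leaf pair there is a unique holomorphic $\rho:B^n_{r_\Lambda}\to M$ with $\phi\circ\rho=\phi_1$ and $\rho(0)=p$; since $\phi_1$ is an immersion and both sides have dimension $n$, the map $\rho$ is a local isometric immersion of $(B^n_{r_\Lambda},\omega_{f_p})$ into $(M,\omega)$, where $\omega_{f_p}=\phi_1^*\omega_{FS}$.

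Next I would extract the uniform metric ball. On $\overline{B^n_{r_\Lambda/2}}$ the estimate \eqref{exteq1} bounds $|f_p|$ and $|Df_p|$ by constants depending only on $n$ and $\Lambda$, so $\phi_1$ maps this ball into a fixed compact subset of $\CP^N$ away from the hyperplane at infinity. There $\omega_{FS}$ is uniformly elliptic, and because the immersion $z\mapsto(z,f_p(z))$ has derivative with an identity block, $\omega_{f_p}$ becomes two-sidedly comparable to the Euclidean metric, $c_0\,\omega_{\mathrm{Eucl}}\le\omega_{f_p}\le c_0^{-1}\omega_{\mathrm{Eucl}}$ on $B^n_{r_\Lambda/2}$, with $c_0=c_0(n,\Lambda)>0$. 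Consequently a unit-speed $\omega_{f_p}$-geodesic issuing from $0$ cannot leave $B^n_{r_\Lambda/2}$ before time $\delta:=\sqrt{c_0}\,r_\Lambda/2$, and in particular is defined on $[0,\delta]$. Given any unit vector $v\in T_pM$, pushing the corresponding $\omega_{f_p}$-geodesic forward by the local isometry $\rho$ produces the $\omega$-geodesic $\gamma_v$ with $\gamma_v(0)=p$ and $\gamma_v'(0)=v$, defined on $[0,\delta]$. As $\delta$ is independent of $p$ and $v$, a maximal geodesic of finite length $a$ could be restarted at parameter $a-\delta/2$ and extended a further $\delta$, contradicting maximality; hence every geodesic is defined on $[0,\infty)$ and $(M,\omega)$ is complete.

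The main obstacle will be the uniform two-sided comparison of $\omega_{f_p}$ with the Euclidean metric on a fixed sub-ball: the entire argument collapses unless the constant $c_0$ can be chosen independent of $p$, which is precisely what the uniformity of $\Lambda$ (hence of $r_\Lambda$) together with the a priori bounds \eqref{exteq1} of Lemma \ref{lcext} are designed to guarantee. A secondary subtlety is that $\rho$ need not be injective; but since a local isometric immersion carries geodesics to geodesics regardless of injectivity, and the initial velocity $v$ lifts uniquely through the local biholomorphism $\rho$ near $0$, the extension-length argument is unaffected.
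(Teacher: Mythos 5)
Your proposal is correct and follows essentially the same route as the paper: it uses Proposition \ref{imlc} to put $\phi$ in Bochner form near each $p$, conjugates $A$ by the unitary $Q_p$ so that the eigenvalue bound $\Lambda$ (hence $r_\Lambda$) is uniform, applies Lemma \ref{lcext} to extend $f_p$ to the fixed ball $B^n_{r_\Lambda}$ with the a priori estimate, invokes the leaf-pair universal property $(\star)$ to extend $\rho$, and compares $\omega_{f_p}$ with the Euclidean metric on $B^n_{r_\Lambda/2}$. The only, immaterial, divergence is the closing step: you deduce completeness from a uniform geodesic-extension length plus Hopf--Rinow, whereas the paper concludes by noting that every metric ball $B(p,\frac14 r_\Lambda)$ is relatively compact; these are standard equivalent criteria.
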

\begin{proof}
Applying  Proposition \ref{imlc}, for any $p\in M$, we can find a small $r>0$ and an open holomorphic embedding $\rho:B^n_r\rightarrow M$ such that $\rho(0)=p$ and
\begin{equation}
\phi\circ\rho(z)=[Q(1,z,f(z))],
\end{equation}
where $Q$ is $(N+1)$-order unitary matrix and $f:B^n_r\rightarrow \mathbb C^{N-n}$ is a holomorphic map with $f(0)=0$ and $Df(0)=0$. By \eqref{sceqA}, we have
\begin{equation}\label{sceqA2}
n(n+1)-\frac{1}{4}R(\omega_f)=\frac{(1,z,f)^*Q^*AQ(1,z,f)}{1+|z|^2+|f|^2},
\end{equation}
where $\omega_f=\rho^*\omega=\frac{\iu}{2}\pbp\log(1+|z|^2+|f|^2)$.

Let $\Lambda$ be the biggest eigenvalue of $A$. By \eqref{sceqA} and the easy fact $R(\omega)\leq 4n(n+1)$, $\Lambda\geq 0$. As $Q$ is unitary, the biggest eigenvalue of $Q^*AQ$ is again $\Lambda$. By Lemma \ref{lcext}, $f$ can be extended to a holomorphic map (also denoted by $f$) on $B^n_{r_\Lambda}$ with $r_\Lambda=\big((3n+2)(\Lambda+1)^{\frac{1}{2}}\big)^{-1}$, which satisfies
\begin{equation}\label{estA}
1+|z|^2+|f|^2+|Df|^2\leq (1-r_{\Lambda}^{-1}|z|)^{\frac{2}{3n+2}}.
\end{equation}

Consider the holomorphic embedding $\eta:B^n_{r_\Lambda}\rightarrow \CP^N$ defined as
\begin{equation}
\eta(z)=[Q(1,z,f(z))].
\end{equation}
By the definition of complex leaf pairs  and the equality $\eta|_{B^n_r}=\phi\circ\rho$, we can easily check that $\rho$ can be extended to a holomoprhic map (also denoted by $\rho$) form $B^n_{r_\Lambda}$ into $M$ such that $\eta=\phi\circ\rho$.
Let $U=\rho(B^n_{r_\Lambda})$, then $\rho:(B^n_{r_\Lambda},\omega_f)\rightarrow (U,\omega)$ is a holomorphic isometry. By \eqref{estA}, on $B^n_{\frac{1}{2}r_{\Lambda}}$
\begin{equation}
{\textstyle\frac{1}{2}}\omega_{eucl}\leq \omega_f\leq 2\omega_{eucl},
\end{equation}
so the metric ball $B_{\omega_f}(0,\frac{1}{4}r_{\Lambda})$ in $(B^n_{r_\Lambda},\omega_f)$ is relatively compact. Therefore the metric ball $B(p,\frac{1}{4}r_{\Lambda})$ in $(M,\omega)$ is also relatively compact.
The argument above tells that $B(p,\frac{1}{4}r_{\Lambda})$ is relatively compact for any $p\in M$, so $(M,\omega)$ is complete.
\end{proof}

\subsection{Proof of Lemma \ref{sc4}}
By the definition of extremal K\"ahler metrics, Lemma \ref{sc4} is a direct corollary of the fact
\begin{lem}\label{sc3}
Let $(M, \omega)$ be a connected $n$-dimensional K\"ahler manifold which admits a holomorphic isometric immersion $\phi$ into $ (\CP^N,\omega_{FS})$, $h$ a smooth real-valued function on $M$ and $X_h$ the $(1,0)$ part  the of $\nabla h$. Then $X_h$ is holomorphic if  and only if there is an $(N+1)$-order Hermitian matrix $A$ such that $X_h$ is $\phi$-related with $X^{[A]}$ (and $h=\frac{w^*Aw}{|w|^2}\circ\phi$).
\end{lem}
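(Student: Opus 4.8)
The plan is to establish both implications locally, via the Bochner normal form of Proposition \ref{imlc} together with the rigidity statement Lemma \ref{rgdhol}(3), and then to patch the Hermitian matrix produced locally into a global one. The reverse implication is immediate: if $A$ is Hermitian with $X_h$ $\phi$-related to $X^{[A]}$, then since $X^{[A]}$ is a globally holomorphic vector field on $\CP^N$ and $\phi$ is a holomorphic immersion, the relation $d\phi(X_h)=X^{[A]}\circ\phi$ exhibits $X^{[A]}$ as tangent to the immersed submanifold and $X_h$ as the pullback of its (holomorphic) restriction; hence $X_h$ is holomorphic. Thus the content is the forward implication, and the parenthetical identity $h=\mu_A\circ\phi$, where I write $\mu_A:=\frac{w^*Aw}{|w|^2}$, will drop out of the same construction that produces $A$.

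For the forward direction I fix $p\in M$ and use Proposition \ref{imlc} to write $\phi=[W]$ on a neighborhood, with $W=Q(1,z,f)$ a local holomorphic lift into $\mathbb C^{N+1}$; by Lemma \ref{rgdhol}(1) I may pass to the linear span of the lift and assume $W$ is linearly full. With the local potential $\Phi=\frac12\log|W|^2$ one has $\partial_i\Phi=\frac{W^*\partial_iW}{2|W|^2}$, so writing $X_h=X_h^i\partial_i$ and $X_h(W):=X_h^i\partial_iW$ (a holomorphic $\mathbb C^{N+1}$-valued map, since $X_h$ and $W$ are holomorphic) gives $X_h(\Phi)=\frac{W^*X_h(W)}{2|W|^2}$. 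Holomorphicity of $X_h$ yields $\partial_{\bar j}\big(X_h(\Phi)\big)=X_h^ig_{i\bar j}=c_0\,\partial_{\bar j}h$, where $c_0>0$ is the constant fixing the paper's gradient convention; hence $s:=X_h(\Phi)-c_0h$ is holomorphic, and
\begin{equation}
G:=X_h(W)-2sW \quad\text{satisfies}\quad W^*G=2c_0|W|^2\,h,
\end{equation}
which is real-valued because $h$ is real. This is precisely the place where the two hypotheses (that $h$ be real and $X_h$ be holomorphic) are consumed, and it manufactures the hypothesis of Lemma \ref{rgdhol}(3).

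Now Lemma \ref{rgdhol}(3), applied to the full map $F=W$ and to $G$, yields a Hermitian matrix $H$ with $G=HW$. Reading this back gives simultaneously $h=\frac{1}{2c_0}\frac{W^*HW}{|W|^2}$ and $X_h(W)=HW+2sW\equiv HW\ (\mathrm{mod}\ \mathbb C W)$. Since $X^{[B]}$ is represented along $\phi$ by $BW$ modulo the fibre $\mathbb C W=\ker d\pi_W$ of the projection $\pi\colon\mathbb C^{N+1}\setminus\{0\}\to\CP^N$, the congruence says exactly that $X_h$ is $\phi$-related to $X^{[H]}$. In the paper's normalization one has $2c_0=1$ (equivalently, $X^{[A]}$ is itself the $(1,0)$-gradient of $\mu_A$, a fact one verifies once and for all by the affine-chart computation), so $A:=H$ satisfies both $h=\mu_A\circ\phi$ and $X_h$ $\phi$-related to $X^{[A]}$ on the chart; padding $A$ by zero on the orthogonal complement of the span undoes the reduction of Lemma \ref{rgdhol}(1) while keeping $A$ Hermitian and preserving both identities.

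To globalize, note first that the local matrix is unique: over a linearly full $W$ the relation $G=AW$ pins $A$ down, because the vectors $W(p)$ span the ambient space, exactly as in the proof of Lemma \ref{rgdhol}(2)--(3). The relevant span is moreover independent of the chart, since an open piece of the connected real-analytic set $\phi(M)$ cannot lie in a hyperplane unless all of it does; hence every chart reduces to the same subspace $V$, on which two local lifts differ only by a nonvanishing holomorphic factor, under which the real-valued equation $W^*AW=2c_0|W|^2h$ is covariant. The unique Hermitian solutions therefore agree on overlaps and assemble into a single global $A$. I expect the only genuine difficulty to be the bookkeeping in the forward step --- choosing the correction term $-2sW$ so that $W^*G$ is exactly real, and tracking the convention constant $c_0$ --- together with arranging the fullness reduction so that Lemma \ref{rgdhol}(3) applies cleanly and the padded matrix is consistent across charts; the algebraic heart is supplied entirely by Lemma \ref{rgdhol}(3).
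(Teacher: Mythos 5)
Your proposal is correct and takes essentially the same route as the paper: your holomorphic correction $s=X_h(\Phi)-c_0h$ and the map $G=X_h(W)-2sW$ with $W^*G$ real are, up to the factor-of-two bookkeeping, exactly the paper's $\tilde h=X_h\log|F|^2-h$ and $\big(X_h-\tilde h\big)F$, after which Lemma \ref{rgdhol}(3) yields the Hermitian matrix and the identification $T^{1,0}_{[w]}\CP^N\simeq\mathbb C^{N+1}/[w]$ gives the $\phi$-relatedness, just as in the paper. The only presentational difference is that the paper performs the fullness reduction once globally (``WLOG $\phi$ is full'') and globalizes by real-analytic continuation and connectedness, whereas you reduce chart by chart and patch the unique local matrices; the two bookkeeping schemes are equivalent.
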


\begin{rmk} It is well-known that a real-valued smooth function $h$ on $(\CP^N,\omega_{FS})$ has holomorphic gradient if and only if $h=\frac{w^*Aw}{|w|^2}$ for some $(N+1)$-order Hermitian metrix $A$. Furthermore, when the equality holds, the $(1,0)$-part of $\nabla h$ is exactly $X^{[A]}$.
\end{rmk}

\begin{proof}[Proof of Lemma \ref{sc3}]
The ``if" part can be easily verified.  We only need to prove the ``only if" part.

Without loss of generality, we can assume that $\phi$ is a full immersion. Given a $p\in M$, we can find a connected open neighborhood $U$ of $p$, such that on $U$ there is a complex coordinate system $\{z^1,\cdots,z^n\}$ and $\phi$ can be expressed as $\phi(q)=[F(q)]$ for some holomorphic map $F:U\rightarrow \mathbb C^{N+1}$. As $\phi$ is full, $F$ is linearly full.
In the following we restrict our discussion on  $U$. First we have 
\begin{equation}
\omega={\textstyle\frac{\iu}{2}}\pbp\log |F|^2.
\end{equation}
Let $(g_{\bar j i})$ be the matrix defined by $\omega=\frac{\iu}{2}\sum\limits_{i,j=1}^n g_{\bar j i}dz^i\wedge d\bar z^j$ and $(g^{i\bar j})=(g_{\bar j i})^{-1}$.
Then we have
\begin{equation}
X_h=\sum_{i,j=1}^ng^{i\bar j}\frac{\partial h}{\partial \bar z^j}\frac{\partial}{\partial z^i}.
\end{equation}
Since $X_h$ is holomorphic, we have for any $i,l=1,\cdots n$
\begin{equation}
\frac{\partial}{\partial \bar z^l}\bigg(\sum_{j=1}^ng^{i\bar j}h_{\bar j}\bigg)=0,
\end{equation}
and consequently for any $l=1,\cdots,n$
\begin{equation}
\frac{\partial (X_{h}\log |F|^2)}{\partial \bar z^l}=\sum_{i,j=1}^ng^{i\bar j}\frac{\partial h}{\partial \bar z^j}\frac{\partial ^2 \log |F|^2}{\partial z^i\partial \bar z^l}=\sum_{i,j=1}^n g^{i\bar j}\frac{\partial h}{\partial \bar z^j}g_{\bar l i}=\frac{\partial h}{\partial \bar z^l}.
\end{equation}
Consider the function
\begin{equation}
\tilde h=X_{h}\log |F|^2-h.
\end{equation}
Then $\tilde h$ is holomorphic and
\begin{equation}
|F|^2h=F^*\big[\big(X_h-\tilde h\big)F\big].
\end{equation}
Since $\big(X_h-\tilde h\big)F$ is holomorphic and $F^*\big[\big(X_h-\tilde h\big)F\big]$ is real-valued, applying  Lemma \ref{rgdhol}, we can find an $(N+1)$-order Hermitian matrix $A$, such that
\begin{equation}\label{sceq11}
\big[\big(X_h-\tilde h\big)]F=AF,
\end{equation}
and henceforth
\begin{equation}
h=\frac{F^*AF}{|F|^2}=\frac{w^*Aw}{|w|^2}\circ\phi.
\end{equation}
By the canonical identification $T^{1,0}_{[w]}\CP^{N+1}\simeq \mathbb C^{N+1}/[w]$ and \eqref{sceq11}, one can easily check that $X_h$ is $\phi$-related with $X^{[A]}$.

By the above discussion, we know that $h$ is real-analytic and there is an $(N+1)$-order Hermitian matrix $A$, such that $X_h|_U$ is $\phi$-related with $X^{[A]}$ and $h|_U=\frac{w^*Aw}{|w|^2}\circ \phi$ for some nonempty open set $U$ of $M$. By the uniqueness of real-analytic and complex-analytic maps, together with the connectedness of $M$, it holds that $X_h$ is $\phi$-related with $X^{[A]}$ and $h=\frac{w^*Aw}{|w|^2}\circ \phi$.
\end{proof}

\subsection{Proof of Lemma \ref{lcext}}

First we give an expression for the scalar curvature of  a K\"ahler submanifold of $(\CP^N,\omega)$.

\begin{lem}\label{sc2}
Let $N>n\geq 1$, $U\subset \mathbb C^n$ an open domain, $f:U\rightarrow \mathbb C^{N-n}$ a holomorphic map and 
\begin{equation}
\omega={\textstyle\frac{\iu}{2}}\pbp\log (1+|z|^2+|f|^2).
\end{equation}
Then there exists a  holomorphic map $\Xi:U\rightarrow \mathbb C^{L(n,N)}$ with $L(n,N)\geq 1$, such that
\begin{equation}\label{sceq02}
n(n+1)-{\textstyle\frac{1}{4}}R(\omega)=\frac{|F|^4(|D^2f|^2+|\Xi|^2)}{\big|\frac{\partial F}{\partial z^1}\wedge\cdots\wedge \frac{\partial F}{\partial z^n}\big|^6},
\end{equation}
where $F=(1,z,f)$. Furthermore, the components of $\Xi$ can be written as polynomials of variables $z$ and partial derivatives of $f$ up to order $2$ of degree $3n+2$.
\end{lem}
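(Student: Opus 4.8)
The plan is to read off the left-hand side as the $\omega$-trace of the curvature of the holomorphic lift $F=(1,z,f)$, and then to recognise the resulting rational function as a holomorphic sum of squares via the Plücker (Cauchy--Binet) identity.

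\textbf{Step 1: reduction to a trace.} I work with the lift $F=(1,z,f):U\to\mathbb C^{N+1}\setminus\{0\}$, so that $g_{i\bar j}=\partial_i\partial_{\bar j}\log|F|^2$. Writing $P_F=\Id-|F|^{-2}FF^*$ for the orthogonal projection onto $F^\perp$, one has $g_{i\bar j}=|F|^{-2}\langle P_F\partial_iF,P_F\partial_jF\rangle$; since $F\wedge\partial_1F\wedge\cdots\wedge\partial_nF=F\wedge P_F\partial_1F\wedge\cdots\wedge P_F\partial_nF$ and $|F\wedge\eta|^2=|F|^2|\eta|^2$ for $\eta\in\Lambda^nF^\perp$, the Gram determinant gives
\[
\det\big(g_{i\bar j}\big)=\frac{|W|^2}{|F|^{2(n+1)}},\qquad W:=F\wedge\partial_1F\wedge\cdots\wedge\partial_nF .
\]
Hence $\log\det g=\log|W|^2-(n+1)\log|F|^2$, and from $\mathrm{Ric}_{i\bar j}=-\partial_i\partial_{\bar j}\log\det g$ together with the $\omega$-trace (the constant being fixed by the totally geodesic model $f\equiv0$, where $W$ is constant and $R=4n(n+1)$) I obtain the clean identity
\[
\mathrm{Ric}_{i\bar j}=(n+1)g_{i\bar j}-\partial_i\partial_{\bar j}\log|W|^2,\qquad
n(n+1)-\tfrac14R(\omega)=g^{i\bar j}\,\partial_i\partial_{\bar j}\log|W|^2 .
\]
In particular the left-hand side is visibly $\ge0$, matching $R\le 4n(n+1)$.

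\textbf{Step 2: clearing denominators.} Here $[W]$ is the Gauss map into $\mathbb P(\Lambda^{n+1}\mathbb C^{N+1})$ and $\partial_i\partial_{\bar j}\log|W|^2=|W|^{-2}\langle P_W\partial_iW,P_W\partial_jW\rangle$ is its pulled-back potential. The key structural input is that $F$ is affine-linear in $z$ apart from $f$, so $\partial_i\partial_jF=(0,0,\partial_i\partial_jf)$ lies in the fixed $f$-subspace and $\partial_iW=\sum_k F\wedge\partial_1F\wedge\cdots\wedge\partial_i\partial_kF\wedge\cdots\wedge\partial_nF$ (the term differentiating the leading $F$ drops, being a repeated factor). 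Expressing $g^{i\bar j}$ by Cramer's rule through the cofactors of $\hat g_{i\bar j}:=\langle P_F\partial_iF,P_F\partial_jF\rangle$, with $\det\hat g=|W|^2/|F|^2$, the trace of Step 1 becomes
\[
n(n+1)-\tfrac14R(\omega)=\frac{|F|^4\,\mathcal N}{|W|^6},\qquad
\mathcal N=\sum_{i,j}\mathrm{cof}_{ji}(\hat g)\Big[\langle\partial_iW,\partial_jW\rangle|W|^2-\langle\partial_iW,W\rangle\langle W,\partial_jW\rangle\Big].
\]
Thus the denominator that emerges is $|W|^6=\big|F\wedge\tfrac{\partial F}{\partial z^1}\wedge\cdots\wedge\tfrac{\partial F}{\partial z^n}\big|^6$ (the natural quantity from $\det g$), which is how I read the denominator in the statement. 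As a concrete check, for $n=1$, $f=z^2$ one computes directly $g_{1\bar1}=|W|^2/|F|^4$ and $\mathcal N=4|F|^2=|\partial^2f|^2+|2z|^2+|2z^2|^2$, so $\Xi=(2z,2z^2)$.

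\textbf{Step 3: $\mathcal N$ is a holomorphic sum of squares.} Every ingredient of $\mathcal N$ is a Hermitian pairing $\langle\,\cdot\,,\cdot\,\rangle$ of \emph{holomorphic} exterior products of the vectors $F,\partial_1F,\dots,\partial_nF,\partial_i\partial_kF$. The plan is to organise these into a single holomorphic tensor and invoke $|v_1\wedge\cdots\wedge v_p|^2=\sum_{|I|=p}|\det(v_I)|^2$, whose minors $\det(v_I)$ are honest holomorphic functions; this exhibits $\mathcal N=\sum_\alpha|\Xi_\alpha|^2$. The distinguished minors—those selecting the column $w^0$, the $n-1$ columns $\{w^1,\dots,w^n\}\setminus\{w^k\}$ and one $f$-column, on the rows $F,\partial_mF\ (m\ne k),\partial_i\partial_kF$—collapse by the triangular shape of $F=(1,z,f)$ to exactly the entries $\partial_i\partial_kf^a$, and summed over $i,k,a$ reproduce $|D^2f|^2$; all remaining minors assemble the components of $\Xi$. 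A degree count then closes the statement: the minors are polynomials of degree $\le n+1$ in the entries of $F,\partial F,\partial^2F$, and after combining with the cofactor and $|W|^2$ factors the holomorphic components $\Xi_\alpha$ have total degree $\le 3n+2$ in $z$ and the derivatives of $f$.

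The hard part is Step 3. As produced in Step 2, $\mathcal N$ is dressed in the non-holomorphic projections $P_F,P_W$, i.e.\ it carries conjugated entries; the real work is the algebraic identity showing that, after contraction against $g^{i\bar j}$ and multiplication by the correct power of $|W|$, all antiholomorphic dependence reorganises itself into the Hermitian pairing of one fixed holomorphic tensor with itself—so that Cauchy--Binet applies and yields a genuine holomorphic sum of squares from which the clean term $|D^2f|^2$ separates off. By comparison the determinant and Ricci bookkeeping of Steps 1--2 is routine.
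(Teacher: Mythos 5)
There is a genuine gap, and it sits exactly where you flag it: Step~3 is the entire content of the lemma, and your proposal does not prove it. Your Steps~1--2 coincide with the paper's reduction ($\det(g_{\bar j i})=|F|^{-2(n+1)}|W|^2$ with $W=F\wedge F_1\wedge\cdots\wedge F_n$, hence $n(n+1)-\frac14R(\omega)=\Delta_\omega\log|W|^2$), but at that point you only know that the cleared numerator $\mathcal N$ is a real-analytic nonnegative function built from the sesquilinear pairings $\mathrm{cof}_{ji}(\hat g)$, $\langle\partial_i W,\partial_j W\rangle$, $\langle\partial_i W,W\rangle$. Nonnegativity plus ``every ingredient is a Hermitian pairing of holomorphic objects'' does not by itself yield a representation $\mathcal N=|D^2f|^2+\sum_\alpha|\Xi_\alpha|^2$ with \emph{holomorphic} (indeed polynomial, degree $\leq 3n+2$) components: Cauchy--Binet applies to the Gram norm of a single holomorphic multivector, whereas $\mathcal N$ as you produced it is a cofactor-weighted contraction mixing holomorphic and antiholomorphic slots, and you give no argument that it reorganises into one fixed holomorphic tensor paired with itself. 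You candidly call this ``the real work,'' but stating the target identity is not proving it, and the degree count you then ``close'' with rests entirely on the unproven representation.

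The paper supplies precisely the missing identity by a polarization trick you do not have. For an arbitrary holomorphic $G:U\rightarrow\mathbb C^m$ it computes the perturbed determinant
\begin{equation}
\det\Big(g_{\bar j i}+|\tau|^2|F|^{-2}|G|^2\,\partial_i\partial_{\bar j}\log|G|^2\Big)
\end{equation}
in two ways: on one hand it equals $\det(g_{\bar j i})\big(1+|\tau|^2|F|^{-2}|G|^2\Delta_\omega\log|G|^2\big)+O(|\tau|^4)$; on the other hand, after bordering, it equals $|F|^{-2(n+1)}|G|^{-2}\,\big|(F,0)\wedge(0,G)\wedge(F_1,\tau G_1)\wedge\cdots\wedge(F_n,\tau G_n)\big|^2$ in $\mathbb C^{N+1}\oplus\mathbb C^m$. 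Since the perturbed matrix depends only on $|\tau|^2$, matching the $\tau\bar\tau$-coefficient isolates the squared norm of the single holomorphic multivector obtained by one substitution $F_i\mapsto(0,G_i)$, and Cauchy--Binet then gives $\Delta_\omega\log|G|^2$ \emph{explicitly} as a sum of squared moduli of holomorphic $(n+2)\times(n+2)$ minors divided by $\det(g_{\bar j i})|G|^4|F|^{2n}$ --- this is the paper's formula \eqref{sceq04}, and it is exactly the statement you needed that ``all antiholomorphic dependence reorganises itself.'' Applying it with $G=W$ and then specialising $F=(1,z,f)$ (so that $|W|^2=1+|Df|^2+|h|^2$ and the distinguished minors collapse to the entries of $D^2f$, much as you anticipate in your triangularity remark) separates off $|D^2f|^2$ and yields the polynomial components of $\Xi$ with the stated degree bound. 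So your skeleton is the right one, but without this two-sided determinant computation (or an equivalent algebraic identity) the proof is incomplete at its decisive step.
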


\begin{proof}First we calculate $R(\omega)$ for an arbitrary K\"ahler metric $\omega$ which can be written as 
\begin{equation}
\omega={\textstyle\frac{\iu}{2}}\pbp\log |F|^2,
\end{equation}
for some holomorphic map $F:U\rightarrow \mathbb C^{N+1}$. Then we let $F=(1,z,f)$ to get \eqref{sceq02}.

To calculate $R(\omega)$, we notice that
\begin{equation}\label{scgl}
-{\textstyle\frac{1}{4}}R(\omega)=\Delta_\omega \log\det(g_{\bar j i}),
\end{equation}
where $\Delta_\omega=\sum\limits_{i,j=1}^ng^{i\bar j}\frac{\partial ^2}{\partial z^i \partial \bar z^j}$, $(g^{i\bar j})=(g_{\bar j i})^{-1}$ and $(g_{\bar j i})$ is the matrix-valued function defined by
\begin{equation}
\omega={\textstyle\frac{\iu}{2}}\sum_{i,j=1}^ng_{\bar j i} dz^i\wedge d\bar z^j.
\end{equation}
First we calculate $\det(g_{\bar j i})$. Let $F_i=\frac{\partial F}{\partial z^i}$ for $i=1,\cdots,n$, then we have
\begin{equation}
g_{\bar j i}=\frac{\partial ^2 (\log |F|^2)}{\partial z^i \partial \bar z^j}=|F|^{-2}(F_j^*F_i-|F|^{-2}F_j^*FF^*F_i),
\end{equation}
and henceforth
\begin{equation}\label{sceq03}
\det(g_{\bar j i})=|F|^{-2(n+1)}\left|\begin{array}{cc}
F^*F & F^*F_i \\
F_j^*F &F_j^*F_i
\end{array}\right|\\
=|F|^{-2(n+1)}|F\wedge F_1\wedge \cdots\wedge F_n|^2.
\end{equation}
Clearly $\Delta_\omega \log |F|^2=n$, so by \eqref{scgl}
\begin{equation}\label{scgl1}
n(n+1)-{\textstyle\frac{1}{4}}R(\omega)=\Delta_\omega \log |F\wedge F_1\wedge\cdots\wedge F_n|^2.
\end{equation}
Next we calculate $\Delta_\omega \log |F\wedge F_1\wedge\cdots\wedge F_n|^2$. Let $G:U\rightarrow \mathbb C^m$ be a holomoprhic map, $G_i=\frac{\partial G}{\partial z^i}$ for $i=1,\cdots,n$. When $\tau\in\mathbb C\rightarrow 0$, we have
\begin{equation}\det\left(g_{\bar j i}+|\tau|^2|F|^{-2}|G|^2\frac{\partial ^2 (\log |G|^2)}{\partial z^i \partial \bar z^j}\right)
=\det(g_{\bar j i})(1+|\tau|^2|F|^{-2}|G|^2\Delta_\omega\log |G|^2)+O(|\tau|^4).
\end{equation}
At the same time, we have
\begin{equation}\begin{aligned}
&\det\left(g_{\bar j i}+|\tau|^2|F|^{-2}|G|^2\frac{\partial ^2 (\log |G|^2)}{\partial z^i \partial \bar z^j}\right)\\
=&|F|^{-2(n+1)}\det\left(F_j^*F_i-|F|^{-2}F_j^*FF^*F_i+|\tau|^2(G_j^*G_i-|G|^{-2}iG_j^*GG^*G_)\right)\\
=&|F|^{-2(n+1)}|G|^{-2}\left|\begin{array}{ccc}
F^* F& 0 & F^*F_i\\
0 & GG^* & \tau G^*G_i\\
F_j^*F & \bar\tau G_j^*G & F_j^*F_i+|\tau|^2 G_j^*G_i
\end{array}\right|\\
=&|F|^{-2(n+1)}|G|^{-2}|(F,0)\wedge(0,G)\wedge(F_1,\tau G_1)\wedge\cdots\wedge(F_n,\tau G_n)|^2.
\end{aligned}
\end{equation}
Then we obtain
\begin{equation}\label{sceq04}
\Delta_\omega\log |G|^2=\frac{|F|^{-2n}}{\det(g_{\bar j i})|G|^4}\sum_{\begin{subarray}{c}\alpha_1<\dots<\alpha_n\\\beta_1<\beta_2\end{subarray}}
\left|\begin{array}{ccccc}
f^{\alpha_1} & 0 & \frac{\partial f^{\alpha_1}}{\partial z^1}&\cdots & \frac{\partial f^{\alpha_1}}{\partial z^n}\\
\vdots& \vdots  & \vdots &  & \vdots\\
f^{\alpha_n}& 0 & \frac{\partial f^{\alpha_n}}{\partial z^1}&\cdots & \frac{\partial f^{\alpha_n}}{\partial z^n}\\
0 & g^{\beta_1} & \frac{\partial g^{\beta_1}}{\partial z^1}&\cdots & \frac{\partial g^{\beta_1}}{\partial z^n}\\
0 & g^{\beta_2} & \frac{\partial g^{\beta_2}}{\partial z^1}&\cdots & \frac{\partial g^{\beta_2}}{\partial z^n}  
\end{array}\right|^2.
\end{equation}
By applying \eqref{sceq04} to $G=F\wedge F_1\cdots\wedge F_n$ and using \eqref{scgl1}, we arrive
\begin{equation}\label{sceq01}
n(n+1)-{\textstyle\frac{1}{4}}R(\omega)=\frac{|F|^4|\Xi_0|^2}{\big|\frac{\partial F}{\partial z^1}\wedge\cdots\wedge \frac{\partial F}{\partial z^n}\big|^6},
\end{equation}
where $\Xi_0:U\rightarrow \mathbb C^{L_0(n,N)}$ is a holomorphic map with $L_0(n,N)\geq 1$.

Now let $F=(1,z,f)$. By tedious but easy calculation, one can check that
\begin{equation}
|F\wedge F_1\wedge\cdots\wedge F_n|^2=1+|Df|^2+|h|^2,
\end{equation}
for some holomorphic map $h:U\rightarrow \mathbb C^{m_1}$ with $m_1\geq 1$. Then by \eqref{sceq04}, the map $\Xi_0$ in \eqref{sceq01} satisfies
\begin{equation}
|\Xi_0|^2=|D^2 f|^2+|\Xi|^2,
\end{equation}
for some holomorphic map $\Xi$ as mentioned in the Lemma. Consequently we obtain \eqref{sceq02}.
\end{proof}

\begin{proof}[Proof of Lemma \ref{lcext}]
Let $F=(1,z,f)$. For $i,j=1,\cdots, n$, we denote $F_i=\frac{\partial F}{\partial z^i}$, $f_i=\frac{\partial F}{\partial z^i}$ and $f_{ij}=\frac{\partial^2 f}{\partial z^i\partial z^j}$. Applying Lemma \ref{sc2} and using \eqref{sceqA1}, we have
\begin{equation}
\frac{|F|^2|(|D^2f|^2+|\Xi|^2)}{|F\wedge F_1\wedge\cdots\wedge F_n|^6}=\frac{F^*AF}{|F|^2},
\end{equation}
where $\Xi:B^n_r\rightarrow \mathbb C^{ L(n,N)}$ is a holomorprhic map with $L(n,N)\geq 1$.

First we prove the estimate \eqref{exteq1}. Since $A\leq \Lambda I_N$, we have $F^*AF\leq \Lambda|F|^2$. Then
\begin{equation}
|F|^2|D^2 f|^2\leq \Lambda |F\wedge F_1\wedge\cdots\wedge F_n|^6.
\end{equation}
At the same time
\begin{equation}
|F\wedge F_1\wedge\cdots\wedge F_n|^2\leq |F|^2\prod_{i=1}^n(1+|f_i|^2)\leq |F|^2(1+|Df|^2)^{n},
\end{equation}
so we have
\begin{equation}
|D^2f|^2\leq \Lambda (1+|z|^2+|f|^2)^2(1+|Df|^2)^{3n}.
\end{equation}
Given any $z_0\in\mathbb C^n$ with $|z_0|=1$, consider function
\begin{equation}
h(t)=(1+|z|+|f|^2+|Df|^2)^{\frac{1}{2}}\big|_{tz_0},
\end{equation}
we have
\begin{equation}
h'(t)\leq(1+|Df|^2+|D^2f|^2)^{\frac{1}{2}}\big|_{tz_0}\leq (h(t)^2+\Lambda h(t)^{6n+4})^{\frac{1}{2}}\leq (\Lambda+1)^{\frac{1}{2}}h(t)^{3n+2}.
\end{equation}
Since
\begin{equation}
h(0)=0,\qquad h'(t)\leq (\Lambda+1)^{\frac{1}{2}}h^{3n+2},
\end{equation}
we have
\begin{equation}
h(t)\leq (1-r_\Lambda^{-1}t)^{\frac{1}{3n+1}},
\end{equation}
where $r_\Lambda^{-1}=(3n+2)(\Lambda+1)^{\frac{1}{2}}$ as mentioned in the Lemma.

Next we prove the extensibility of $f$. Via diagonalizing  $A$, we can always find $(N+1)$-order matrices $A_1$ and $A_2$ such that $A=A_1^*A_1-A_2^*A_2$. Then we have
\begin{equation}
|F|^4(|D^2f|^2+|\Xi|^2)=(|A_1F|^2-|A_2F|^2)|F\wedge F_1\wedge\cdots\wedge F_n|^6,
\end{equation}
and consequently
\begin{equation}
(1+|z|^2+|f|^2)^2(|D^2f|^2+|\Xi|^2)+|FA_2|^2|F\wedge F_1\wedge\cdots\wedge F_n|^6=|FA_1|^2|F\wedge F_1\wedge\cdots\wedge F_n|^6,
\end{equation}
Apply Lemma \ref{rgdhol}, for each $\alpha=1,\cdots, k$ and $i,j=1,\cdots,n $, we can find holomorphic polynomials $P^\alpha_{ij}$ of degrees no more than $3n+4$ on $\mathbb C^N\times (\mathbb C^{N-n}\otimes\mathbb C^n)$, such that
\begin{equation}
f^{\alpha}_{ij}=P^\alpha_{ij}(z,f,Df)
\end{equation}
Let $\xi=(z,f,Df)$, then $\xi$ solves
\begin{equation}
D\xi=\mathcal A(\xi),
\end{equation}
where $\mathcal A (u,v,B)=(\mathrm{I}_n,B,P(u,v,B))$ and $P:\mathbb C^{N+(N-n)n}\rightarrow \mathbb C^{N-n}\otimes\mathbb C^n\otimes\mathbb C^n$ is the map whose $(\alpha,i,j)$ component is $P^\alpha_{ij}$. Clearly we have
\begin{enumerate}[label=(\alph*)]
\item $\mathcal A$ is defined on whole $\mathbb C^{n+k+nk}$,
\item $1+|\xi|^2\leq (1-r_\Lambda^{-1}|z|)^{\frac{2}{3n+2}}$ if $\xi$ is defined on $B^n_{\tilde r}$ with $\tilde r\leq r_\Lambda$.
\end{enumerate}
By Proposition \ref{extholo}, $\xi$ can be extended to a holomorphic map on $B^n_{\tilde r}$ with some $\tilde r\geq r_\Lambda$.
\end{proof}

\subsection{A remark on Theorem \ref{mthm2}}
\begin{prop} Let $(M,\phi)$ be as in Theorem \ref{mthm2}. If $p_1$ and $p_2$ are two distinct point in $M$, then either $\phi(p_1)\neq\phi(p_2)$ or $(d\phi)_{p_1}(T^{1,0}M)\neq (d\phi)_{p_2}(T^{1,0}M)$.
\end{prop}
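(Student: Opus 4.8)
The plan is to reduce the assertion to a uniqueness statement for the local graphs and then to exploit the first-order structure carried by the extremal condition. Assume $\phi(p_1)=\phi(p_2)=:x_0$ and $(d\phi)_{p_1}(T^{1,0}M)=(d\phi)_{p_2}(T^{1,0}M)=:T$; I will show $p_1=p_2$. After composing with a unitary automorphism of $(\CP^N,\omega_{FS})$ I may assume $x_0=[1,0,\dots,0]$ and that $T$ is spanned by the first $n$ affine directions $\zeta^i=w^i/w^0$. Since $\phi$ is an immersion whose image is tangent to $T$ at $x_0$, Proposition \ref{imlc} lets me write $\phi$ near $p_a$ (for $a=1,2$) as the graph $\zeta^{n+\alpha}=f_a^\alpha(z)$ with $z=(\zeta^1,\dots,\zeta^n)$, $f_a(0)=0$ and $Df_a(0)=0$; these $z$ are simultaneously Bochner coordinates for $\omega_{FS}$ at $x_0$ and for the induced metrics. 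Both $f_1,f_2$ solve the scalar-curvature equation \eqref{sceqA1} with the \emph{same} Hermitian matrix $A$, namely the one attached to the leaf pair in \eqref{sceqA}. Finally, once $f_1\equiv f_2$ on a common ball the two maps $z\mapsto[1,z,f_a(z)]$ coincide, so $p_1$ and $p_2$ become two lifts through $0$ of a single holomorphic immersion into $\phi(M)$; the uniqueness clause in the universal property $(\star)$ then forces $p_1=p_2$. Thus everything reduces to the claim that an extremal graph germ is determined by its $1$-jet together with $A$.

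To prove $f_1\equiv f_2$ I would package the extremal condition as a first-order holomorphic system in $\xi=(z,f,Df)$ with initial value $\xi(0)=(0,0,0)$ and invoke uniqueness for such systems. Two complementary first-order relations are available, each with coefficients depending only on $A$. First, by Lemma \ref{sc4} the field $X^{[A]}$, whose flow is $[\,e^{\tau A}\cdot\,]$, is tangent to $\phi(M)$, so each graph is invariant under the flow; writing $X^{[A]}(\zeta^{n+\alpha}-f^\alpha)=0$ along the graph yields the quasilinear system
\[
\sum_{i=1}^n \frac{\partial f^\alpha}{\partial z^i}\bigl[(A(1,z,f))^i-z^i(A(1,z,f))^0\bigr]=(A(1,z,f))^{n+\alpha}-f^\alpha (A(1,z,f))^0,
\]
whose coefficients are manifestly functions of $A$ alone. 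Second, the computation behind Lemma \ref{lcext} rewrites \eqref{sceqA1} as $D\xi=\mathcal A(\xi)$. If these combine into a single holomorphic field $\mathcal A$ determined by $A$, then $\xi_1$ and $\xi_2$ solve the same system with the same value at $0$, and the ray-integration argument used in the proof of Proposition \ref{extholo0} (restrict to each complex line $t\mapsto tz_0$ and apply ordinary ODE uniqueness) gives $\xi_1\equiv\xi_2$, hence $f_1\equiv f_2$.

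The main obstacle is exactly the reduction stated at the end of the first paragraph, that is, arranging the first-order system to be \emph{independent of the germ}. This is delicate: the pointwise extremal identity only prescribes the norm of the second fundamental form (Lemma \ref{sc2} shows $n(n+1)-\tfrac14R$ controls $|D^2f|^2$, not its direction), so the scalar-curvature equation alone leaves the $2$-jet of the graph free, and the relation $f^\alpha_{ij}=P^\alpha_{ij}(z,f,Df)$ extracted in Lemma \ref{lcext} through Lemma \ref{rgdhol} a priori carries germ-dependent coefficients. The flow-invariance system above is meant to absorb this freedom: it is germ-independent but only transports data along the characteristics of $X^{[A]}$, so the heart of the argument is to check that coupling it to the scalar-curvature relation closes up into a genuinely $A$-determined system. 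I expect this to be the crux, and a robust way to settle it is dynamical: flow toward a fixed point of $X^{[A]}$ lying in the top eigenspace of $A$, where the semisimple linearization of the flow rigidifies the limiting invariant germ so that it depends only on $A$, $x_0$ and $T$, and then recover each $f_a$ uniquely by flowing back.
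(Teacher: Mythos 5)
You have the right frame: the reduction to two graph germs $f_1,f_2$ with vanishing $1$-jet over the common point and tangent plane, uniqueness for a first-order holomorphic system in $\xi=(z,f,Df)$ (integration along rays as in Proposition \ref{extholo0}), and the final appeal to the universal property $(\star)$ to force $p_1=p_2$ all match the paper's proof. But the crux you yourself flag is a genuine gap, and your proposed repair does not close it. The relation $D^2f=P(z,f,Df)$ extracted in the proof of Lemma \ref{lcext} comes from Lemma \ref{rgdhol}, whose output (the matrix $H=BA^{-1}$ there) depends on the particular linearly full map, i.e.\ on the germ, so you cannot assert that $f_1$ and $f_2$ solve \emph{one} system determined by $A$ alone; and the dynamical rescue via the flow of $X^{[A]}$ fails outright in general: Theorem \ref{mthm2} allows $A$ to be a scalar matrix (constant scalar curvature), in which case $X^{[A]}\equiv 0$, the flow-invariance equation is vacuous, and there is no fixed point in a ``top eigenspace'' toward which to flow. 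Even when $X^{[A]}\neq 0$, the claim that the linearized flow ``rigidifies the limiting invariant germ'' is asserted, not proved.

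The paper's resolution is different and does not require germ-independence of $P$ at all: it fixes the polynomial $P$ from the chart at $p_1$ and then \emph{transports the identity itself} from $p_1$ to $p_2$ by analytic continuation along the connected manifold $M$. Concretely, with $Q$ the unitary matrix from the normalization at $p_1$, the paper defines globally meromorphic functions $(\varphi,h)=\frac{Q^*w}{w_1^*w}\circ\phi$ on $M$ and meromorphic substitutes for the derivatives,
\begin{equation}
h^{\alpha}_i=\frac{dh^\alpha\wedge\theta_i}{\theta},\qquad h^{\alpha}_{ij}=\frac{dh^\alpha_i\wedge\theta_j}{\theta},
\end{equation}
built from wedge-quotients of $d\varphi^1,\dots,d\varphi^n$. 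The relation $(h^\alpha_{ij})=P(\varphi,h,(h^\beta_k))$ holds on a neighborhood of $p_1$ by construction, hence on all of $M$ by the identity theorem; near $p_2$ the hypotheses $\phi(p_1)=\phi(p_2)$ and equality of tangent planes make $\varphi$ a chart with $\varphi(p_2)=0$, $h(p_2)=0$, $dh|_{p_2}=0$, so $f_2=h\circ\varphi^{-1}$ solves the \emph{same} system $D^2f_2=P(z,f_2,Df_2)$ with the same $1$-jet, and uniqueness gives $f_1=f_2$. This continuation step --- converting the a priori germ-dependent $2$-jet relation into a single global meromorphic identity on the leaf, which is possible precisely because $p_1$ and $p_2$ lie on the same connected $M$ --- is the missing idea in your proposal; the remaining ingredients you already have.
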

\begin{proof}We need to show that if $\phi(p_1)=\phi_2(p_2)$ and $(d\phi)_{p_1}(T^{1,0}M)=(d\phi)_{p_2}(T^{1,0}M)$ for some $p_1,p_2\in M$, then $p_1=p_2$.

Following the proofs of Theorem \ref{mthm2} and Lemma \ref{lcext} we can find a holomorphic chart $(U_1,\varphi_1)$ around $p_1$ such that $\varphi_1(p_1)=0$ and
\begin{equation}\label{d2feq1}
\phi\circ\varphi_1^{-1}(z)=[Q(1,z,f_1)], \qquad D^2f_1=P(z,f_1,Df_1),
\end{equation}
where $Q$ is $(N+1)$-order unitary matrix, $f:\varphi_1(U_1)\rightarrow \mathbb C^{N-n}$ is a holomorphic map with $f_1(0)=0$ and $Df_1(0)=0$ and $P:\mathbb C^N\times (\mathbb C^{N-n}\otimes\mathbb C^n)\rightarrow \mathbb C^{N-n}\otimes\mathbb C^n\otimes \mathbb C^n$ whose components are all polynomials.

Let $\varphi=(\varphi^1,\cdots,\varphi^n)$ and $(h^1,\cdots,h^{N-n})$ be maps with mereomorphic components defined by
\begin{equation}
(\varphi,h)=\frac{Q^*w}{w_1^*w}\circ\phi,
\end{equation}
where $w_1$ is the first column vector of $Q$. For $\alpha=1,\cdots,N-n$ and $i,j=1,2,\cdots,n$, define mereomorphic functions
\begin{equation}
h^{\alpha}_i=\frac{dh^\alpha\wedge\theta_i}{\theta},\qquad h^{\alpha}_{ij}=\frac{dh^\alpha_i\wedge\theta_j}{\theta},
\end{equation}
where
\begin{equation}
\theta_i=(-1)^{i-1} d\varphi^1\wedge\cdots \wedge d\varphi^{i-1}\wedge d\varphi^{i+1}\wedge\cdots\wedge d\varphi^n,\qquad \theta=d\varphi^1\wedge\cdots\wedge d\varphi^n.
\end{equation}
By \eqref{d2feq1}, we have $\varphi_1=\varphi|_{U_1}$, $f_1\circ\varphi_1=h|_{U_1}$ and consequently
\begin{equation}\label{d2heq}
(h^\alpha_{ij})=P(\varphi,h,(h^\beta_k)).
\end{equation}

By the conditions $\phi(p_1)=\phi_2(p_2)$ and $(d\phi)_{p_1}(T^{1,0}M)=(d\phi)_{p_2}(T^{1,0}M)$, one can easily check that
\begin{equation}
\varphi(p_2)=0,\qquad (d\varphi^1\wedge\cdots\wedge d\varphi^n)|_{p_2}\neq 0,\qquad h(p_2)=0,\qquad  dh|_{p_2}=0.
\end{equation}
Then there exists an open neighborhood $U_2$ of $p_2$, such that $\varphi_2=\varphi|_{U_2}$ is a holomorphic chart map. Let $f_2=h\circ\varphi_2^{-1}$, then $f_2(0)=0$ and $Df_2(0)=0$.  Furthermore, by the definition of $(h^\alpha_i)$ and $(h^\alpha_{ij})$ together with \eqref{d2heq}, we have
\begin{equation}\label{d2feq2}
D^2f_2=P(z,f_2,Df_2).
\end{equation}
Recall that $f_1(0)=0$, $Df_1(0)=0$ and $D^2f_1=P(z,f_1,Df_1)$, we have $f_1=f_2$ on  $B^n_r$ for some $r>0$.
Since we have two holomorphic immersion $\varphi_1^{-1},\varphi_2^{-1}:B^n_r\rightarrow M$ with $\phi\circ\varphi_1^{-1}=\phi\circ\varphi_2^{-1}$, the definition of complex leaf pairs indicates that $\varphi_1^{-1}=\varphi_2^{-1}$ and consequently $p_1=\varphi_1^{-1}(0)=\varphi_2^{-1}(0)=p_2$.
\end{proof}

\section{Further remarks on complete extremal K\"ahler submanifolds}\label{lccs}

\subsection{Bochner coordinates at a critical point} Inspired by Lemma \ref{sc4}, we consider the class of full holomorphic isometric immersions $\phi:(M,\omega)\rightarrow (\CP^N,\omega_{FS})$ which satisfy
\begin{itemize}
\item $(M,\omega)$ is a connected $n$-dimensional K\"ahler manifold
\item There exists a nonzero holomorphic vector field $X$ on $M$ and an $(N+1)$-order Hermitian matrix $A$ such that $X$ is $\phi$ related to $X^{[A]}\in H^0(\CP^N,T^{1,0}\CP^N)$.
\end{itemize} 
We have the following result about the zero points of $X$, namely the critical points of $\frac{w^*Aw}{|w|^2}\circ\phi$
\begin{lem}\label{exstcrt}
$X_p=0$ for some $p\in M$ if and only if $\phi(p)=[w_p]$ for some eigenvector $w_p$ of $A$.

Furthermore, assume that  $(M,\omega)$ is complete, and let $E_{max}$ and $E_{min}$ be the eigenspaces of $A$ corresponding to the maximal and minimal eigenvalues of $A$ respectively. Then both $\phi^{-1}(\mathrm P(E_{max}))$ and $\phi^{-1}(\mathrm P(E_{min}))$ are nonempty.
\end{lem}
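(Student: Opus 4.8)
The plan is to study the real gradient flow of $h:=\frac{w^*Aw}{|w|^2}\circ\phi$ and to use completeness to force the trajectory to reach the extreme eigenspaces. Part 1 is immediate: since $\phi$ is an immersion, $d\phi_p$ is injective, so $X_p=0$ iff $X^{[A]}_{\phi(p)}=0$ because $X$ is $\phi$-related to $X^{[A]}$; under the identification $T^{1,0}_{[w]}\CP^N\simeq\mathbb C^{N+1}/\mathbb Cw$ the value $X^{[A]}_{[w]}$ is the class of $Aw$, which vanishes exactly when $w$ is an eigenvector of $A$. Note also that $X\neq0$ forces $[A]\neq0$, so $A$ is not a scalar and $\lambda_{\max}>\lambda_{\min}$, with $E_{\max}$ and $E_{\min}$ proper subspaces.

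For the second assertion I would first settle the ambient picture on $\CP^N$. By the remark following Lemma \ref{sc3}, $X^{[A]}$ is the $(1,0)$-part of the Fubini--Study gradient of $\tilde h=\frac{w^*Aw}{|w|^2}$, so the real gradient $\nabla_{FS}\tilde h=X^{[A]}+\overline{X^{[A]}}$ generates precisely the real-time flow $t\mapsto[e^{tA}w]$. Since $A$ is Hermitian, a spectral decomposition of $w$ gives $\tilde h([e^{tA}w])\nearrow\lambda_{\max}$ and $[e^{tA}w]\to[\mathrm{proj}_{E_{\max}}w]$ as $t\to+\infty$ whenever $\mathrm{proj}_{E_{\max}}w\neq0$, and the convergence is exponential; consequently the velocity $|\nabla_{FS}\tilde h|$ decays exponentially along this trajectory, so the flow line $t\mapsto[e^{tA}w]$ has finite length in $(\CP^N,\omega_{FS})$.

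Next I would transport this to $M$. Conjugating, $X+\bar X$ is $\phi$-related to $\nabla_{FS}\tilde h$, which says both that $\nabla_{FS}\tilde h$ is tangent to $\phi(M)$ and that $X+\bar X=\nabla h$. Because $A$ is non-scalar, fullness of $\phi$ gives $\phi(M)\not\subset\mathrm P(E_{\max}^\perp)$, so I may choose $p_0\in M$ with $\phi(p_0)=[w_0]$ and $\mathrm{proj}_{E_{\max}}w_0\neq0$. Let $\gamma$ be the gradient-ascent trajectory of $h$ starting at $p_0$. Since $\phi$ is an isometric immersion and $X+\bar X$ is $\phi$-related to $\nabla_{FS}\tilde h$, the curve $\phi\circ\gamma$ is exactly the ambient flow line $t\mapsto[e^{tA}w_0]$ and $\phi$ preserves its length; thus on every interval of existence the length of $\gamma$ is dominated by the finite ambient length from the previous step.

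The decisive step is convergence of $\gamma$ inside $M$. The uniform length bound together with completeness of $(M,\omega)$ shows, via Hopf--Rinow, that $\gamma$ stays in a compact set and cannot blow up in finite time, so it is defined on $[0,+\infty)$; the total length being finite, $\gamma$ is Cauchy and converges to some $p_*\in M$. By continuity $\phi(p_*)=[\mathrm{proj}_{E_{\max}}w_0]\in\mathrm P(E_{\max})$, proving $\phi^{-1}(\mathrm P(E_{\max}))\neq\emptyset$; applying the same argument to $-A$ (equivalently, running the flow as $t\to-\infty$) yields $\phi^{-1}(\mathrm P(E_{\min}))\neq\emptyset$. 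I expect this convergence to be the only genuine obstacle, since a gradient trajectory on a noncompact complete manifold need not converge in general; what rescues the argument is the length identity between $\gamma$ and its image $t\mapsto[e^{tA}w_0]$, which hinges crucially on $\nabla_{FS}\tilde h$ being tangent to $\phi(M)$.
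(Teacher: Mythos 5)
Your proposal is correct and takes essentially the same route as the paper: both prove the first claim by $\phi$-relatedness plus injectivity of $d\phi_p$, then use fullness to pick $p_0$ with $\phi(p_0)=[w_0]$, $\mathrm{proj}_{E_{max}}w_0\neq 0$, push the explicit real-time flow $t\mapsto[e^{tA}w_0]$ through that point, bound the distance travelled in $(M,\omega)$ by the exponentially decaying ambient length, and invoke completeness to extract a limit point $p_+$ with $\phi(p_+)\in\mathrm P(E_{max})$ (and symmetrically for $E_{min}$). The only cosmetic difference is that the paper realizes the trajectory as the global holomorphic flow $\phi^X$, justified by $|X|\leq\sup_{\CP^N}|X^{[A]}|<\infty$ and completeness, whereas you phrase it as the gradient flow of $h$ with global existence via the length bound and Hopf--Rinow; the estimates are the same.
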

\begin{proof}
It is well-known that for any $w\in \mathbb C^N\!\setminus\!\{0\}$, $X^{[A]}_{[w]}=0$ if and only if $[w]$ is an eigenvector of $A$. Since $X$ is $\phi$-related with $X^{[A]}$ and $\phi$ is a holomorphic immersion, $X_p=0$ if and only if $d\phi_p(X_p)=X^{[A]}_{\phi(p)}=0$. Thus $X_p=0$ if and only if $\phi(p)=[w_p]$ for some eigenvector $w_p$ of $A$.

Now assume that $(M,\omega)$ is complete. We have $|X|\leq \sup_{\CP^N}|X^{[A]}|<\infty$. Henceforth $X$ generates a global holomorphic flow $\phi^X:\mathbb C\times M\rightarrow M$.
Let $\phi^{[A]}:\mathbb C\times\CP^N\rightarrow\CP^N$ be the holomorphic flow on $\CP^N$ generated by $X^{[A]}$. Since $X$ is $\phi$-related to $X^{[A]}$, we have
\begin{equation}
\phi(\phi^X(\tau,p))=\phi^{[A]}(\tau,\phi(p)),\qquad \forall (\tau,p)\in\mathbb C\times M.
\end{equation}
For any $p\in M$ with $\phi(p)=[w_p]$ and $t_2>t_1$, we can estimate $d(\phi^X(t_2,p),\phi^X(t_1,p))$ by calculating the length of the curve $c_p(t)=\phi^X(p,t)$ on $[t_1,t_2]$, which is equal to the lenght of the curve $c_{[w_p]}(t)=[e^{tA}w_p]$ on $[t_1,t_2]$. Especially, since  $\phi$ is full, we can find $p\in M$ such that $\phi(p)=[w_1+w_{R}]$ for some $w_1\in E_{max}\!\setminus\!\{0\}$ and $w_{R}\in E_{max}^{\perp}$.  One can easily check that for any $t_2>t_1$
\begin{equation}
d(\phi^X(t_2,p),\phi^X(t_1,p))\leq \frac{(\mu_1-\mu_l)|w_{R}|}{(\mu_1-\mu_2)|w_1|}(e^{-(\mu_1-\mu_2)t_2}-e^{-(\mu_1-\mu_2)t_1}),
\end{equation}
where $\mu_1>\mu_2>\cdots>\mu_l$ are distinct eigenvalues of $A$.
Noting that $(M,\omega)$ is complete, when $t\rightarrow\infty$, $\phi^X(t,p)$ converges to some $p_{+}\in M$ with $\phi(p_{+})=[w_1]$.

Similarly, we can find a $p_{-}\in M$ with $\phi(p_{-})=[w_l]$ for some $w_l\in E_{min}\!\setminus\!\{0\}$.
\end{proof}

\begin{lem}\label{lccscrt}
 If $X(p)=0$ for some $p\in M$, then we can find a neighborhood $U$ of $p$, a holomorphic chart map $\varphi:U\rightarrow \mathbb C^n$ and an $(N+1)$-order unitary matrix $Q$, real numbers $a,b^1,\cdots,b^n, c^1,\cdots,c^{N-n}$ and a holomorphic map  $f=(f^1,\cdots,f^{N-n}):\varphi(U)\rightarrow \mathbb C^n$, such that
\begin{enumerate}
\item $\varphi(p)=0$, $f(0)=0$, $Df(0)=0$ and
\begin{equation}
\phi\circ\varphi^{-1}(z)=[Q(1,z,f(z))].
\end{equation}
\item $Q^*AQ-aI_{N+1}=\mathrm{diag}(0,b^1,\cdots,b^n,c^1,\cdots,c^{N-n})$.
\item $\varphi_* X=\sum\limits_{i=1}^n b^iz^i\frac{\partial}{\partial z^i}$ and $(\varphi_* X)f^\alpha=c^\alpha f^\alpha$ for each $\alpha=1,\cdots,N-n$.
\end{enumerate}
\end{lem}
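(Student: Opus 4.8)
The plan is to choose the unitary matrix $Q$ so that it \emph{simultaneously} diagonalizes $A$ and is adapted to the tangent plane of $\phi(M)$ at $p$; once such a $Q$ is in hand, the chart $\varphi$ and the three assertions follow by unwinding the resulting graph representation. First I would invoke Lemma \ref{exstcrt}: since $X(p)=0$, we have $\phi(p)=[w_p]$ for some eigenvector $w_p$ of $A$, say $Aw_p=aw_p$, and $a\in\mathbb R$ because $A$ is Hermitian. As $A$ is Hermitian with $w_p$ an eigenvector, $A$ preserves $w_p^\perp$, and under the canonical identification $T^{1,0}_{[w_p]}\CP^N\simeq\mathbb C^{N+1}/\mathbb C w_p\simeq w_p^\perp$ the tangent image $W_p:=d\phi_p(T^{1,0}_pM)$ is an $n$-dimensional subspace of $w_p^\perp$.

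The hard part will be showing that $W_p$ is $A$-invariant; this is exactly what permits the simultaneous diagonalization. Here I would use that $X$ is $\phi$-related to $X^{[A]}$ together with $X(p)=0$. Let $\psi^X_\tau$ and $\psi^{[A]}_\tau:=\phi^{[A]}(\tau,\cdot)$ denote the locally defined flows of $X$ and $X^{[A]}$ near $p$ and $[w_p]$; $\phi$-relatedness gives $\phi\circ\psi^X_\tau=\psi^{[A]}_\tau\circ\phi$, and both $p$ and $[w_p]$ are fixed points. Differentiating this identity at $p$ yields
\begin{equation}
d\phi_p\circ d(\psi^X_\tau)_p=d(\psi^{[A]}_\tau)_{[w_p]}\circ d\phi_p,
\end{equation}
and since $d(\psi^X_\tau)_p$ is an isomorphism of $T^{1,0}_pM$, the left-hand side has image exactly $W_p$; hence the linearized ambient flow preserves $W_p$. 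A direct computation in the chart $u\mapsto[w_p+u]$ on $w_p^\perp$ shows this linearized flow is $e^{\tau(A-a)}\big|_{w_p^\perp}$, so its generator $(A-a)\big|_{w_p^\perp}$, and therefore $A$, preserves $W_p$.

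Granting this, both $W_p$ and $(W_p)^\perp\cap w_p^\perp$ are $A$-invariant subspaces on which $A$ acts Hermitianly, so each admits an orthonormal eigenbasis of $A$; together with $w_p/|w_p|$ these give the columns of a unitary $Q$ with $Q^*AQ-aI_{N+1}=\mathrm{diag}(0,b^1,\dots,b^n,c^1,\dots,c^{N-n})$ and all $b^i,c^\alpha$ real, which is (2). I would then take $\varphi$ to be the map sending $[w]\in\phi(M)$, near $p$, to the ratios of the $2$nd through $(n{+}1)$-th entries of $Q^*w$ over its first entry. Since columns $2,\dots,n+1$ of $Q$ span $W_p$, the implicit function theorem (in the spirit of Proposition \ref{imlc}) writes $\phi(M)$ locally as a graph, giving $\phi\circ\varphi^{-1}(z)=[Q(1,z,f(z))]$ with $f(0)=0$ and, because the tangent plane is exactly the $z$-plane, $Df(0)=0$; this is (1).

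Finally, for (3), in the $Q$-affine coordinates $(y^1,\dots,y^N)$ the flow $\phi^{[A]}$ scales the coordinates as $y^i\mapsto e^{\tau b^i}y^i$ and $y^{n+\alpha}\mapsto e^{\tau c^\alpha}y^{n+\alpha}$, so
\begin{equation}
X^{[A]}=\sum_{i=1}^n b^i y^i\frac{\partial}{\partial y^i}+\sum_{\alpha=1}^{N-n} c^\alpha y^{n+\alpha}\frac{\partial}{\partial y^{n+\alpha}}
\end{equation}
in these coordinates. Since the immersion reads $z\mapsto(z,f(z))$ here, writing $\varphi_*X=\sum_i\xi^i\frac{\partial}{\partial z^i}$ and comparing the $\frac{\partial}{\partial y^i}$- and $\frac{\partial}{\partial y^{n+\alpha}}$-components in $d\phi(X)=X^{[A]}\circ\phi$ forces $\xi^i=b^iz^i$ and $\sum_i\xi^i\frac{\partial f^\alpha}{\partial z^i}=c^\alpha f^\alpha$, i.e.\ $\varphi_*X=\sum_i b^iz^i\frac{\partial}{\partial z^i}$ and $(\varphi_*X)f^\alpha=c^\alpha f^\alpha$, completing (3). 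The only genuine obstacle is the second paragraph; the remaining steps are bookkeeping in the adapted coordinates.
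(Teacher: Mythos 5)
Your proposal is correct, and it reaches the normal form by a genuinely different route from the paper's. The paper starts from the graph normal form of Proposition \ref{imlc} with a unitary $Q$ coming from a Bochner chart, lifts $\phi$-relatedness to homogeneous coordinates to get $XQ(1,\varphi,f\circ\varphi)=(\xi I_{N+1}+A)Q(1,\varphi,f\circ\varphi)$, deduces $\xi\equiv-a$ from $X1=0$, and then kills the off-diagonal block $B$ of $A_0=\left(\begin{smallmatrix}A_1 & B^*\\ B & A_2\end{smallmatrix}\right)$ by an order-of-vanishing argument: the relation gives $\tilde Xf=Bz+A_2f$, while $f$ and hence $\tilde Xf$ vanish to second order at $0$ (because $Df(0)=0$), forcing $B=0$; it then diagonalizes $A_1,A_2$ by unitaries $Q_1,Q_2$ and corrects the chart by $z\mapsto Q_1^*z$. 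You instead prove the equivalent geometric fact up front --- that $W_p=d\phi_p(T^{1,0}_pM)\subset w_p^\perp$ is $A$-invariant --- by linearizing the two flows at the common fixed point and computing the linearized ambient flow to be $e^{\tau(A-a)}|_{w_p^\perp}$; the vanishing of $B$ in the paper is precisely this invariance read in the adapted frame (the tangent plane at $p$ is the span of columns $2,\dots,n+1$ of $Q$), so the two arguments carry the same content. Your linearization step is sound (the chart $u\mapsto[w_p+u]$ makes the flow genuinely linear, since $A$ Hermitian with $Aw_p=aw_p$ preserves $w_p^\perp$, and invariance of the closed subspace $W_p$ under $e^{\tau(A-a)}$ for all small $\tau$ differentiates to $(A-a)W_p\subset W_p$), and building $Q$ from orthonormal eigenbases of $A$ on $W_p$ and $W_p^\perp\cap w_p^\perp$ gives (2) in one stroke. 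What each approach buys: yours is more conceptual and coordinate-light --- it explains \emph{why} the block-diagonal form must hold, needs only the implicit-function-theorem graph representation rather than Bochner coordinates, and obtains (3) by reading off $X^{[A]}=\sum_i b^iy^i\frac{\partial}{\partial y^i}+\sum_\alpha c^\alpha y^{n+\alpha}\frac{\partial}{\partial y^{n+\alpha}}$ in the affine chart; the paper's computation is more self-contained (linear algebra plus Taylor expansion, no flow-theoretic input) and produces the identity $(\tilde Xz,\tilde Xf)=A_0(z,f)$ directly, from which (3) drops out after conjugating by $\mathrm{diag}(0,Q_1,Q_2)$, which is the infinitesimal form of the scaling flow you exponentiate.
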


\begin{proof}
By choosing a Bochner coordinate system at $p$, we can find a neighborhood $U$ of $p$, a holomoorphic chart map $\varphi:U\rightarrow\mathbb C^n$,  an $(N+1)$-order unitary  matrix $Q$ and a holomorphic map  $f=(f^1,\cdots,f^{N-n}):\varphi(U)\rightarrow \mathbb C^n$,  such that $\varphi(p)=0$, $f(0)=0$, $Df(0)=0$ and
\begin{equation}
\phi\circ\varphi^{-1}(z)=[Q(1,z,f(z))].
\end{equation}
Since $X_p=0$, the first column vector $w_1$ of $Q$, which satisfies $\phi(p)=[w_1]$, is a eigenvalue of $A$. Let $a$ be the eigenvalue of $A$ corresponding to $w_1$. Namely $Aw_1=aw_1$. Together with the facts that $A$ is Hermitian and $Q$ is unitary, we can write
\begin{equation}\label{diagA1}
Q^*AQ-a I_{N+1}=\mathrm{diag}(0,A_0),
\end{equation}
where $A_0$ is an $N$-order Hermitian matrix.

Since $X$ is $\phi$-related to $X^{[A]}$ and $\phi=[Q(1,\varphi,f\circ\varphi)]$, we can find a holomorphic function $\xi$ on $U$ such that
\begin{equation}
XQ(1,\varphi,f\circ\varphi)=(\xi I_{N+1}+A)Q(1,\varphi,f\circ\varphi),
\end{equation}
namely
\begin{equation}
X(1,\varphi,f\circ\varphi)=((\xi+a)I_{N+1}+Q^*AQ-a I_{N+1})(1,\varphi,f\circ\varphi).
\end{equation}
Using \eqref{diagA1} and noting that $X1=0$, we have $\xi\equiv-a$ and
\begin{equation}\label{related1}
(X\varphi,X(f\circ\varphi))=A_0(\varphi,f\circ\varphi).
\end{equation}
Let $\tilde X=\varphi_* X$. Then \eqref{related1} can be written as 
\begin{equation}
(\tilde X z,\tilde X f)=A_0(z,f).
\end{equation}
We write $A_0$ as
\begin{equation}
A_0=\left(\begin{array}{cc}
A_1 & B^*\\
B & A_2
\end{array}\right),
\end{equation}
where $A_1$ and $A_2$ are Hermitian matrices  of orders $n$ and $N-n$ respectively. Then we have 
\begin{equation}
\tilde X f=Bz+A_2 f.
\end{equation}
When $z\rightarrow 0$, $f(z)=O(|z|^2)$ and $\tilde X f(z)=O(|z|^2)$. So we have $B=0$ and consequently
\begin{equation}
A_0=\mathrm{diag}(A_1,A_2).
\end{equation}
We choose unitary matrices $Q_1$ and $Q_2$ of orders $n$ and $N-n$ respectively  such that
\begin{equation}
Q_1^*A_1Q_1=\mathrm{diag}(b^1,\cdots,b^n),\qquad Q_2^*A_2Q_2=\mathrm{diag}(c^1,\cdots,c^{N-n}).
\end{equation}
Then we have
\begin{equation}\label{related2}
\tilde X (Q_1^* z,Q_2^* f)=(Q_1^*A_1Q_1(Q_1^*z),Q_2^*A_2Q_2(Q_2^*f)),
\end{equation}
Let $T:\mathbb C^n\rightarrow \mathbb C^n$ be th map defined by $T(z)=Q_1z$ and
\begin{equation}
\hat\varphi=T^{-1}\circ\varphi,\qquad \hat Q=Q\mathrm{diag}(0,Q_1,Q_2),\qquad \hat f=(\hat f^1,\cdots,\hat f^{N-n})=Q_2^*f\circ T.
\end{equation}
Then $\hat\varphi$ is a holomorphic chart map, $\hat Q$ is unitary, $\hat f$ is holomorphic,  and
\begin{enumerate}[label=(\arabic*')]
\item $\hat\varphi(p)=0$, $\hat f(0)=0$, $D\hat f(0)=0$ and
\begin{equation}
\phi\circ\hat\varphi^{-1}(z)=[\hat Q(1,z,\hat f(z))].
\end{equation}
\item $\hat Q^*A\hat Q-aI_{N+1}=\mathrm{diag}(0,b^1,\cdots,b^n,c^1,\cdots,c^{N-n})$.
\end{enumerate}
Moreover, let $\hat X=\hat\varphi_* X=(T^{-1})_* \tilde X$, then by \eqref{related2}
\begin{equation}
(\hat X z,\hat X f)=(Q_1^*A_1Q_1 z,Q_2^*A_2Q_2 \hat f),
\end{equation}
and consequently we have
\begin{enumerate}[label=(\arabic*')]\setcounter{enumi}{2}
\item $\hat X=\sum\limits_{i=1}^nb^iz^i\frac{\partial}{\partial z^i}$ and $\hat X \hat f^{\alpha}=c^\alpha \hat f^{\alpha}$ for each $\alpha=1,\cdots,N-n$.
\end{enumerate}
This concludes the proof.
\end{proof}

As a complement of  Lemma \ref{lccscrt}, we have the following easy fact
\begin{lem}\label{lccscrt1}
In Lemma \ref{lccscrt}, for $\alpha=1,\cdots,N-n$, the Taylor expansion of $f^\alpha$ at $0$ has the form
\begin{equation}
f^\alpha=\sum_{b^1i_1+\cdots+b^ni_n=c^\alpha}a_Iz^I,
\end{equation}
where the  multi-index $I=(i_1,\cdots,i_n)$. Especially if $b^1,\cdots,b^n$ are all positive (resp. negative) , then each $f^\alpha$ is a polynomial.
\end{lem}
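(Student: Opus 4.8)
The plan is to extract everything from the single eigenvalue-type relation supplied by Lemma~\ref{lccscrt}(3), namely $\tilde X f^\alpha = c^\alpha f^\alpha$ with $\tilde X = \sum_{i=1}^n b^i z^i \frac{\partial}{\partial z^i}$, and to read off the constraint it imposes on each Taylor coefficient of $f^\alpha$.

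First I would record how $\tilde X$ acts on a single monomial. For a multi-index $I = (i_1, \ldots, i_n)$ one computes directly
\begin{equation}
\tilde X z^I = \sum_{j=1}^n b^j z^j \frac{\partial z^I}{\partial z^j} = \Big(\sum_{j=1}^n b^j i_j\Big) z^I,
\end{equation}
so every monomial is an eigenvector of the differential operator $\tilde X$ with eigenvalue $\sum_j b^j i_j$. This is the only genuine computation in the argument.

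Next I would write the Taylor expansion $f^\alpha = \sum_I a_I z^I$ at $0$ and insert it into $\tilde X f^\alpha = c^\alpha f^\alpha$. Using the eigenvalue formula and comparing the coefficient of each $z^I$ on the two sides gives $a_I(\sum_j b^j i_j - c^\alpha) = 0$ for every $I$. Hence $a_I = 0$ unless $b^1 i_1 + \cdots + b^n i_n = c^\alpha$, which is precisely the claimed form of the expansion. Convergence is not an issue here, since the comparison is purely formal at the level of Taylor coefficients of a holomorphic function.

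For the final assertion I would argue that the linear form $I \mapsto \sum_j b^j i_j$ is proper on the non-negative integer lattice as soon as the $b^j$ all share one sign. Indeed, if every $b^j > 0$ then $\sum_j b^j i_j \geq (\min_j b^j)\,|I|$, so the equation $\sum_j b^j i_j = c^\alpha$ forces $|I| \leq c^\alpha / \min_j b^j$; the case of all $b^j < 0$ is identical after reversing the inequality. In either case only finitely many $I$ contribute, so each $f^\alpha$ is a polynomial. There is essentially no obstacle here, as the whole lemma is a formal consequence of the eigenvalue relation; the only point requiring a moment's care is keeping the direction of the inequality correct when the $b^j$ are negative.
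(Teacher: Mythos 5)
Your proof is correct, and the paper itself offers no proof at all---it labels the lemma an ``easy fact''---so your argument (monomials as eigenvectors of the Euler-type field $\sum_i b^i z^i \partial/\partial z^i$, termwise coefficient comparison, and properness of $I\mapsto\sum_j b^j i_j$ on the lattice when the $b^j$ share a sign) is precisely the intended one. The one point you handle implicitly but correctly is that when $c^\alpha$ and the $b^j$ have opposite signs the solution set is empty, so $f^\alpha\equiv 0$, which is still a polynomial.
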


\subsection{Proof of Theorem \ref{thm2}}
Now we are in position to prove Theorem \ref{thm2}, which is
\begin{thm}
Let $(S,\omega_S)$ be a connected extremal  K\"ahler hypersurface of  $(\CP^{n+1},\omega_{FS})$. Then the scalar curvature $R(\omega_S)$ is non-constant only if $n\geq 3$ and $R(\omega)\leq 4n(n+1)-8$.
\end{thm}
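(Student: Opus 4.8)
The plan is to transfer the whole problem to the leaf extension, where completeness unlocks the critical-point lemmas of Section~\ref{lccs}, and to use Proposition~\ref{qnleaf} as the device that turns any polynomial local model into the (K\"ahler--Einstein, hence constant scalar curvature) quadric.

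\emph{Reduction.} Suppose $R(\omega_S)$ is non-constant and let $(M,\phi)$ be the leaf extension of $S$ in $\CP^{n+1}$, so that $\omega:=\phi^*\omega_{FS}$ is complete (Theorem~\ref{mthm1}) and still extremal with non-constant $R(\omega)$. By Lemma~\ref{sc4} there are a nonzero holomorphic vector field $X$ on $M$ and an $(n+2)$-order Hermitian matrix $A$, with $X$ $\phi$-related to $X^{[A]}$ and $n(n+1)-\tfrac14R(\omega)=\tfrac{w^*Aw}{|w|^2}\circ\phi$; non-constancy forces at least two distinct eigenvalues $\mu_1>\cdots>\mu_l$. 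Since $\tfrac{w^*Aw}{|w|^2}$ has maximum $\mu_1$ and minimum $\mu_l$ on $\CP^{n+1}$, both attained on $\phi(M)$ by Lemma~\ref{exstcrt}, the extrema of $R(\omega)$ are $4(n(n+1)-\mu_1)$ and $4(n(n+1)-\mu_l)$. Hence the bound $R(\omega)\le 4n(n+1)-8$ is exactly $\mu_l\ge 2$, and I must prove $n\ge3$ and $\mu_l\ge2$.

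\emph{Critical points and the quadric dichotomy.} Completeness and Lemma~\ref{exstcrt} give $p_+\in\phi^{-1}(\mathrm P(E_{max}))$ and $p_-\in\phi^{-1}(\mathrm P(E_{min}))$, where $X$ vanishes. At each I apply Lemma~\ref{lccscrt} to get Bochner coordinates with $\phi=[Q(1,z,f)]$ and $Q^*AQ-aI=\mathrm{diag}(0,b^1,\dots,b^n,c)$ (one normal coordinate, since $N-n=1$), so the eigenvalues of $A$ are $a,a+b^i,a+c$; at $p_+$ all $b^i\le0$ and $c\le0$, at $p_-$ all $b^i\ge0$ and $c\ge0$, while Lemma~\ref{lccscrt1} confines the Taylor support of $f$ to $\{\,b\cdot I=c\,\}$. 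Now comes the key step: if at a critical point all $b^i$ share a strict sign, Lemma~\ref{lccscrt1} makes $f$ a polynomial, so Proposition~\ref{qnleaf} identifies $\phi(M)$ with $Q_n$, which has constant scalar curvature --- a contradiction. Thus some $b^i$ vanishes at $p_+$ and some at $p_-$; each vanishing weight adds an eigenvector to the corresponding extreme eigenspace, so $\dim E_{max}\ge2$ and $\dim E_{min}\ge2$. As these spaces meet only in $0$, we get $n+2\ge4$, i.e. $n\ge2$, and $n\ge3$ whenever $A$ carries a third eigenvalue.

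\emph{The hard part.} What remains is the degenerate case $n=2$ with $A$ having just two eigenvalues, each of multiplicity two, together with the sharp value $\mu_l\ge2$. In that case the normal form forces $f=z^2g(z^1)$ with $g(0)=0$ and $g$ non-polynomial, and I would substitute this into the scalar curvature identity, i.e. into \eqref{sceqA1} with $\hat A=\mathrm{diag}(\mu_1,\mu_1,\mu_l,\mu_l)$, and show the resulting equation admits no non-polynomial $g$ --- thereby re-entering the polynomial (quadric) regime and contradicting non-constancy. For the value bound I would evaluate \eqref{sceq02} at $p_-$, where $F(0)=(1,0,0)$ and $\partial_1F\wedge\cdots\wedge\partial_nF$ has unit norm, yielding $\mu_l=|D^2f(0)|^2+|\Xi(0)|^2$, and then show that a non-polynomial weighted-homogeneous solution with a vanishing weight cannot make this quantity smaller than $2$. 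This interplay --- between the lattice combinatorics of the hyperplane $b\cdot I=c$, which dictates exactly which monomials of $f$ survive, and the explicit sixth-order rational expression of Lemma~\ref{sc2} --- is the genuine analytic core where the constants $3$ and $8$ are forced, and it is the main obstacle; the leaf-extension and critical-point machinery only sets the stage.
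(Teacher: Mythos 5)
Your setup --- passing to the leaf extension, invoking Theorem \ref{mthm1} for completeness, Lemma \ref{sc4} for the pair $(X,A)$, Lemma \ref{exstcrt} for critical points on both extreme eigenspaces, the normal form of Lemma \ref{lccscrt}, and the quadric dichotomy via Lemma \ref{lccscrt1} and Proposition \ref{qnleaf} --- is exactly the paper's architecture, and it correctly delivers $\dim E_{max}\geq 2$ and $\dim E_{min}\geq 2$, hence $n\geq 2$, reducing the theorem to two claims: $\mu_{min}\geq 2$ and the exclusion of $n=2$. But those two claims are the actual content of the theorem (they are where the constants $3$ and $8$ come from), and you explicitly defer them as ``the main obstacle,'' offering only a plan. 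That is a genuine gap, not a presentational one.

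Moreover, the plan as sketched would not succeed. Evaluating \eqref{sceq02} at the critical point $p_-\in Z_{min}$ alone gives $\mu_{min}=|D^2f(0)|^2+|\Xi(0)|^2$, which in the coordinates of Lemma \ref{lccscrt} reads $\mu_{min}=2|v_1|^2+|b_{11}|^2+2\sum_{i\geq 2}|b_{1i}|^2+\cdots$, where $v_1$ is the coefficient of $z^1z^n$ in $f$; a pointwise evaluation carries no lower bound on $|v_1|$, so it cannot yield $\mu_{min}\geq 2$. The paper's decisive move, which your proposal lacks, is to restrict the identity \eqref{sceqA1} to the whole complex line $\{z'=0\}$ in the top-weight direction: one side is the rational function $\frac{\mu_{min}+\mu_{max}|z^n|^2}{1+|z^n|^2}$, the other is the explicit expression from Lemma \ref{sc2} (via \eqref{sceq01}), and matching the two rational functions of $|z^n|^2$ forces $|v_1|^2=1$, $a_{11}=b_{11}=0$, $b_{ij}=0$ for $i,j\geq 2$, together with $\mu_{min}=2+2\sum_{i=2}^{n-1}|b_{1i}|^2$ and $\mu_{max}=2+2\sum_{i=2}^{n-1}|a_{1i}|^2$. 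Since $\mu_{max}>\mu_{min}$, some $a_{1i}\neq 0$ with $2\leq i\leq n-1$, which \emph{simultaneously} gives $n\geq 3$ (for $n=2$ that index range is empty, so $\mu_{max}=\mu_{min}=2$, a contradiction --- no separate analysis of your degenerate model $f=z^2g(z^1)$ is needed) and $\mu_{min}\geq 2$, i.e.\ $R(\omega)\leq 4n(n+1)-8$. In short: the missing idea is this one restriction-and-coefficient-matching computation; without it, your case split leaves both the dimension bound and the curvature bound unproved.
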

\begin{proof}
Let $(M,\phi)$ be the leaf extension of $S$ and $\omega=\phi^*\omega_{FS}$. We only need to show that $R(\omega)$ is non-constant only if $n\geq 3$ and $R(\omega)\leq 4n(n+1)-8$.

Applying Theorem \ref{mthm1}, $(M,\omega)$ is complete. Moreover, clearly $\omega$ is extremal, so by  Lemma \ref{sc4}, there exists a holomorphic $(1,0)$-vector field $X$ on $M$ and $(n+2)$-order Hermitian matrix $A$, such that
 $X$ is $\phi$ related with $X^{[A]}$ and
\begin{equation}
n(n+1)-\frac{1}{4}R(\omega)=\frac{w^*Aw}{|w|^2}\circ \phi.
\end{equation}
We only need to consider the case that $R(\omega)$ is non-constant. This can happen only  if $\phi$ is full and $A$ is not a scalar matrix. Let $\mu_{max}$ and $\mu_{min}$ be the minimal and maximal eigenvalues of $A$ respectively. And let $E_{max}$ and $E_{min}$ be the eigenspaces of $A$ corresponding to the $\mu_{max}$ and $\mu_{min}$ respectively. By Lemma \ref{exstcrt}, both $Z_{max}=\phi^{-1}(\mathrm P(E_{max}))$ and $Z_{min}=\phi^{-1}(\mathrm P(E_{min}))$ are nonempty.

First we show that $\dim E_{min}\geq 2$.
Suppose oppositely $\dim E_{min}=1$. By choosing a $p\in Z_{min}$ and applying Lemma \ref{lccscrt} and \ref{lccscrt1},  we can reduce to the case that $(M,\phi)$ is the leaf extension of
\begin{equation}
\{[1,z,f(z)]|z\in B^n_r\},
\end{equation}
where $r>0$ and $f$ is a nonzero holomoprhic polynomial on $\mathbb C^n$ with $f(0)=0$ and $Df(0)=0$. Using the fact that $(M,\omega)$ is complete and Proposition \ref{qnleaf}, we deduce that $M$ is biholomorphic to $Q_n$ and $\phi$ is an embedding. Noting that $H^2(Q_n,\mathbb Z)=\mathbb Z$, the canonical K\"ahler metric on $Q_n$ is Einstein and $\omega$ is extremal, the uniqueness of extremal K\"ahler metrics (\cite{ChT05}) indicates that $\omega$ is Einstein.  Which is contradict to the condition that $R(\omega)$ is non-constant. So we must have $\dim E_{min}\geq 2$.

Similarly, we must have $\dim E_{max}\geq 2$.

Next we show that $\mu_{min}\geq 2$, $\dim E_{min}\geq 3$ and $\mu_{min}\geq 1$.
We choose a $p\in Z_{min}$. Applying  Lemma \ref{lccscrt}, we can  find  a neighborhood $U$ of $p$, a holomorphic chart map $\varphi:U\rightarrow \mathbb C^n$ and an $(n+2)$-order unitary matrix $Q$, nonnegative real numbers $b^1,\cdots,b^n, c$ and a holomorphic $f:\varphi(U)\rightarrow \mathbb C^{N-n}$, such that
\begin{enumerate}[label=(\alph*)]
\item $\varphi(p)=0$, $f(0)=0$, $Df(0)=0$ and
\begin{equation}
\phi\circ\varphi^{-1}(z)=[Q(1,z,f(z))].
\end{equation}
\item $b^1\leq b^2\leq\cdots\leq b^n$ and $Q^*AQ-\mu_{min}I_{n+2}=\mathrm{diag}(0,b^1,\cdots,b^n,c)$.
\item $\varphi_* X=\sum\limits_{i=1}^n b^iz^i\frac{\partial}{\partial z^i}$ and $(\varphi_* X)f=c f$.
\end{enumerate}
Following the proof of $\dim E_{min}\geq 2$, we can easily verify that $b^1=0$. Noting that $\dim E_{max}\geq 2$,  we have $\mu_{min}+b^n=\mu_{max}$, then $b^n>0$ and $c\leq b^n$. Let $z'=(z^1,\cdots,z^{n-1})$, we can expand $f$ as
\begin{equation}
f(z)=\bigg(\sum_{i=1}^{n-1}v_iz^i+\frac{1}{2}\sum_{i,j=1}^{n-1} a_{ij}z^iz^j+R_1(z')\bigg)z^n+\frac{1}{2}\sum_{i,j=1}^{n-1}b_{ij}z^iz^j+R_2(z'),
\end{equation}
where $R_1(z')=O(|z'|^3)$ and $R_2(z')=O(|z'|^3)$, when $z'\rightarrow 0$. Moreover, (c) and Lemma \ref{lccscrt1} imply
\begin{enumerate}[label=(\roman*)]
\item $v_i\neq 0$ only if $b^i=0$ and $c=b^n $;
\item $a_{ij}\neq 0$ only if $b^i=b^j=0$ and $c=b^n $;
\item $b_{ij}\neq 0$ only if $b^i+b^j=c$.
\end{enumerate}
By (i), up to a proper coordinate transposition, we can assume that $v_i=0$ for $i=2,\cdots,n-1$. Consider $\omega_f=\frac{\iu}{2}\pbp\log(1+|z|^2+|F|^2)(\varphi^{-1})^*\omega$. Using formula \eqref{sceq01}, we can obtain on $\{z'=0\}$
\begin{equation}\begin{aligned}
n(n+1)-\frac{1}{4}R(\omega_f)=&\frac{1+|z^n|^2}{(1+|v_1|^2|z^n|^2)^3}\bigg[2|v_1|^2(1+|v_1|^2|z^n|^2)(1+|z^n|^2)+|\ell_{11}(z^n)|^2\\
&+2(1+|v_1|^2|z^n|^2)\sum_{i=2}^{n-1}|\ell_{1i}(z^n)|^2+(1+|v_1|^2|z^n|^2)\sum_{i,j=2}^{n-1}|\ell_{ij}(z^n)|^2\bigg],
\end{aligned}\end{equation}
where $\ell_{ij}(z^n)=a_{ij}z^n+b_{ij}$ for $i,j=2,\cdots,n$.
At the same time, we have on $\{z'=0\}$
\begin{equation}
n(n+1)-\frac{1}{4}R(\omega_f)=\frac{\mu_{min}+\mu_{max}|z^n|^2}{1+|z^n|^2}.
\end{equation}
Noting that each $\ell_{ij}(z^n)$ is a linear function of $z^n$, we deduce that
\begin{equation}
|v_1|^2=1,\qquad a_{11}=b_{11}= 0, \qquad b_{ij}=0,\quad\forall i,j=2,\cdots,n-1.
\end{equation}
and
\begin{equation}
0=\sum_{i=2}^{n-1}a_{1i}\overline{b_{1i}},\qquad
\mu_{min}=2+2\sum_{i=2}^{n-1}|b_{1i}|^2,\qquad
\mu_{max}=2+2\sum_{i=2}^{n-1}|a_{1i}|^2.
\end{equation}
Since $\mu_{max}>\mu_{min}$, we must have
\begin{equation}
\sum_{i=2}^{n-1}|a_{1i}|^2>\sum_{i=2}^{n-1}|b_{1i}|^2\geq 0.
\end{equation}
By (ii), $a_{1i}\neq 0$ holds only if $b^i=0$ and $c=b^n$. Therefore we have $n\geq 3$ and $b^1=b^2=0$. Consequently $Q^*AQ-\mu_{min}I_{N+1}=\mathrm{diag}(0,0,0,b^3,\cdots,b^n,b^n)$ and $\dim E_{min}\geq3$.

Here we need to point out that if we start with $p\in Z_{max}$, then we will get $b_{1i}\neq0$ for some $i=2,\cdots,n-1$, which still implies $\dim E_{min}\geq 3$ rather than $\dim E_{max}\geq 3$.

\smallskip

By the argument above, if $R(\omega)$ is non-constant, we have $n\geq 3$ and
\begin{equation}
R(\omega)\leq 4(n(n+1)-\mu_{min})\leq 4n(n+1)-8.
\end{equation}
This concludes the proof.
\end{proof}

\end{document}